\newcommand{\blue}[1]{#1}
\newtheorem{theorem}{Theorem}
\newtheorem{lemma}{Lemma}
\newtheorem{proposition}{Proposition}
\newtheorem{definition}{Definition}
\newtheorem{example}{Example}
\newtheorem{remark}{Remark}
\newcommand{\R}{\mathbb{R}}
\newcommand{\N}{\mathbb{N}}
\newcommand{\X}{\mathbb{X}}
\newcommand{\Y}{\mathbb{Y}}
\newcommand{\Z}{\mathbb{Z}}
\renewcommand{\S}{\mathbb{S}}
\newcommand{\cA}{\mathcal{A}}
\newcommand{\cC}{\mathcal{C}}
\newcommand{\cD}{\mathcal{D}}
\newcommand{\cE}{\mathcal{E}}
\newcommand{\hcE}{\widehat{\mathcal{E}}}
\newcommand{\cF}{\mathcal{F}}
\newcommand{\cH}{\mathcal{H}}
\newcommand{\cI}{\mathcal{I}}
\newcommand{\cJ}{\mathcal{J}}
\newcommand{\cL}{\mathcal{L}}
\newcommand{\cM}{\mathcal{M}}
\newcommand{\cN}{\mathcal{N}}
\newcommand{\cO}{\mathcal{O}}
\newcommand{\cR}{\mathcal{R}}
\newcommand{\cT}{\mathcal{T}}
\newcommand{\cU}{\mathcal{U}}
\def\bDelta{\mbox{\boldmath $\Delta$}}
\newcommand{\inprod}[2]{\left\langle #1, #2 \right\rangle}
\newcommand{\norm}[1]{\left\lVert #1 \right\rVert}
\newcommand{\abs}[1]{\left\lvert #1 \right\rvert}
\newcommand{\st}{\mathrm{s.t.}}
\title{A Squared Smoothing Newton Method for Semidefinite Programming}
\author{Ling Liang \thanks{Department of Mathematics, University of Maryland at College Park, MD 20742, USA, {liang.ling@u.nus.edu}.}, Defeng Sun \thanks{Department of Applied Mathematics, The Hong Kong Polytechnic University, Hung Hom, Hong Kong, \\ {defeng.sun@polyu.edu.hk}.}, Kim-Chuan Toh \thanks{Department of Mathematics and Institute of Operations Research and Analytics, National University of Singapore, Singapore 119076, {mattohkc@nus.edu.sg}.}}
\date{}
\begin{document}
\maketitle

\begin{abstract}
	This paper proposes a squared smoothing Newton method via the Huber smoothing function for solving semidefinite programming problems (SDPs). We first study the fundamental properties of the matrix-valued mapping defined upon the Huber function. Using these results and existing ones in the literature, we then conduct rigorous convergence analysis and establish convergence properties for the proposed algorithm. In particular, we show that the proposed method is well-defined and admits global convergence. Moreover, under suitable regularity conditions, i.e., the primal and dual constraint nondegenerate conditions, the proposed method is shown to have a superlinear convergence rate. To evaluate the practical performance of the algorithm, we conduct extensive numerical experiments for solving various classes of SDPs. Comparison with the state-of-the-art SDP solvers demonstrates that our method is also efficient for computing accurate solutions of SDPs.
\end{abstract}%

\textbf{Keyword:} {Semidefinite programming; Smoothing Newton method; Huber function; Nondegeneracy}

\textbf{MSC2000:} {90C06, 90C22, 90C25}

%% introduction
\section{Introduction}
\label{intro}

{A standard} primal linear semidefinite programming (SDP) problem {is} the problem of minimizing a linear function in the space $\S^n$ subject to $ m$ linear equality constraints and the essential positive semidefinite constraint $X\in \S_+^n$. Mathematically, the primal SDP has its standard form:
\begin{equation}
	\label{eq-sdp-p}
	\min_{X\in \S^n}\; \inprod{C}{X} \quad\st\quad \cA X = b,\; X \in \S_+^n,
\end{equation}
where $C\in \S^n$, $b\in \R^m$ are given data, and $\cA:\S^n\rightarrow \R^m$ is a linear mapping {given by} $\cA X := \begin{pmatrix}
	\inprod{A_1}{X} &
	\dots &
	\inprod{A_m}{X}
\end{pmatrix}^T$ for all $X\in \S^n$,
%\[
%\cA X := \begin{pmatrix}
%	\inprod{A_1}{X} &
%	\dots &
%	\inprod{A_m}{X}
%\end{pmatrix}^T,\quad \forall X\in \S^n,
%\]
with given matrices $A_i \in \S^n$, for $i = 1,\dots, m$, and $\inprod{\cdot}{\cdot}$ denoting the standard trace inner product. Note that the adjoint of $\cA$, denoted by $\cA^*$, is a linear mapping from $\R^m$ to $\S^n$ defined as $\cA^*y := \sum_{i = 1}^m y_iA_i$ for $y\in \R^m$.
%\[
%\cA^*y = \sum_{i = 1}^m y_iA_i,\quad \forall \; y\in \R^m.
%\]
Associated with problem \eqref{eq-sdp-p}, the Lagrangian dual {problem} of \eqref{eq-sdp-p} {is given by}
\begin{equation}
	\label{eq-sdp-d}
	\max_{y\in \R^m, Z\in \S^n}\; \inprod{b}{y}\quad \st\quad \cA^* y + Z = C, \; Z\in \S_+^n.	
\end{equation}

In this paper,  we assume that the primal problem \eqref{eq-sdp-p} admits at least one optimal solution and satisfies the Slater's condition, i.e., there exists $\tilde X \in \S^n_{++}$ such that $\cA \tilde X = b$. Under these assumptions, the dual problem \eqref{eq-sdp-d} {has an optimal solution and the dual optimal value is equal to the primal optimal value \footnote{Note, however, that the strong duality may not hold for \eqref{eq-sdp-p}-\eqref{eq-sdp-d} in general (see e.g., \cite{vandenberghe1996semidefinite}).}.} Thus, the duality gap between \eqref{eq-sdp-p} and \eqref{eq-sdp-d} is {zero}. As a consequence, the following system of KKT optimality conditions, given as
\begin{equation}\label{eq-KKT-0}
	\cA X = b, \quad \cA^*y + Z = C,\quad X\in \S_+^n,\; Z\in \S_+^n,\; \inprod{X}{Z} = 0,
\end{equation}
admits a solution. We call an arbitrary triple $(\bar X, \bar y, \bar Z)\in \S^n\times \R^m \times \S^n$ a KKT point if it satisfies the KKT conditions in \eqref{eq-KKT-0}. Let $\bar X$ be an optimal solution to the primal problem \eqref{eq-sdp-p}, for simplicity, we denote the set of {associated}  Lagrangian multipliers $\cM(\bar X)$ as
\begin{equation}\label{eq-set-mult}
	\cM(\bar X) := \left\{(y, Z)\in \R^m\times \S^n\;:\; (\bar X, y, Z) \textrm{ is a KKT point} \right\}.
\end{equation}
Then, under the aforementioned conditions, $\cM(\bar X)$ is a nonempty set.

The research on SDPs has been active for decades and still receives {strong} attention to date. Indeed, SDP has become one of the fundamental modeling and optimization tools {which} encompasses a wide range of applications in different fields. The increasing interest in SDP has resulted in fruitful and impressive works in the literature. For theoretical developments and many important applications of SDP in engineering, finance, optimal control, statistics, machine learning, combinatorics, and beyond, we refer the reader to \cite{bonnans2013perturbation, chan2008constraint, majumdar2020recent, sun2006strong, vandenberghe1996semidefinite, wolkowicz2012handbook}, to mention just a few. In the next few paragraphs, we will briefly review some influential works {on} developing efficient solution methods, including interior point methods (IPMs), first-order methods (FOMs), augmented Lagrangian methods (ALMs), and smoothing Newton methods, for solving SDPs.

Perhaps the most notable and reliable algorithms for solving SDPs are based on the {IPMs}, see \cite{todd2001semidefinite, wolkowicz2012handbook} for comprehensive surveys \footnote{See also the benchmark website \url{https://plato.asu.edu/ftp/sparse_sdp.html} for state-of-the-art IPM-based SDP solvers.}. IPMs admit polynomial worst-case complexity and {are able to deliver} highly accurate solutions. The essential idea of a {primal-dual} IPM is to solve the KKT conditions in \eqref{eq-KKT-0} indirectly by solving a sequence of perturbed KKT conditions:
\[
\cA X = b, \quad \cA^* y + Z = C,\quad XZ = \mu I,\quad \mu > 0,
\]
with $\mu$ being driven to zero. At each iteration of the IPM, one usually needs to solve a linear system whose coefficient matrix (i.e., the Schur complement matrix) is of {size} ${m\times m}$ and generally fully dense {and ill-conditioned}. Thus, IPMs may not be suitable for some practical applications, since SDPs arising from such applications are typically of large sizes. To alleviate these issues, many modifications and extensions of IPMs have been developed. Two main approaches are: (a) investigating iterative linear system solvers for computing the search directions \cite{toh2004solving,toh2002solving} in order to handle large-scale linear systems; (b) exploiting the underlying sparsity structures (such as the chordal structure of the aggregate sparsity pattern) in the SDP data for more efficiency \cite{kim2011exploiting, kovcvara2021decomposition}. For (a), we note that iterative linear system solvers (such as the preconditioned conjugate gradient (PCG) method) require carefully designed preconditioners to handle the issue of ill-conditioning. However, finding an effective preconditioner is still a challenging task. For (b), we note that not all SDPs admit appealing sparsity structures, and hence the range of applications for this approach remains limited. In summary, there exist several critical issues that may stop {one from applying an} {IPM} and its variants for solving large-scale \footnote{\blue{In the sense that $n$ is several thousands and $m$ is several hundreds of thousands.}} SDPs in real-world applications.

FOMs are at the forefront {on} developing scalable algorithms for solving large-scale optimization problems due to the low per-iteration complexity. For SDPs, we mention some {relevant} FOMs to capture the picture {on} recent developments along this direction. In the early \blue{21st} century, \cite{helmberg2000spectral} applied a spectral bundle method {to solve} a special regularized problem for the dual SDP \eqref{eq-sdp-d}. The authors proved the convergence of the proposed method under the condition that the trace of any primal optimal solution is fixed. FOMs such as the (accelerated) proximal gradient method are also popular; See for instance the {works} \cite{jiang2012inexact,yang2021stride} together with some numerical evaluations of their practical performance. Later,  \cite{renegar2014efficient} proposed an FOM which transforms the SDP into an equivalent convex optimization problem. But no numerical {experiment was} conducted in this work, and hence the practical performance of the proposed algorithm is unclear. Meanwhile, operator splitting methods and the alternating direction method of multipliers (ADMM) and their variants \cite{povh2006boundarypoint, sun2015convergent, zhao2010newton, zheng2020chordal} have been demonstrated to be well suited for solving large-scale SDPs, {although high accuracy solutions may not be achievable in general.} More recently, in order to design storage-efficient algorithms, \cite{yurtsever2021scalable} proposed a conditional-gradient-based augmented Lagrangian method equipped with random sketching techniques for solving SDPs with {fixed} trace constraints. {The method} has state-of-the-art performance in memory consumption, but whether it is also efficient in computational time for obtaining {high accuracy} solutions of general SDPs (with or without {fixed} trace constraints) {requires} further investigations. Another similar storage-optimal framework can be found in \cite{ding2021optimal}. {For sparse SDPs, \cite{jiang2022bregman} recently proposed a Bregman distance-based Chambolle–Pock primal–dual algorithm (note that the classical Chambolle-Pock algorithm is an equivalent form of the so-called linearized ADMM \cite{chambolle2011first}) with provable convergence properties which exploits data sparsity to reduce the computational cost for evaluating the Bregman proximal operator.} Indeed, those existing works have shown that FOMs are scalable algorithms.  However, a commonly accepted fact for FOMs is that they may not have favorable efficiency for computing accurate solutions {or they may even fail to deliver moderately accurate solutions}. Therefore, when {high quality} solutions are {needed}, FOMs may not be attractive.

Being studied for decades, augmented Lagrangian methods (ALMs) have been shown to be suitable for solving large-scale optimization problems (including SDPs) efficiently and accurately. Many efficient algorithms based on the ALM have been proposed. For example,  \cite{jarre2008augmented} developed an augmented primal-dual method. Later, \cite{malick2009regularization} proposed a Moreau-Yosida regularization method for the primal SDP \eqref{eq-sdp-p}. Both methods perform reasonably well on some SDPs with a relatively large number of linear constraints.  \cite{zhao2010newton} {developed} a dual-based ALM (i.e. the ALM is applied to the dual problem \eqref{eq-sdp-d}) whose {design principle is} fundamentally different from the one in \cite{malick2009regularization} by {relying on} the deep connection between the primal proximal point algorithm and the dual-based ALM \cite{rockafellar1976augmented,rockafellar1976monotone}, {and the highly efficient semi-smooth Newton method \cite{qi1993nonsmooth}}. The solver developed in \cite{zhao2010newton} has shown very promising practical performance for a large {collection} of SDPs compared with existing ones. Moreover, practical experience shows that when a good initial point is available, the performance of the algorithm can be further improved. This has motivated the same group of researchers to develop a hybrid framework, namely {\tt SDPNAL+} \cite{yang2015sdpnal}, which combines the dual-based ALM \cite{zhao2010newton} and the ADMM method \cite{sun2015convergent} with some majorization techniques.  {\tt SDPNAL+} has demonstrated excellent numerical performance, particularly when solving SDPs with large number of constraints (say more than 20,000), and it can handle additional polyhedral constraints. Lastly, we mention another popular ALM-based approach for solving low-rank SDPs, namely, the Burer-Monteiro (BM) method \cite{burer2003nonlinear}. The key ingredient of the BM method is to replace the essential constraint $X\in \S_+^n$ with the low-rank factorization $X = RR^T$ {which reformulates \eqref{eq-sdp-p} as a nonconvex optimization problem in $ \R^{n\times r} $, where $ r $ is a prior bound on the rank of an optimal solution}. \blue{Following the same research theme of \cite{burer2003nonlinear}, recent works \cite{wang2023decomposition, wang2023solving} demonstrate that one can expect more promising practical performance for solving low-rank SDPs via incorporating the matrix factorization technique with manifold optimization. Moreover, many} important works on the connection between the factorized problem and the primal SDP \eqref{eq-sdp-p} have also been published; see for instance \cite{boumal2016non, burer2005local, pataki1998rank}. However, choosing a suitable $ r $ to balance a conservative (large) and an aggressive (small) choice is a delicate task as the former will lead to higher computational cost per iteration while the latter may lead to convergence failure. As a consequence, the practical performance of the BM method is sensitive to the choice of $ r $.

Different from IPMs, one could also consider solving the nonsmooth reformulation of the KKT conditions in \eqref{eq-KKT-0}:
\begin{equation}
	\label{eq-KKT}
	\cF(X,y,Z) :=
	\begin{pmatrix}
		\cA X - b \\
		-\cA^* y - Z + C \\
		X - \Pi_{\S_+^n}(X-Z)
	\end{pmatrix} = 0,\quad (X,y, Z)\in \S^n\times \R^m\times \S^n,	
\end{equation}
where {the} conditions $X\in \S_+^n,\; Z\in \S_+^n,\; \inprod{X}{Z} = 0$ are replaced by a single nonsmooth equation $X - \Pi_{\S_+^n}(X-Z) = 0$ and $\Pi_{\S_+^n}(\cdot)$ is the projection operator onto $\S_+^n$. Obviously, the function $\cF(\cdot)$ is nonsmooth due to the nonsmoothness of $\Pi_{\S_+^n}(\cdot)$. Thus, the {classical} Jacobian of $\cF(\cdot)$ is not well-defined and the classical Newton method is not applicable for solving \eqref{eq-KKT}.  Since the {B-subdifferential} and the Clarke's generalized Jacobian \cite{clarke1990optimization} of $\Pi_{\S_+^n}(\cdot)$, namely, $ \partial_B \Pi_{\S_+^n} $ and $ \partial \Pi_{\S_+^n} $, are well-defined, and $\Pi_{\S_+^n}(\cdot)$ is strongly semi-smooth \cite{sun2002semismooth}, $ \partial_B\cF $ and $ \partial \cF $ are both well-defined, and $ \cF(\cdot) $ is also strongly semismooth. Then, it is natural to apply the semi-smooth Newton (SSN) method \cite{qi1993nonsmooth} that has a {fast local} convergence rate under suitable regularity conditions. However, the global convergence of the SSN method {needs} a valid merit function {for which} the line search procedure for computing a step size is well-defined. Typically, such a valid merit function should be {continuously} differentiable or satisfy other stronger conditions. Yet, defining such a merit function from $\cF(\cdot)$ directly turns out to be {challenging}. This motivates us to {develop a smoothing Newton method since a suitable merit function can readily be obtained, and its} global convergence can be established. Moreover, the smoothing Newton method inherits the strong local convergence properties from the classical Newton method and the semismooth Newton method, under similar regularity conditions.

Existing smoothing Newton methods have been developed and studied extensively in different areas. To mention just a few of these works, the reader is referred to \cite{kanzow1999jacobian, chen1998global, qi1995globally, gao2010calibrating, qi2000new, chan2008constraint, chen2003analysis, chen2003non}. These smoothing methods can be roughly divided into two groups, Jacobian smoothing Newton methods \cite{kanzow1999jacobian,chen1998global} and squared smoothing Newton methods \cite{gao2010calibrating, qi2000new, chan2008constraint}. The convergence analysis of Jacobian smoothing Newton methods strongly depends on the {so-called} Jacobian consistency property and other strong conditions for the underlying smoothing functions. However, many smoothing functions, such as those functions defined via normal maps \cite{robinson1992normal}, do not satisfy these conditions. Furthermore, to get stronger {convergence} results, it is often useful to add a small perturbation term to the smoothing function. In this case, conditions ensuring those stronger results for the Jacobian smoothing Newton methods are generally not {satisfied}.

Smoothing Newton methods rely on an appropriate smoothing function for the plus function
\[
\rho(t) := \max\{0, t\},\quad t\in\R,
\]
in order to deal with the nonsmooth projector $\Pi_{\S_+^n}(\cdot)$. To the best of our knowledge, the most notable and commonly used smoothing function {is the} so-called Chen-Harker-Kanzow-Smale (CHKS) function \cite{chen1993non, kanzow1996some, smale2000algorithms}, 
\[
    \xi(\epsilon, t) = (\sqrt{t^2+4\epsilon^2}+t)/2,\quad  (\epsilon, t)\in \R\times \R ,
\]
which has been extensively studied and used for solving SDPs and semidefinite linear and nonlinear complementarity problems. For other smoothing functions whose properties have also been well-studied, the reader is referred to \cite{sun1999ncp, qi2000new} for more details. However, it is easy to see that $\xi(\epsilon, t)$ maps any negative number $t$ to a positive one when $\epsilon\neq 0$, {even though $\rho(t) =0$.} Thus, it destroys the possible sparsity structure when evaluating the Jacobian of the merit function. Hence, smoothing Newton methods based on the CHKS function would require more computational effort {when performing the matrix-vector multiplications needed in an iterative linear system solver for computing the search directions; see Section \ref{subsection-implementaion} for more detailed discussions.} To resolve this issue, we propose to use the Huber smoothing function \cite{pinar1994smoothing} which maps any negative number to zero so that the underlying sparsity structure of the plus function is inherited. To use the Huber smoothing function, we then need to study some fundamental properties, including continuity, differentiability and (strong) semismoothness of the matrix-valued mapping associated with the Huber function.

Although {much progress {has} been made in developing efficient algorithms for solving SDPs in the literature}, the efficiency and cost of these methods are {still} far from satisfactory. This motivates us to study and analyze the squared smoothing Newton method {via} the Huber smoothing function. To evaluate the efficiency, we implement the algorithm and conduct extensive numerical experiments by solving several classes of SDPs. Our theoretical analysis and numerical results show that the proposed method admits {appealing} convergence properties and {it has shown promising numerical performance} for computing {relatively} accurate solutions.  {Compared to the ALM based method in \cite{zhao2010newton} for solving an SDP problem, our smoothing Newton method has the {advantage that it can also handle a relatively large number of linear constraints and has the} {feature} that it is applied to a single nonlinear system of equations, whereas the ALM method needs to solve a sequence of subproblems for which the total number of Newton {iterations} {is usually} more than the former.} Moreover, since the practical performance of the smoothing Newton method for solving SDPs {has not been systematically evaluated} in the literature, our theoretical and numerical studies can certainly help one to gain a better understanding of this class of algorithms. {However, note that a comprehensive comparison of the practical performance of different smoothing functions is
{not the main focus of this research.}}

The rest of the paper is organized as follows. In {Section} \ref{preliminary}, we {summarize} some existing results that will be used {in later} analysis. Then, we study the continuity, differentiability and semismoothness of the {matrix-valued} mapping defined upon the Huber function in Section \ref{huber}. Using these results, we are able to analyze the correctness, global convergence and local fast convergence rate of the proposed {squared smoothing Newton} method in Section \ref{smooth-newton}. In Section \ref{numerical-exp}, we conduct numerical experiments for solving various classes of SDPs and evaluate the practical performance of the proposed method. Finally, we conclude {the} paper in Section \ref{conclusions}.

\section{Preliminary}\label{preliminary}

Let $ \Y $ and $\Z$ be any two finite dimensional real vector spaces, each equipped with {an inner} product $\inprod{\cdot}{\cdot}$ and its induced norm $ \norm{\cdot} $. \blue{Assume that $\cO \subseteq \Y$ is an open set and $\Theta:\cO\rightarrow \Z$ is a function on $\cO$. If $\Theta$ is locally Lipschitz continuous on $\mathcal{O}$, then it is Fr\'{e}chet-differentiable almost everywhere on $\cO$ \cite{rockafellar2009variational}, i.e., for almost every $y\in \mathcal{O}$,  there exist a linear operator depending on $y$, denoted as $\Theta'(y):\Y \to \Z$, such that
\[
    \lim_{\norm{\Delta y}\to 0}\frac{\norm{\Theta(y + \Delta y) - \Theta(y) - \Theta'(y)\Delta y}}{\norm{\Delta y}} = 0,
\]
and $\Theta'(y)$ is called the F-derivative of $\Theta$ at $y\in\mathcal{O}$. Hence, the B-subdifferential \cite{qi1993convergence} of $\Theta$ at $y\in \cO$, denoted by $\partial_B\Theta(y)$, is well-defined and given by
\[
\partial_B\Theta(y) := \left\{V:V = \lim_{k\rightarrow \infty} \Theta'(y^k), y^k\rightarrow y, {\Theta'(y^k)} \textrm{ is {the F-derivative} of $\Theta$ at } y^k\right\}.
\]
Moreover, Clarke's generalized Jacobian \cite{clarke1990optimization} of $\Theta$ at  $y\in \cO$ is defined as $\partial \Theta(y) := \mathrm{conv}\left\{\partial_B\Theta(y)\right\}$, where ``$ \mathrm{conv} $'' denotes the convex hull of the underlying set. From \cite{clarke1990optimization}, we know that $\partial_B\Theta(\cdot)$ is nonempty and upper semi-continuous for all $y\in\mathcal{O}$. Furthermore, if $\Theta'(y)$ exists for a certain $y\in\mathcal{O}$, then $\Theta'(y)\in \partial_B\Theta(y)$. The above statements hold true if one replaces $\partial_B\Theta(y)$ with $\partial\Theta(y)$.
}

\subsection{Properties of $\Pi_{\S_+^n}(\cdot)$}
Denote the matrix-valued mapping $\Omega_0: \R^{N}\rightarrow \S^N$ for any integer $N>0$ as
\begin{equation}
	\label{eq-first-diff-max}
	[\Omega_0(d)]_{ij} = \frac{\rho(d_i) + \rho(d_j)}{\abs{d_i}+\abs{d_j}},\quad i,j = 1,\dots, N,\quad \forall d\in \R^N,
\end{equation}
{with the convention that $0/0 := 1$}. Let the spectral decomposition of $W \in \S^n$ be
\begin{equation}\label{eq-eig-W}
	W = PDP^T,\quad D = \mathrm{diag}(d),\quad d = (d_1,\dots, d_n)^T\in \R^n,\quad  d_1\geq \dots \geq d_n,
\end{equation}
where $P\in \R^{n\times n}$ is orthogonal. For later usage, we also define {the following} three index sets: $\alpha := \{i : d_i > 0\}$, $\beta := \{i : d_i = 0\}$, and $\gamma := \{i : d_i < 0\}$,
%\[
%\alpha := \{i : d_i > 0\},\quad \beta := \{i : d_i = 0\},\quad \gamma := \{i : d_i < 0\},
%\]
{corresponding to the} positive, zero and negative eigenvalues of $W$, respectively.

It is clear that both $\partial_B \Pi_{\S_+^n}(W)$ and $\partial \Pi_{\S_+^n}(W)$ are well-defined for any $ W\in \S^n $ (since $\Pi_{\S_+^n}(\cdot)$ is {globally} Lipschitz continuous with modulus $ 1 $ on $\S^n$). We then have the following useful lemma {(see \eqref{eq-V0-VH})} on $\partial_B \Pi_{\S_+^n}(W)$ and $\partial \Pi_{\S_+^n}(W)$ whose proof can be found in \cite[Proposition 2.2]{sun2006strong}. {Here we use ``$\circ$'' to denote the Hadamard product.}
\begin{lemma}
	\label{lemma-partial-Pi}
	Suppose that $W\in \S^n$ has the spectral decomposition \eqref{eq-eig-W}. Then $V\in \partial_B \Pi_{\S_+^n}(W)$ (respectively, $\partial \Pi_{\S_+^n}(W)$) if and only if there exists $ V_{\abs{\beta}}\in \partial_B\Pi_{\S_+^{\abs{\beta}}}(0) $ (respectively, $ \partial \Pi_{\S_+^{\abs{\beta}}}(0) $) such that
	\[
	V(H) = P
	\begin{pmatrix}
		\tilde H_{\alpha\alpha} & \tilde H_{\alpha \beta} & [\Omega_0(d)]_{\alpha\gamma}\circ \tilde H_{\alpha \gamma} \\
		\tilde H_{\alpha \beta}^T & V_{\abs{\beta}}(\tilde H_{\beta\beta}) & 0 \\
		H_{\alpha \gamma}^T\circ [\Omega_0(d)]_{\alpha\gamma}^T & 0 & 0
	\end{pmatrix}
	P^T,%\quad \tilde H := P^THP, \quad  \forall H\in \S^n.
	\]
	where $\tilde H := P^THP$ for any $H\in \S^n$. In particular, $V_{\abs{\beta}}\in \partial_B\Pi_{\S_+^{\abs{\beta}}}(0)$ if and only if there exist an orthogonal matrix $U\in \R^{\abs{\beta}\times \abs{\beta}}$ and
	\[
	\Omega_{\abs{\beta}} \in \left\{\Omega \in \S^{\abs{\beta}}: \Omega = \lim_{k\rightarrow \infty} \Omega_0(z^k),\;\R^{\abs{\beta}}\ni z^k\rightarrow 0,\; z^k_1\geq \dots \geq z^k_{\abs{\beta}},\; z^k\neq 0 \right\}
	\]
	such that
	\[
	V_{\abs{\beta}}(Y) = U\left[ \Omega_{\abs{\beta}}\circ \left(U^TYU\right)\right]U^T,\quad \forall Y\in \S^{\abs{\beta}}.
	\]
\end{lemma}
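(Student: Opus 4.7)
The plan is to reduce the characterization to the F-differentiable case. Since $\Pi_{\S_+^n}(\cdot)$ is globally Lipschitz, by Rademacher's theorem it is F-differentiable almost everywhere, and a direct L\"owner-type computation shows that differentiability holds at $W$ precisely when $W$ has no zero eigenvalue (i.e.\ $\beta = \emptyset$), in which case
\begin{equation*}
    \Pi_{\S_+^n}'(W)(H) \;=\; P\bigl[\Omega_0(d)\circ (P^T H P)\bigr]P^T.
\end{equation*}
Hence every $V \in \partial_B \Pi_{\S_+^n}(W)$ arises as a limit of such operators along a sequence $W^k \to W$ of points at which $\Pi_{\S_+^n}$ is differentiable, and the task is to classify the possible limits.

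I would write $W^k = P^k D^k (P^k)^T$ with $d^k \to d$, and, after passing to a subsequence, $P^k \to \tilde P$ with $\tilde P D \tilde P^T = W$. The matrix $\tilde P$ must agree with $P$ up to an orthogonal rotation inside each eigenspace of a repeated eigenvalue of $W$; since the blocks of $\Omega_0(d)$ corresponding to nonzero repeated eigenvalues are constant (all-ones or all-zero) and hence invariant under such rotations, the only genuinely free basis ambiguity lives on the $\beta$-block, where the $\beta$-subblock of $P^T \tilde P$ is an arbitrary orthogonal $U \in \R^{\abs{\beta}\times \abs{\beta}}$. A block-by-block computation of $[\Omega_0(d^k)]_{ij}$ using $d_i \ne 0$ for $i \in \alpha\cup\gamma$ gives the limits $1$ on the $(\alpha,\alpha)$, $(\alpha,\beta)$, and $(\beta,\alpha)$ blocks, $0$ on the $(\gamma,\gamma)$, $(\gamma,\beta)$, and $(\beta,\gamma)$ blocks, and $[\Omega_0(d)]_{\alpha\gamma}$ on the $(\alpha,\gamma)$ block (with its transpose on $(\gamma,\alpha)$). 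For $i,j \in \beta$ the subvector $d^k_\beta$ tends to $0$ with $d^k_\beta \ne 0$, so the accumulation points of $\Omega_0(d^k_\beta)$ are exactly the matrices $\Omega_{\abs{\beta}}$ in the set displayed in the lemma. Pasting these block limits together with the $U$-rotation on the $\beta$-slots yields the stated block form, establishing the ``only if'' direction.

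The main obstacle will be the ``if'' direction, namely realizing every admissible $(U, \Omega_{\abs{\beta}})$ as an actual limit. Given such data, I would first select, by definition of the limit set, a sequence $z^k \to 0$ in $\R^{\abs{\beta}}$ (ordered, nonzero) along which $\Omega_0(z^k) \to \Omega_{\abs{\beta}}$. Choosing an orthogonal $\tilde P$ whose columns indexed by $\beta$ equal $P_{:,\beta}\,U$ and whose other columns coincide with those of $P$, I would define $W^k$ by inserting $z^k$ into the $\beta$-slots of the eigenvalue vector (slightly perturbing the other entries if needed to ensure distinctness, and hence F-differentiability of $\Pi_{\S_+^n}$ at $W^k$), and using $\tilde P$ as the orthogonal factor. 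A routine verification shows $W^k \to W$ and that $\Pi_{\S_+^n}'(W^k)$ converges to the claimed operator. The parallel characterization of $\partial \Pi_{\S_+^n}(W)$ then follows by taking convex hulls, reducing the $\beta\beta$-block to $\partial \Pi_{\S_+^{\abs{\beta}}}(0)$ in place of $\partial_B \Pi_{\S_+^{\abs{\beta}}}(0)$.
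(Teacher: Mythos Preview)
The paper does not give its own proof of this lemma; it simply cites \cite[Proposition 2.2]{sun2006strong}. Your outline follows the standard argument one finds in that reference: identify the points of F-differentiability of $\Pi_{\S_+^n}$ as those $W$ with no zero eigenvalue, write down the L\"owner derivative there, and then classify all possible limits of such derivatives along $W^k\to W$, isolating the only genuine degrees of freedom (the orthogonal rotation $U$ on the $\beta$-eigenspace and the accumulation point $\Omega_{|\beta|}$ of $\Omega_0(d^k_\beta)$). The block-limit computations and the invariance argument for rotations inside eigenspaces of repeated nonzero eigenvalues are correct.

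One small imprecision: in the ``if'' direction you write that the perturbation is needed ``to ensure distinctness, and hence F-differentiability''. F-differentiability of $\Pi_{\S_+^n}$ at $W^k$ does not require distinct eigenvalues, only that no eigenvalue vanish. The issue is rather that the condition $z^k\neq 0$ in the limit set is on the vector, not componentwise, so some coordinates of $z^k$ could be zero; a tiny perturbation of those coordinates (not of the $\alpha\cup\gamma$ entries) is what is needed to keep $W^k$ in the differentiability set while leaving $\Omega_0(z^k)$ essentially unchanged. With that adjustment the construction goes through, and the $\partial\Pi_{\S_+^n}(W)$ statement follows by convex hulls exactly as you say, since the non-$\beta\beta$ blocks of $V$ are independent of $(U,\Omega_{|\beta|})$.
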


Another critical and fundamental concept of $\Pi_{\S_+^n}(\cdot)$ is its semismoothness, which plays an essential role in analyzing the convergence of Newton-type algorithms. Recall that a mapping $\Theta:\Y\rightarrow \Z$ is said to be directionally differentiable at the point $y\in \Y$ if the limit, defined by
\[
\Theta(y; h):=\lim_{t\downarrow0} \frac{\Theta(y + th) - \Theta(y)}{t},
\]
exists for any $h\in \Y$. Then the definition of the semismoothness is given as follows.

\begin{definition}
	\label{def-semismooth}
	Let $\Theta:\cO\subseteq \Y\rightarrow \Z$ be a locally Lipschitz continuous function. $\Theta$ is said to be semismooth at $y\in \cO$ if $\Theta$ is directionally differentiable at $y$ and for any $V\in \partial \Theta(y+h)$,
	\[
	\norm{\Theta(y + h) - \Theta(y) - Vh} = o(\norm{h}),\quad h\rightarrow 0.
	\]
	$\Theta$ is said to be strongly semismooth at $y\in \Y$ if $\Theta$ is semismooth {at $y$} and for any $V\in \partial \Theta(y+h)$
	\[
	\norm{\Theta(y + h) - \Theta(y) - Vh} = O(\norm{h}^2),\quad h\rightarrow 0.
	\]
	We say {that} $\Theta$ is semismooth (respectively, strongly semismooth) if $\Theta$ is semismooth (respectively, strongly semismooth) at every $y\in \Y$.
\end{definition}
Semismoothness was originally introduced in \cite{mifflin1977semismooth} for functionals. \cite{qi1993nonsmooth} extended it {to vector-valued} functions. It is also well-known that the projector $\Pi_{\S_+^n}(\cdot)$ is strongly semismooth on $\S^n$ \cite{sun2002semismooth}. Later, we will also show that the smoothed counterpart of $\Pi_{\S_+^n}(\cdot)$ is also strongly semismooth {on $ \S^n $} (see Proposition \ref{prop-huber-properties}).

\subsection{Equivalent conditions}
For the rest of this section, we describe several important conditions related to {the theoretical analysis for} SDPs and present {some connections} between these conditions. The presented materials are borrowed directly from the literature; see for instance \cite{chan2008constraint} and the references therein.

First, we recall the general concept of constraint nondegeneracy. Let $g:\Y \rightarrow \Z$ be a continuously differentiable function and $\cC$ be a {nonempty closed} convex set in $\Z$. Consider the following {feasibility} problem:
\begin{equation}\label{eq-feasibility-prob}
	g(\xi) \in \cC,\quad \xi\in \Y.
\end{equation}
Let $\bar \xi \in \Y$ be a feasible solution to the above {feasibility} problem. The tangent cone of $\cC$  at the point $g(\bar \xi )$ is denoted by $\cT_\cC(g(\bar \xi ))$, and the largest linear subspace of $\Y$ contained in $\cT_\cC(g(\bar \xi ))$, (i.e., the linearity space of $\cT_\cC(g(\bar \xi ))$) is denoted by $\mathrm{lin}(\cT_\cC(g(\bar \xi )))$.
\begin{definition}\label{def-non}
	A feasible solution $\bar \xi$ for \eqref{eq-feasibility-prob} is constraint nondegenerate if
	\[
	g'(\bar \xi) \Y + \mathrm{lin}(\cT_\cC(g(\bar \xi))) = \Z.
	\]
\end{definition}
The concept of degeneracy was introduced by Robinson \cite{robinson1983local, robinson1984local, robinson1987local} and the {term} ``constraint nondegeneracy'' was also coined by Robinson \cite{robinson2003constraint}. For nonlinear programming (i.e., $\Y$ is the Euclidean space $\R^m$ and $\cC = \{0\}^{m_1}\times \R_+^{m_2}$ with $m = m_1+m_2$), the constraint {nondegeneracy} condition is equivalent to the {well known} linear independence constraint qualification (LICQ) \cite{robinson1984local}.

Let $\cI$ denote the identity map on $\S^n$. Applying Definition \ref{def-non}, we see that the primal constraint nondegeneracy holds at a feasible solution $\bar X\in \S_+^n$ {of} the primal problem \eqref{eq-sdp-p} if
\begin{equation}\label{eq-def-primal-non}
	\begin{pmatrix}
		\cA \\ \cI
	\end{pmatrix} \S^n +
	\begin{pmatrix}
		\{0\} \\
		\mathrm{lin}(\cT_{\S_+^n}(\bar X))
	\end{pmatrix} =
	\begin{pmatrix}
		\R^m \\ \S^n
	\end{pmatrix}.
\end{equation}
Similarly, the dual constraint nondegeneracy holds at a feasible solution $(\bar y, \bar Z)\in \R^n \times \S_+^n$ {of} the dual problem \eqref{eq-sdp-d} if
\begin{equation}\label{eq-def-dual-non}
	\begin{pmatrix}
		\cA^* & \cI \\ 0 & \cI
	\end{pmatrix}
	\begin{pmatrix}
		\R^m \\ \S^n
	\end{pmatrix} +
	\begin{pmatrix}
		\{0\} \\ \mathrm{lin}(\cT_{\S_+^n}(\bar Z))
	\end{pmatrix} =
	\begin{pmatrix}
		\S^n \\ \S^n
	\end{pmatrix}.
\end{equation}
Without much difficulty, one can show that the primal constraint nondegeneracy \eqref{eq-def-primal-non} is equivalent to $ \cA \mathrm{lin}(\cT_{\S_+^n}(\bar X)) = \R^m $, and the dual constraint nondegeneracy \eqref{eq-def-dual-non} is equivalent to $ \cA^*\R^m + \mathrm{lin}(\cT_{\S_+^n}(\bar Z))  = \S^n $. Moreover, {if} $\bar X$ {is} an optimal solution for \eqref{eq-sdp-p}, {then} under the primal constraint {nondegeneracy} condition, we can show that $\cM(\bar X)$ is a singleton {\cite[Theorem 2]{alizadeh1997complementarity}}.

It is shown in \cite[Theorem 18]{chan2008constraint} that the primal and dual constraint {nondegeneracy} conditions are both necessary and sufficient conditions for the nonsingularity of elements in {both} $\partial_B \cF(\bar X, \bar y, \bar Z)$ and $\partial \cF(\bar X, \bar y, \bar Z)$, where $ (\bar X, \bar y, \bar Z) $ is a KKT point. {Specifically, we have the following theorem.}
\begin{theorem}[\cite{chan2008constraint}]
	\label{thm-pd-nonsingular}
	Let $(\bar X, \bar y, \bar Z) \in \S^n \times \R^m \times \S^n$ be a KKT point. Then the following statements are equivalent:
	\begin{enumerate}
		\item The primal constraint {nondegeneracy} condition holds at $\bar X$ and the dual constraint {nondegeneracy} condition holds at $(\bar y,\bar Z)$, respectively.
		\item Every element in $\partial \cF(\bar X, \bar y, \bar Z)$ is nonsingular.
		\item Every element in $\partial_B \cF(\bar X, \bar y, \bar Z)$ is nonsingular.
	\end{enumerate}
\end{theorem}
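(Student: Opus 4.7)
The starting observation is that $\bar X$ and $\bar Z$ commute, since both are PSD with $\inprod{\bar X}{\bar Z} = 0$. Hence they admit a simultaneous spectral decomposition: there exist an orthogonal $P$ and a partition $\{1, \ldots, n\} = \alpha \cup \beta \cup \gamma$ such that $P^T \bar X P$ has positive eigenvalues on the $\alpha$-block, $P^T \bar Z P$ has positive eigenvalues on the $\gamma$-block, and both vanish on the $\beta$-block. Consequently $W := \bar X - \bar Z$ has spectral decomposition exactly of the form \eqref{eq-eig-W} relative to these index sets, so Lemma \ref{lemma-partial-Pi} describes both $\partial \Pi_{\S_+^n}(W)$ and $\partial_B \Pi_{\S_+^n}(W)$. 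Any $V \in \partial \cF(\bar X, \bar y, \bar Z)$ acts as
\[
V(\Delta X, \Delta y, \Delta Z) = \bigl(\cA \Delta X,\; -\cA^*\Delta y - \Delta Z,\; \Delta X - U(\Delta X - \Delta Z)\bigr)
\]
for some $U \in \partial \Pi_{\S_+^n}(W)$, with the analogous statement holding for $\partial_B \cF$. I would also invoke the two equivalent reformulations already noted in the paper: primal nondegeneracy is equivalent to $\cA\, \mathrm{lin}(\cT_{\S_+^n}(\bar X)) = \R^m$, and dual nondegeneracy is equivalent to $\cA^*\R^m + \mathrm{lin}(\cT_{\S_+^n}(\bar Z)) = \S^n$.

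For (1) $\Rightarrow$ (2), assume both nondegeneracy conditions and suppose $V(\Delta X, \Delta y, \Delta Z) = 0$. The first two component equations give $\cA\Delta X = 0$ and $\Delta Z = -\cA^*\Delta y$, hence $\inprod{\Delta X}{\Delta Z} = 0$. Writing $\widetilde H := P^T H P$ and expanding the third equation via Lemma \ref{lemma-partial-Pi}, the blocks of $\Delta X = U(\Delta X - \Delta Z)$ split cleanly: the $\alpha\alpha$ and $\alpha\beta$ blocks force $\widetilde{\Delta Z}_{\alpha\alpha} = 0$ and $\widetilde{\Delta Z}_{\alpha\beta} = 0$; the $\gamma\gamma$ and $\beta\gamma$ blocks force $\widetilde{\Delta X}_{\gamma\gamma} = 0$ and $\widetilde{\Delta X}_{\beta\gamma} = 0$; and the $\alpha\gamma$ block becomes $(1-\Omega_{\alpha\gamma})\circ \widetilde{\Delta X}_{\alpha\gamma} + \Omega_{\alpha\gamma}\circ \widetilde{\Delta Z}_{\alpha\gamma} = 0$ with each $[\Omega_0(d)]_{ij} = d_i/(d_i - d_j) \in (0,1)$. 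Pairing the third equation with $\Delta Z$ and using self-adjointness together with $\inprod{\Delta X}{\Delta Z} = 0$ forces the $\alpha\gamma$ and $\beta\beta$ cross-terms to vanish (exploiting that both $V_{|\beta|}$ and $I - V_{|\beta|}$ are self-adjoint and PSD on $\S^{|\beta|}$), yielding $\Delta X \in \mathrm{lin}(\cT_{\S_+^n}(\bar X))$ and $\Delta Z \in \mathrm{lin}(\cT_{\S_+^n}(\bar Z))$. Primal nondegeneracy applied to $\cA\Delta X = 0$ then gives $\Delta X = 0$, and dual nondegeneracy applied to $\Delta Z = -\cA^*\Delta y \in \mathrm{lin}(\cT_{\S_+^n}(\bar Z))$ gives $\Delta y = 0$ and $\Delta Z = 0$.

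The inclusion $\partial_B \cF \subseteq \partial \cF$ makes (2) $\Rightarrow$ (3) immediate. The heart of the theorem is (3) $\Rightarrow$ (1), which I would prove contrapositively. If primal nondegeneracy fails, pick $0 \neq \Delta y$ with $\cA^*\Delta y$ orthogonal to $\mathrm{lin}(\cT_{\S_+^n}(\bar X))$, and set $\Delta X = 0$, $\Delta Z = -\cA^*\Delta y$; then $(\Delta X, \Delta y, \Delta Z) \in \ker V$ will hold provided $U(\cA^*\Delta y) = 0$. The orthogonality forces $\widetilde{\cA^*\Delta y}$ to be supported on $(\beta\cup\gamma)\times(\beta\cup\gamma)$, so by Lemma \ref{lemma-partial-Pi} the only nontrivial block of $U(\cA^*\Delta y)$ is the $\beta\beta$ block, namely $V_{|\beta|}(\widetilde{\cA^*\Delta y}_{\beta\beta})$. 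Choosing $V_{|\beta|}$ as the limit of Fr\'echet derivatives of $\Pi_{\S_+^{|\beta|}}$ along a sequence $z^k \to 0$ with every entry strictly negative yields $V_{|\beta|} = 0 \in \partial_B \Pi_{\S_+^{|\beta|}}(0)$, and Lemma \ref{lemma-partial-Pi} then certifies that the resulting $U$ lies in $\partial_B \Pi_{\S_+^n}(W)$, exhibiting a singular element of $\partial_B \cF$. A symmetric construction with strictly positive entries produces $V_{|\beta|} = I$ and handles the failure of dual nondegeneracy, where one instead takes $\Delta Z = 0$, $\Delta y = 0$, and $\Delta X \neq 0$ in $\mathrm{lin}(\cT_{\S_+^n}(\bar Z))^\perp \cap \ker \cA$. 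The main obstacle throughout is to certify that the constructed $U$ actually belongs to $\partial_B$ rather than merely to the larger $\partial$; this is precisely what the explicit limiting description in the second half of Lemma \ref{lemma-partial-Pi} is designed to allow.
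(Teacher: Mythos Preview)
The paper does not supply its own proof of Theorem~\ref{thm-pd-nonsingular}; it is quoted from \cite{chan2008constraint}. Your proposal must therefore be judged on its own merits and, for comparison, against the paper's proof of the parallel result Proposition~\ref{prop-nonsingular-cE}.

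Your implication $(3)\Rightarrow(1)$ is correct, including the use of the limiting description in Lemma~\ref{lemma-partial-Pi} to certify that the extreme choices $V_{|\beta|}\in\{0,I\}$ produce genuine elements of $\partial_B\Pi_{\S_+^n}(W)$.

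Your implication $(1)\Rightarrow(2)$ contains a real error. After the inner-product argument you assert ``$\Delta X \in \mathrm{lin}(\cT_{\S_+^n}(\bar X))$'' and then invoke \emph{primal} nondegeneracy to force $\Delta X=0$ from $\cA\Delta X=0$. Both steps fail. The block analysis only yields $\widetilde{\Delta X}_{\alpha\gamma}=\widetilde{\Delta X}_{\beta\gamma}=\widetilde{\Delta X}_{\gamma\gamma}=0$, so $\Delta X$ is supported on $(\alpha\cup\beta)\times(\alpha\cup\beta)$; this is $\mathrm{lin}(\cT_{\S_+^n}(\bar Z))^{\perp}$, not $\mathrm{lin}(\cT_{\S_+^n}(\bar X))$ (the two differ exactly on the $\beta\beta$ block, which your argument does not kill). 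Moreover, primal nondegeneracy asserts \emph{surjectivity} of $\cA$ on $\mathrm{lin}(\cT_{\S_+^n}(\bar X))$, not injectivity, so it cannot deliver $\Delta X=0$. The fix is to swap the roles: \emph{dual} nondegeneracy, in its orthogonal-complement form $\ker\cA\cap\mathrm{lin}(\cT_{\S_+^n}(\bar Z))^{\perp}=\{0\}$, gives $\Delta X=0$; then \emph{primal} nondegeneracy, in the form $\cA^{*}y\in\mathrm{lin}(\cT_{\S_+^n}(\bar X))^{\perp}\Rightarrow y=0$, gives $\Delta y=0$ (since $\Delta Z=-\cA^{*}\Delta y$ is supported on $(\beta\cup\gamma)\times(\beta\cup\gamma)$). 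With this correction your argument goes through.

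For comparison, the paper's proof of the analogous direction in Proposition~\ref{prop-nonsingular-cE} takes a different route: it converts dual nondegeneracy into the strong second-order sufficient condition via Lemma~\ref{lemma-sosc-dual-non}, verifies $\Delta X\in\mathrm{app}(\bar y,\bar Z)$, and then combines Lemma~\ref{lemma-delta-ineq} with $\inprod{\Delta X}{\Delta Z}=0$ to contradict the strong SOSC unless $\Delta X=0$. Your (corrected) approach is more elementary and bypasses the second-order condition entirely; the paper's route makes the role of curvature explicit and reuses machinery already in place for the smoothed system.
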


We next present the concept {of strong} second-order sufficient condition for linear SDPs.
{We note that the concept was introduced} by \cite{sun2006strong} for general nonlinear SDPs. To this end, let $ (\bar X, \bar y, \bar Z) $ be any KKT point. Write $W = \bar X - \bar Z$, and  assume that $W$ has the spectral decomposition \eqref{eq-eig-W}. Partition $P$ {accordingly with respect to the index set $\alpha, \beta$ and $\gamma$} as $P = {\begin{pmatrix}
		P_\alpha & P_\beta & P_\gamma
\end{pmatrix}}$, {where $P_\alpha\in \R^{n\times \abs{\alpha}}$, $ P_\beta\in \R^{n\times \abs{\beta}} $, and $ P_\gamma\in \R^{n\times \abs{\gamma}} $}. Then, by the fact that $\bar X$ and $\bar Z$ commute, it holds that
\[
\bar X =  P_{\alpha} \mathrm{diag}(d_{1},\dots, d_{\abs{\alpha}}) P_{\alpha}^T,\quad \bar Z = -P_{\gamma} \mathrm{diag}(d_{1+\abs{\alpha}+\abs{\beta}}, \dots, d_{n}) P_{\gamma}^T.
\]

For any given matrix $B\in \S^n$, {let $B^\dagger$ be} the Moore-Penrose pseudoinverse of $B$. We define the linear-quadratic function $\Gamma_B:\S^n\times \S^n\rightarrow \R$ by
\begin{equation}\label{eq-Gamma-B}
	\Gamma_B(S,H) := 2\inprod{S}{HB^\dagger H},\quad \forall (S, H)\in \S^n\times \S^n.
\end{equation}
Using \blue{the above} notation, {the definition of the strong second-order sufficient condition
is given as follows.}

\begin{definition}
	\label{def-sosc}
	Let $\bar X$ be an optimal solution to the primal problem \eqref{eq-sdp-p}. We say that the strong {second-order sufficient condition} holds at $\bar X$ if $ \sup_{(y,Z)\in \cM(\bar X)} \left\{ - \Gamma_{\bar X}(-Z, H)\right\} > 0 $ for any {$ 0\neq H\in \bigcap_{(y,Z)\in \cM(\bar X)} \mathrm{app}(y,Z) $}, where $ \mathrm{app}(y,{Z}) := \left\{B\in \S^n\;:\; \cA B = 0,\; P_{\beta}^TB P_{\gamma} = 0,\; P_{\gamma}^TB P_{\gamma} = 0 \right\}$.
\end{definition}
Finally, we {present} a result that links the strong {second-order sufficient condition} and the dual constraint nondegeneracy {condition}; see \cite[Proposition 15]{chan2008constraint} for a proof.
\begin{lemma}\label{lemma-sosc-dual-non}
	Let $(\bar X, \bar y, \bar Z) \in \S^n\times \R^m \times \S^n$ be a KKT point such that $\cM(\bar X) = \left\{(\bar y, \bar Z)\right\}$. Then, the following two statements are equivalent:
	\begin{enumerate}
		\item The strong {second-order sufficient condition} holds at $\bar X$.
		\item The dual constraint {nondegeneracy} condition holds at $(\bar y, \bar Z)$.
	\end{enumerate}
\end{lemma}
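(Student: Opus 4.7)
The plan is to translate both the strong second-order sufficient condition and the dual constraint nondegeneracy condition into the \emph{same} linear-algebraic statement phrased in the eigenbasis $P = (P_\alpha\;P_\beta\;P_\gamma)$ of $W = \bar X - \bar Z$. Since $\cM(\bar X) = \{(\bar y, \bar Z)\}$, the supremum in Definition \ref{def-sosc} collapses to a single term, so the strong second-order sufficient condition becomes $-\Gamma_{\bar X}(-\bar Z, H) > 0$ for every nonzero $H \in \mathrm{app}(\bar y, \bar Z)$. Using $\bar X^\dagger = P_\alpha D_\alpha^{-1} P_\alpha^T$ with $D_\alpha \succ 0$ and $\bar Z = P_\gamma(-D_\gamma)P_\gamma^T$ with $-D_\gamma \succ 0$, a cyclic-trace computation in the basis $P$ reduces the quadratic form to
\[
-\Gamma_{\bar X}(-\bar Z, H) \;=\; 2\inprod{\bar Z}{H\bar X^\dagger H} \;=\; 2\sum_{i\in\alpha,\, j\in\gamma}\frac{-d_j}{d_i}\,\tilde H_{ij}^{\,2}, \qquad \tilde H := P^T H P,
\]
which is nonnegative and vanishes precisely when $\tilde H_{\alpha\gamma} = 0$. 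Combined with the defining constraints $\cA H = 0$, $\tilde H_{\beta\gamma} = 0$, $\tilde H_{\gamma\gamma} = 0$ of $\mathrm{app}(\bar y, \bar Z)$, the strong second-order sufficient condition is therefore equivalent to the implication
\[
\cA H = 0,\quad \tilde H_{\alpha\gamma} = \tilde H_{\beta\gamma} = \tilde H_{\gamma\gamma} = 0 \;\Longrightarrow\; H = 0.
\]

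Next, I would describe $\mathrm{lin}(\cT_{\S_+^n}(\bar Z))$ in the same basis. Because $\bar Z$ has kernel spanned by the columns of $(P_\alpha\;P_\beta)$, the standard characterization of the tangent cone of $\S_+^n$ at a PSD matrix yields
\[
\mathrm{lin}(\cT_{\S_+^n}(\bar Z)) = \{B \in \S^n : \tilde B_{\alpha\alpha} = \tilde B_{\alpha\beta} = \tilde B_{\beta\beta} = 0\}.
\]
The dual constraint nondegeneracy $\cA^*\R^m + \mathrm{lin}(\cT_{\S_+^n}(\bar Z)) = \S^n$ is equivalent, by taking orthogonal complements under the trace inner product on $\S^n$, to $\ker \cA \cap \mathrm{lin}(\cT_{\S_+^n}(\bar Z))^\perp = \{0\}$. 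A direct computation in the $P$-basis identifies the orthogonal complement as $\{H \in \S^n : \tilde H_{\alpha\gamma} = \tilde H_{\beta\gamma} = \tilde H_{\gamma\gamma} = 0\}$, so the nondegeneracy condition reduces to \emph{exactly} the same implication that was extracted from the strong second-order sufficient condition, and the equivalence between (i) and (ii) follows.

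The main obstacle is careful block-bookkeeping: one must track which blocks of $\tilde H$ survive in the quadratic form $\inprod{\bar Z}{H\bar X^\dagger H}$---only $\tilde H_{\alpha\gamma}$ does, because $\bar X^\dagger$ annihilates the $P_\beta$- and $P_\gamma$-directions while $\bar Z$ annihilates the $P_\alpha$- and $P_\beta$-directions---and then line these up with the block description of $\mathrm{lin}(\cT_{\S_+^n}(\bar Z))^\perp$ so that the three vanishing-block constraints match on both sides. Everything else (orthogonality of $P$, the tangent cone of $\S_+^n$ at a PSD matrix via its kernel, and the duality $\cA^*\R^m = (\ker\cA)^\perp$) is routine.
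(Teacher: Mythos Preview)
Your argument is correct. The paper does not give its own proof of this lemma; it simply cites \cite[Proposition 15]{chan2008constraint}. Your direct linear-algebraic reduction---computing $-\Gamma_{\bar X}(-\bar Z,H)$ in the eigenbasis of $\bar X-\bar Z$ to isolate the $\tilde H_{\alpha\gamma}$ block, identifying $\mathrm{lin}(\cT_{\S_+^n}(\bar Z))^\perp$ via the kernel of $\bar Z$, and then matching both conditions to the same implication $\ker\cA\cap\{\tilde H_{\alpha\gamma}=\tilde H_{\beta\gamma}=\tilde H_{\gamma\gamma}=0\}=\{0\}$---is exactly the standard route (and is, in essence, the proof in the cited reference). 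The block bookkeeping you flag as the only delicate point is handled correctly, including the identification $(\cA^*\R^m)^\perp=\ker\cA$ and the fact that the lineality space of $\cT_{\S_+^n}(\bar Z)$ picks out precisely the complementary blocks.
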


\section{The Huber smoothing function}
\label{huber}

Since the plus function {$\rho(t) =\max\{0, t\}$,} $t\in \R$ is not differentiable at $t = 0$, we consider its Huber smoothing (or approximation) function that is defined as follows:
\begin{equation}\label{eq-huber}
	h(\epsilon, t) = \left\{
	\begin{array}{ll}
		t - \frac{\abs{\epsilon}}{2}, & t > \abs{\epsilon}, \\
		\frac{t^2}{2\abs{\epsilon}}, & 0 \leq t \leq \abs{\epsilon}, \\
		0, & t < 0,
	\end{array}
	\right.\quad \forall (\epsilon, t) \in \R\backslash\{0\}\times \R,\quad h(0, t) = \rho(t), \quad \forall t\in \R.
\end{equation}
Clearly, $ h(\epsilon, t) $ is continuously differentiable for $\epsilon \neq 0$ and $ t\in \R $. Moreover, one can easily check that $h$ is directionally differentiable at $ (0, t) $ for any $ t\in\R $. Since $ \Pi_{\S_+^n}(W) = P {\mathrm{diag}(\rho(d_1), \dots, \rho(d_n))} P^T $, we can compute $\Pi_{\S_+^n}(W)$ approximately by evaluating the {matrix-valued mapping} $\Phi(\epsilon, W)$ that is defined as
\[
\Phi(\epsilon, W):= P\mathrm{diag}(h(\epsilon, d_1), \dots, h(\epsilon, d_n))P^T,\quad \forall (\epsilon, W)\in \R\times \S^n.
\]
In this section, we shall study {some} fundamental properties of  $\Phi$. We note that the techniques used in our analysis are not new {but} mainly borrowed from those in the literature. However, existing techniques were mainly used for analyzing the CHKS-smoothing function. We will show in this section that the same analysis framework is applicable to the Huber smoothing function {\eqref{eq-huber}}.

Before presenting our results, we need to introduce some useful notation. For any $(\epsilon, d)\in \R\backslash\{0\}\times \R^N$, where $ N>0 $ is any dimension, we define the matrix-valued mappings $\Omega : \R\times \R^N\rightarrow \S^N$ and $ \cD: \R\times \R^N\rightarrow \S^N $ as follows:
\begin{equation}
	\begin{aligned}
		\left[\Omega(\epsilon, d)\right]_{ij} := 
		\left\{
		\begin{array}{ll}
			\frac{h(\epsilon, d_i) - h(\epsilon, d_j)}{d_i - d_j} & d_i\neq d_j \\
			h'_2(\epsilon, d_i) & d_i = d_j
		\end{array}
		\right.,\quad 1\leq i,j\leq N, \quad 
		\cD(\epsilon, d) :=  \mathrm{diag}(h_1'(\epsilon, d_1), \dots, h_1'(\epsilon, d_N)).
	\end{aligned}
\label{eq-first-diff}
\end{equation}
Here $ h'_1 $ and $ h'_2 $ denote the partial derivatives with respect to the first and the second arguments of $ h $, respectively. {Note that $ 0\leq [\Omega(\epsilon,d)]_{ij} \leq 1 $, $ 1\leq i, j\leq N $, for any $ (\epsilon, d)\in \R\times \R^N $, and that $ 0 \leq \abs{h_1'(\epsilon, d_i)} \leq \frac{1}{2} $, $ 1\leq i\leq N $, for any $ \epsilon\neq 0 $.}

Let $W$ have the spectral decomposition \eqref{eq-eig-W} and $\lambda_1>\dots>\lambda_r$ be the distinct eigenvalues of $W$ with multiplicities $m_1,\dots, m_r$. Define $s_1 := 0$, $s_j := \sum_{i = 1}^{j-1}m_j$ for $j = 2,\dots,r$, and $s_{r+1} := n$. For $j = 1,\dots, r$, denote the matrices $P_j {:=} \begin{pmatrix}
	p_{s_j+1} & \dots & p_{s_{j+1}} \end{pmatrix} \in \R^{n\times m_j}$ where $p_i$ denotes the $i$-th column of $P$ for $i = 1,\dots, n$, and $ Q_j {:=} P_jP_j^T\in \S^n $. Then, it is clear that
\[
W = \sum_{j = 1}^r \lambda_jQ_j,\quad \Phi(\epsilon, W) = \sum_{j = 1}^r h(\epsilon, \lambda_j)Q_j.
\]
Now, consider $W + tH$, which admits a decomposition $ W + tH = \sum_{i = 1}^n d_i(t)p_i(t)p_i(t)^T $ {with $ d_1(t)\geq \dots \geq d_n(t) $ and $ \{p_1(t),\dots,p_n(t)\} $ forming an orthonormal basis for $ \R^n $}. Similarly, for $j = 1,\dots, r$, we can define the
 matrices $P_j(t) {:=} \begin{pmatrix}
	p_{s_j+1}(t) & \dots & p_{s_{j+1}}(t) \end{pmatrix} \in \R^{n\times m_j} $ and $ Q_j(t) {:=} P_j(t)P_j(t)^T\in \S^n $. By the definition {of directional} differential, we may denote
\[
d_i'(W;H) := \lim_{t\downarrow 0} \frac{d_i(t) - d_i}{t},\quad \forall H\in \S^n,
\]
if the limit exists, for any $i = 1,\dots, n$.

\begin{proposition}\label{prop-huber-properties}
	Given any  $ W \in \S^n$ having the spectral decomposition \eqref{eq-eig-W}. The following hold:
	\begin{enumerate}
		\item For any $ \epsilon\neq 0 $, $ \Phi $ is continuously differentiable at $ (\epsilon, W) $, and it holds that
		\begin{equation}\label{lemma-contin-diff-1}
			\Phi'(\epsilon, W)(\tau, H) 
				 =  P\left[ \Omega(\epsilon, d) \circ (P^THP) + \tau \cD(\epsilon, d)\right]P^T,\quad \forall (\tau, H)\in \R\times \S^n,
		\end{equation}
		{where $ \Omega(\epsilon, d) $, $ \cD(\epsilon, d) $ are defined in \eqref{eq-first-diff}.}
		\item $\Phi$ is locally Lipschitz continuous on $\R\times \S^n$.
		\item $ \Phi $ is directionally differentiable at $ (0,W) $, and {for any $ (\tau, H)\in \R\times \S^n $}, it holds that
		\begin{equation}\label{eq-dir-diff-1}
			\begin{aligned}
				\Phi'((0, W); (\tau, H)) 
				%			=  &\; \sum_{j = 1}^rh(0, \lambda_j)Q_j'(H) + \sum_{i = 1}^n h'((0, d_i); (\tau, d_i'(W; H)))p_ip_i^T \\
				= &\;   \frac{1}{2} \sum_{1\leq k\neq j\leq r} \frac{h(0, \lambda_k)-h(0, \lambda_j)}{\lambda_k- \lambda_j}\left(Q_jHQ_k + Q_kHQ_j\right) \\
				&\; +\sum_{i\in \alpha} \left( d_i'(W; H) - \frac{\abs{\tau}}{2}\right)p_ip_i^T + \sum_{i\in \beta} h(\tau, d_i'(W;H))p_ip_i^T,
			\end{aligned}
		\end{equation}
		where for any $1\leq i \leq n$ with $d_i = \lambda_j$ for some $1\leq j \leq r$, {the} elements of $\{d_i'(W;H): i = s_j+1,\dots, s_{j+1}\}$ are the eigenvalues of $P_j^THP_j$, arranged {in decreasing} order.
		\item $ \Phi $ is strongly semismooth {on $\R\times \S^n $}.
	\end{enumerate}
\end{proposition}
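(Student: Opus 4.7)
Parts (1) and (2) can be handled by standard results on symmetric matrix-valued spectral (Löwner) operators. For (1), the plan is to observe that for each fixed $\epsilon\neq 0$ the scalar map $t\mapsto h(\epsilon,t)$ is $C^1$ on $\R$, so by the classical Löwner/Daleckii-Krein formula the induced operator $\Phi(\epsilon,\cdot)$ is Fréchet differentiable on $\S^n$ with derivative obtained by taking the Hadamard product of the first divided-difference matrix of $h(\epsilon,\cdot)$ at the spectrum of $W$ with $P^THP$; joint $C^1$ dependence then follows because $h$ itself is jointly $C^1$ on $(\R\setminus\{0\})\times\R$, and the $\epsilon$-partial contributes the diagonal term $\tau\,\cD(\epsilon,d)$ in the eigenbasis of $W$, producing formula \eqref{lemma-contin-diff-1}. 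For (2), I would use the estimate $\abs{h(\epsilon,t)-h(\epsilon',t)}\leq \tfrac{1}{2}\abs{\epsilon-\epsilon'}$ (immediate from the definition) together with the uniform 1-Lipschitz property $\abs{h(\epsilon,s)-h(\epsilon,t)}\leq \abs{s-t}$, which, combined with the fact that symmetric spectral operators of 1-Lipschitz scalar functions are 1-Lipschitz, yields global (and in particular local) Lipschitz continuity of $\Phi$ on $\R\times\S^n$.

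For (3), the plan is to combine standard eigenvalue perturbation theory with the positive homogeneity of $h$ on the zero-eigenspace block. Given $(\tau,H)$ and $t\downarrow 0$, I would write $W+tH$ in its spectral decomposition and invoke classical results (e.g. Kato/Torki) guaranteeing that the eigenvalues admit expansions $d_i(t) = \lambda_j + t\,d_i'(W;H) + o(t)$ for $i\in\{s_j+1,\dots,s_{j+1}\}$, where $d_i'(W;H)$ are the decreasingly-ordered eigenvalues of $P_j^THP_j$, while the block projectors satisfy $Q_j(t)\to Q_j$. I then partition the difference quotient $\tfrac{1}{t}[\Phi(t\tau,W+tH) - \Phi(0,W)]$ into contributions from pairs of blocks $(j,k)$: the cross terms $j\neq k$ yield the divided-difference sums in the first line of \eqref{eq-dir-diff-1}; diagonal blocks with $\lambda_j>0$ give the term $d_i'(W;H)-\abs{\tau}/2$ since eventually $d_i(t)>\abs{t\tau}$; diagonal blocks with $\lambda_j<0$ vanish for small $t$; the critical block $\lambda_j=0$ (indices in $\beta$) uses the positive-homogeneity identity $h(t\tau,ts) = t\,h(\tau,s)$ for $t>0$, which produces the $h(\tau,d_i'(W;H))$ terms.

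For (4), the plan is to invoke the lifting principle: a symmetric spectral operator inherits strong semismoothness from its generating scalar function (see \cite{sun2002semismooth} and the analysis in \cite{chan2008constraint} for the CHKS variant). The task thus reduces to verifying that the scalar Huber function in \eqref{eq-huber} is strongly semismooth on $\R^2$, and since $h$ is defined piecewise by polynomials (linear on $\{t>\abs{\epsilon}\}$, quadratic on $\{0\leq t\leq \abs{\epsilon}\}$, identically zero on $\{t<0\}$) with continuous matching across the curves $t=\abs{\epsilon}$ and $t=0$, it is $PC^2$ and therefore strongly semismooth. Combining this with (1) and (3) through the lifting theorem establishes the strong semismoothness of $\Phi$. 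The main obstacle of the whole proposition lies in this last step at the degenerate point $\epsilon=0$: one must control the remainder $\Phi(\epsilon+\tau,W+H)-\Phi(\epsilon,W)-V(\tau,H)$ for $V\in\partial\Phi(\epsilon+\tau,W+H)$ by $O(\norm{(\tau,H)}^2)$ uniformly across all eigenvalue regimes, which requires re-doing the block decomposition of (3) at second order, exploiting the $O(\norm{H}^2)$ expansion of spectral projectors away from eigenvalue crossings and the strong semismoothness of $h$. The piecewise-polynomial nature of $h$ makes this error control cleaner than in the CHKS analysis of \cite{chan2008constraint}, whose blueprint can be followed essentially verbatim.
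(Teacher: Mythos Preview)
Your plan for parts (1), (3), and (4) is essentially the same as the paper's proof. For (1) the paper cites \cite[Proposition 4.3]{chen2003analysis}, which is the L\"owner/Daleckii--Krein formula you invoke. For (3) the paper carries out exactly the block decomposition you describe, citing \cite{shapiro2002differentiability} for the perturbation of $Q_j$ and $d_i$, and using the directional derivative of $h$ at $(0,u)$; your positive-homogeneity observation $h(t\tau,ts)=t\,h(\tau,s)$ is just a compact way of computing that directional derivative on the $\beta$-block. For (4) the paper does not invoke a black-box lifting theorem (indeed, the standard results in \cite{sun2002semismooth} are for one-variable spectral operators, so your initial appeal to a ``lifting principle'' for the two-parameter map $(\epsilon,t)\mapsto h(\epsilon,t)$ is not available off the shelf), but instead verifies directly, via \cite[Theorem 3.7]{sun2002semismooth}, the second-order remainder estimate you describe in your final paragraph, using strong semismoothness of the eigenvalue functions and analyticity of the eigenvectors. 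So your fallback plan for (4) matches the paper; just be aware that the shortcut you first propose does not exist in the literature in the required parametric form.

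The one genuine methodological difference is part (2). The paper proves local Lipschitz continuity by a mollification argument: it invokes \cite[Theorem 9.67]{rockafellar2009variational} to approximate $h$ uniformly by $C^1$ functions $h_\ell$ with uniformly bounded derivatives on a neighbourhood, passes to the induced spectral operators $\Phi_\ell$, and transfers the Lipschitz bound in the limit. Your route is more elementary and gives more: the explicit bounds $\abs{h(\epsilon,t)-h(\epsilon',t)}\leq \tfrac12\abs{\epsilon-\epsilon'}$ and $\abs{h(\epsilon,s)-h(\epsilon,t)}\leq\abs{s-t}$ combined with the nonexpansiveness of spectral operators of $1$-Lipschitz scalar functions yield a \emph{global} Lipschitz constant (with an innocuous $\sqrt n$ factor in the $\epsilon$-direction), avoiding the approximation machinery entirely. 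Either argument suffices here; yours is shorter and sharper.
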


\begin{proof}
	See Appendix \ref{proof-huber-properties}. Interested readers are referred to \cite{gao2010calibrating, zhao2004smoothing} for similar results for other popular smoothing functions. 
\end{proof}

Notice that the matrix $P_j$ is defined {up to} an orthogonal {transformation}, i.e., it can be replaced with $P_jU$, where $U\in \R^{m_j\times m_j}$ is an {arbitrary} orthogonal matrix. For any given $H\in \S^n$, consider the matrix $P_j^THP_j$ which admits the following spectral decomposition
\[
P_j^THP_j = U_j\mathrm{diag}(\mu_1, \dots, \mu_{m_j})U_j^T,\quad \mu_1\geq \dots \geq \mu_{m_j},
\]
where $U_j\in \R^{m_j\times m_j}$ is orthogonal. Then, we may replace $P_j$ by $P_jU_j$. In this way, $P_j^THP_j$ is always a diagonal matrix whose diagonal entries are arranged in decreasing order. As a consequence, we can easily verify from the third statement of Proposition \ref{prop-huber-properties} that
\begin{equation}
	\label{eq-dir-diff-new}
	\begin{aligned}
		\Phi'((0, W);(\tau, H))
		=  &\;
		P \begin{pmatrix}
			\tilde H_{\alpha\alpha} & \tilde H_{\alpha\beta} & [\Omega_0(d)]_{\alpha\gamma}\circ \tilde H_{\alpha\gamma} \\
			\tilde H_{\alpha\beta}^T & \Phi_{\abs{\beta}}(\tau, \tilde H_{\beta\beta}) & 0 \\
			[\Omega_0(d)]_{\alpha\gamma}^T \circ H_{\alpha\gamma}^T & 0 & 0
		\end{pmatrix} P^T   - \frac{\abs{\tau}}{2}\sum_{i\in\alpha}p_ip_i^T,	
	\end{aligned}
\end{equation}
$\forall(\tau, H)\in \R\times \S^n$, where $\tilde H := P^THP$ and the mapping $\Phi_{\abs{\beta}}:\R\times \S^{\abs{\beta}} \rightarrow \S^{\abs{\beta}}$ is defined by replacing the dimension $ n $ in the definition of $\Phi:\R\times \S^n\rightarrow \S^n$ with $\abs{\beta}$.

Define the mapping $ \cL: \R\times \S^n\rightarrow \S^n $ as
\[
	\cL(\tau, H):=\Phi'((0,W); (\tau, H)),\quad (\tau,H)\in \R\times \S^n.
\]
Using \eqref{eq-dir-diff-new}, we see that the mapping $\cL(\cdot,\cdot)$ is F-differentiable at $(\tau, H)$ if and only if $\tau \neq 0$. Obviously, $\cL$ is locally Lipschitz continuous everywhere. Hence, $\partial_B \cL(0,0)$ is well-defined. Then, we have the following useful result that builds an insightful connection between $\partial_B \Phi$ and $\partial_B\cL$. To prove the result, we will basically follow the proof of \cite[Lemma 4]{chan2008constraint}. However, due to the second term on the right-hand side of  \eqref{eq-dir-diff-new}, we need to {modify} the proof {to make} the paper self-contained{; see Appendix \ref{proof-partial-Gamma} for more details.}

\begin{lemma}
	\label{lemma-partial-Gamma}
	Suppose that $W\in \S^n$ has the eigenvalue decomposition \eqref{eq-eig-W}. Then, it holds that
	\[
	\partial_B\Phi(0,W) = \partial_B\cL(0, 0).
	\]
\end{lemma}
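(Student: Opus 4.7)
The plan is to prove the two inclusions $\partial_B\Phi(0,W) \subseteq \partial_B\cL(0,0)$ and $\partial_B\cL(0,0) \subseteq \partial_B\Phi(0,W)$ separately, by exploiting a rescaling argument that links a sequence of smooth points of $\Phi$ approaching $(0,W)$ to a sequence of smooth points of $\cL$ approaching $(0,0)$. By item~1 of Proposition~\ref{prop-huber-properties}, $\Phi$ is F-differentiable on $\{\epsilon\neq 0\}\times\S^n$, and inspection of \eqref{eq-dir-diff-new} (applying the same proposition to the inner block $\Phi_{\abs{\beta}}$) shows that $\cL$ is F-differentiable precisely on $\{\tau\neq 0\}\times\S^n$. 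Hence the B-subdifferentials on both sides are generated by limits of derivatives taken along sequences whose first coordinate is nonzero, which gives the two constructions a parallel structure.

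For the forward inclusion, I would take $V\in\partial_B\Phi(0,W)$ realized as $V = \lim_{k\to\infty}\Phi'(\epsilon_k,W^k)$ with $\epsilon_k\neq 0$ and $(\epsilon_k,W^k)\to(0,W)$, and write $W^k = W + t_k H^k$ with $t_k := \max\{\norm{W^k-W},\,\abs{\epsilon_k}\}\to 0^+$, so that $\norm{H^k}$ and $\tau_k := \epsilon_k/t_k$ are bounded (pass to a convergent subsequence if needed). Using the closed-form expression in \eqref{lemma-contin-diff-1} together with the fact that the eigenvalues of $W^k$ split into clusters converging to the distinct eigenvalues $\lambda_j$ of $W$ with intra-cluster separation of order $t_k$ governed by the eigenvalues of $P_j^T H^k P_j$, a Taylor expansion of the entries of $\Omega(\epsilon_k,d(W^k))$ and $\cD(\epsilon_k,d(W^k))$ from \eqref{eq-first-diff} shows that $\Phi'(\epsilon_k,W^k)$ coincides asymptotically with $\cL'(\tilde\tau_k,\tilde H^k)$ for a suitably chosen sequence $(\tilde\tau_k,\tilde H^k)\to(0,0)$ with $\tilde\tau_k\neq 0$. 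The reverse inclusion proceeds symmetrically: starting from $V = \lim_k\cL'(\tau_k,H^k)$ with $(\tau_k,H^k)\to(0,0)$ and $\tau_k\neq 0$, one sets $(\epsilon_k,W^k) := (s_k\tau_k,\,W + s_k H^k)$ for a sufficiently slowly decaying scale $s_k\to 0^+$ (small enough to make the eigenvalue structure of $W^k$ track that of $W$, but not so small that the rescaled data degenerate) and verifies $\Phi'(\epsilon_k,W^k)\to V$ by the same spectral expansion.

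The main obstacle, and the reason why \cite[Lemma 4]{chan2008constraint} cannot be cited verbatim, is the additional term $-\frac{\abs{\tau}}{2}\sum_{i\in\alpha} p_ip_i^T$ appearing on the right-hand side of \eqref{eq-dir-diff-new}, which has no analogue in the CHKS-based analysis because the CHKS smoothing function is globally smooth in $\epsilon$. Two points require care in adapting the proof. First, this term is nondifferentiable in $\tau$ at $\tau = 0$ but is smooth away from it, so when confirming that $\cL$ is F-differentiable on $\{\tau\neq 0\}$ I need to treat $\tau>0$ and $\tau<0$ as separate smooth regimes. Second, this term is the trace, in the directional limit, of the identity $h(\epsilon,d_i) = d_i - \abs{\epsilon}/2$ that holds on the regime $d_i\geq\abs{\epsilon}$, which is exactly the regime eventually reached on the positive index set $\alpha$ along the approximating sequence. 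Consequently, matching the $\abs{\epsilon_k}$-behavior of $\Phi'(\epsilon_k,W^k)$ on the $\alpha$-block with the $\abs{\tilde\tau_k}$-behavior of $\cL'(\tilde\tau_k,\tilde H^k)$ requires separating by the sign of $\epsilon_k$ (and correspondingly of $\tau_k$) along the sequence and handling each sign case symmetrically. Once this sign-splitting is in place, the remaining algebraic verifications mirror those of \cite[Lemma 4]{chan2008constraint}.
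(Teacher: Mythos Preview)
Your rescaling strategy is a genuinely different route from the paper's, and while the high-level outline is reasonable, the key step is asserted rather than proved. The paper does \emph{not} rescale. For the inclusion $\partial_B\Phi(0,W)\subseteq\partial_B\cL(0,0)$, given $(\epsilon^k,W^k)\to(0,W)$ with $\epsilon^k\neq0$, the paper diagonalises $W^k=P^kD^k(P^k)^T$, passes to a subsequence so that $P^k\to P$ and the matrices $\Omega(\epsilon^k,d^k)$, $\cD(\epsilon^k,d^k)$ converge, and then sets $Y^k:=P\,\mathrm{diag}(0,D_\beta^k,0)\,P^T$ using the \emph{same} $\epsilon^k$. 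The point is that $Y^k$ has exactly the eigenvectors $P$ of $W$ and exactly the $\beta$-block eigenvalues of $W^k$, so computing $\cL'(\epsilon^k,Y^k)$ via \eqref{eq-dir-diff-new} and the derivative formula for $\Phi_{\abs\beta}$ gives, block by block, the same limiting expression as $\Phi'(\epsilon^k,W^k)$. The converse is handled symmetrically: given $(\epsilon^k,Y^k)\to(0,0)$, one inserts $(P^TY^kP)_{\beta\beta}$ into the $\beta$-block of $W$'s spectral decomposition to obtain $W^k$, again with no rescaling. In both directions the constructions are exact at the spectral level, so no eigenvalue perturbation theory is needed.

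By contrast, your argument hinges on the unproved assertion that ``$\Phi'(\epsilon_k,W^k)$ coincides asymptotically with $\cL'(\tilde\tau_k,\tilde H^k)$ for a suitably chosen sequence''. You never say what $(\tilde\tau_k,\tilde H^k)$ is, and the natural candidate $(\tau_k,H^k)=(\epsilon_k/t_k,(W^k-W)/t_k)$ forces you through genuine first-order eigenvalue/eigenvector perturbation: the $\beta$-cluster eigenvalues of $W^k$ equal $t_k$ times those of $P_\beta^TH^kP_\beta$ only up to $O(t_k^2)$ corrections, the eigenvectors $P^k$ agree with $P$ only up to $O(t_k)$, and you must argue that these errors wash out in the limit of $\Omega(\epsilon_k,d(W^k))$ and $\cD(\epsilon_k,d(W^k))$. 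This is doable but is precisely the work you have skipped; the paper's explicit $\beta$-block extraction/insertion sidesteps it entirely. Your treatment of the extra term $-\tfrac{\abs\tau}{2}\sum_{i\in\alpha}p_ip_i^T$ is correct in spirit, and in the paper's computation it simply reappears as $-\tfrac{\tau}{2}\,\mathrm{sgn}(\epsilon^k)\sum_{i\in\alpha}p_ip_i^T$ on the $\cL'$ side, matching $\cD(\epsilon^k,d^k)_\alpha$ on the $\Phi'$ side once $\abs{\epsilon^k}<\min_{i\in\alpha}d_i$; no separate sign-by-sign case split is required.
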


Recall that $\Phi_{\abs{\beta}}$ is also locally Lipschitz continuous. Hence, $\partial_B \Phi_{\abs{\beta}}$ and $ \partial \Phi_{\abs{\beta}} $ are both well-defined. The following lemma provides an effective way to calculate $\partial_B\Phi(0,W)$ and $\partial \Phi(0, W)$.

\begin{lemma}
	\label{lemma-partial-B-Phi}
	For any $W\in \S^n$ with the spectral decomposition \eqref{eq-eig-W}. $V\in \partial_B\Phi(0,W)$ (respectively, $ \blue{\partial \Phi(0,W) }$) if and only if there exist $V_{\abs{\beta}}\in \partial_B\Phi_{\abs{\beta}}(0,0)$ (respectively, $ \partial \Phi_{\abs{\beta}}(0,0) $) and a scalar $v\in\{-1,1\}$ (respectively, $[-1,1]$) such that for all $(\tau, H)\in \R\times \S^n$,
	\[
	V(\tau, H) =
	\begin{pmatrix}
		\tilde H_{\alpha\alpha} & \tilde H_{\alpha\beta} & [\Omega_0(d)]_{\alpha \gamma} \circ \tilde H_{\alpha \gamma}  \\
		\tilde H_{\alpha \beta}^T & V_{\abs{\beta}}(\tau, \tilde H_{\beta\beta}) & 0 \\
		\tilde H_{\alpha \gamma}^T\circ [\Omega_0(d)]_{\alpha \gamma}^T & 0 & 0
	\end{pmatrix} P^T +  \frac{v\tau}{2}\sum_{i\in\alpha}p_ip_i^T,
	\]
	where $\tilde{H} := P^THP$. Moreover, $V_{\abs{\beta}}\in \partial_B\Phi_{\abs{\beta}}(0,0)$ if and only if there exist an orthogonal matrix $U\in \R^{\abs{\beta}\times\abs{\beta}}$ and a symmetric matrix
	\[
	\Omega_{\abs{\beta}}\in \left\{\Omega\in \S^{\abs{\beta}}: \Omega_{\abs{\beta}} = \displaystyle\lim_{k\rightarrow\infty} \Omega(\epsilon^k, z^k), (\epsilon^k, z^k)\rightarrow (0,0), \epsilon^k\neq 0, z_1^k\geq \dots \geq z_{\abs{\beta}}^k\right\}
	\]
	such that
	\[
	V_{\abs{\beta}}(0,Y) = U\left[\Omega_{\abs{\beta}}\circ(U^TYU)\right]U^T,\quad \forall Y\in \S^{\abs{\beta}}.
	\]
\end{lemma}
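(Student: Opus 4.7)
The plan is to invoke Lemma \ref{lemma-partial-Gamma} to reduce the computation of $\partial_B\Phi(0,W)$ to that of $\partial_B\cL(0,0)$, and then read off the structure directly from the explicit formula \eqref{eq-dir-diff-new} for $\cL(\tau,H)=\Phi'((0,W);(\tau,H))$. In \eqref{eq-dir-diff-new}, three ingredients depend on $(\tau,H)$: the linear $H$-dependent ``outer'' blocks on indices $\alpha,\gamma$ involving $[\Omega_0(d)]_{\alpha\gamma}$, the inner action $\Phi_{|\beta|}(\tau,\tilde H_{\beta\beta})$ on the degenerate block, and the scalar-nonsmooth perturbation $-\tfrac{|\tau|}{2}\sum_{i\in\alpha}p_ip_i^T$. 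The outer blocks are linear in $H$, so only the latter two contribute nonsmoothness, and both nonsmoothness sources involve $\tau$.

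First I would determine the smooth locus of $\cL$. For $\tau\neq 0$ the perturbation term is smooth with $\tau$-partial $-\tfrac{\mathrm{sign}(\tau)}{2}\sum_{i\in\alpha}p_ip_i^T$, and by Proposition \ref{prop-huber-properties}(1) applied in $\S^{|\beta|}$, $\Phi_{|\beta|}$ is F-differentiable at almost every $(\tau,\tilde H_{\beta\beta})$ with $\tau\neq 0$. Hence for such $(\tau,H)$ the F-derivative $\cL'(\tau,H)$ exists and splits cleanly into the outer $\alpha,\gamma$ blocks with $[\Omega_0(d)]_{\alpha\gamma}$, the block $\Phi_{|\beta|}'(\tau,\tilde H_{\beta\beta})$ acting on $(\tau,\tilde H_{\beta\beta})$, and the sign term on the $\alpha$-diagonal. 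Taking a sequence $(\tau^k,H^k)\to(0,0)$ along which $\cL$ is F-differentiable and $\cL'(\tau^k,H^k)$ converges, the factor $-\lim_k\mathrm{sign}(\tau^k)$ produces $v\in\{-1,+1\}$ (matched with the sign convention in the statement), and the limiting block $\Phi_{|\beta|}'(\tau^k,\tilde H^k_{\beta\beta})$ gives an element $V_{|\beta|}\in\partial_B\Phi_{|\beta|}(0,0)$ by definition. The converse direction realizes any prescribed pair $(v,V_{|\beta|})$ by picking $\tau^k\to 0$ with the right sign and $\tilde H^k_{\beta\beta}$ along a witnessing sequence for $V_{|\beta|}$.

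For the internal characterization of $V_{|\beta|}$, I would again apply Proposition \ref{prop-huber-properties}(1), now in $\S^{|\beta|}$. At any smooth point $(\epsilon,Z)$ with $\epsilon\neq 0$ and $Z=U\,\mathrm{diag}(z)\,U^T$ with $z_1\ge\cdots\ge z_{|\beta|}$, the $(0,Y)$-slice of $\Phi_{|\beta|}'(\epsilon,Z)$ is exactly $U[\Omega(\epsilon,z)\circ U^T Y U]U^T$. Passing to a subsequence so that $U^k\to U$ (compactness of the orthogonal group) and $\Omega(\epsilon^k,z^k)\to\Omega_{|\beta|}$ yields the claimed formula; conversely, any admissible pair $(U,\Omega_{|\beta|})$ is realized by construction. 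The Clarke statement then follows from $\partial\Phi(0,W)=\mathrm{conv}\,\partial_B\Phi(0,W)$ together with the linearity of the map $(v,V_{|\beta|})\mapsto V$: convexifying the $v$-slot produces $[-1,+1]$, and convexifying the $V_{|\beta|}$-slot produces $\partial\Phi_{|\beta|}(0,0)$.

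The main obstacle I anticipate is the bookkeeping in simultaneous limits. I need to show that along a single sequence $(\tau^k,H^k)\to(0,0)$ in the smooth locus, the diagonalizers $U^k$ of $\tilde H^k_{\beta\beta}$ can be chosen orthogonally with ordered eigenvalues so that the limits feed consistently into the $V_{|\beta|}$ formula, and conversely that an arbitrary pair $(v,V_{|\beta|})$ admits a joint realizing sequence (so that the two limits do not compete). A secondary technical point is verifying that convex hulls in the two independent slots commute with the block-assembly map into $\partial_B\cL(0,0)$; this is immediate from linearity but should be recorded carefully to avoid spurious cross-terms between $v$ and $V_{|\beta|}$.
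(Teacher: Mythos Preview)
Your approach is correct and closely parallels the paper's: both start from Lemma~\ref{lemma-partial-Gamma} to reduce $\partial_B\Phi(0,W)$ to $\partial_B\cL(0,0)$, and both then exploit the explicit block form \eqref{eq-dir-diff-new}. The difference is in how the second step is executed. The paper factors $\cL=\Upsilon\circ\Psi$ with $\Psi(\tau,H)=(\tau,P^THP)$ a smooth linear map whose derivative is onto, and then invokes a chain rule for B-subdifferentials (\cite[Lemma~1]{chan2008constraint}) to obtain $\partial_B\cL(0,0)=\partial_B\Upsilon(0,0)\Psi'(0,0)$ in one stroke; the structure of $\partial_B\Upsilon(0,0)$ is then read off directly, with the $\Phi_{|\beta|}$ block and the $-\tfrac{|\nu|}{2}$ term sitting in separate coordinates of $\Upsilon$. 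You instead carry out the limiting-sequence argument by hand on $\cL$. The payoff of the paper's factorization is precisely that it absorbs the ``simultaneous limits'' bookkeeping you flag as your main obstacle: the orthogonal conjugation by $P$ is peeled off once by $\Psi$, and the two nonsmooth pieces of $\Upsilon$ are handled jointly by a single sequence $(\nu^k,Y^k)\to(0,0)$, so no separate synchronization argument is needed.

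One caution on your Clarke step: the map $(v,V_{|\beta|})\mapsto V$ is linear, so $\mathrm{conv}$ commutes with it, but the pairs $(v,V_{|\beta|})$ arising from $\partial_B$ are \emph{not} a priori an uncoupled product---both come from the same $\tau^k$ sequence (since $h$ depends on $\epsilon$ only through $|\epsilon|$, the sign of $\tau^k$ is tied to the $\tau$-component of $V_{|\beta|}$). Your remark that this is ``immediate from linearity'' is therefore too quick; you would need either to argue that the coupled set already has the full product as its image under $L$, or (as the paper does) to run the same chain-rule argument with Clarke subdifferentials in place of B-subdifferentials rather than convexifying coordinatewise after the fact.
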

\begin{proof} See Appendix \ref{proof-partial-B-Phi}.
\end{proof}

As a corollary of Lemma \ref{lemma-partial-Pi} and Lemma \ref{lemma-partial-B-Phi}, we {can verify} that for any $V_0\in \partial_B\Pi_{\S_+^n}(W)$, there exists $V\in \partial_B\Phi(0, W)$ such that
\begin{equation}\label{eq-V0-VH}
	V_0(H) = V(0, H), \quad \forall H\in \S^n.
\end{equation}

We end this section by presenting a useful inequality for elements in  $ \partial \Phi(0, W) $.
\begin{lemma}	\label{lemma-V-PSD}
	Let $ W \in \S^n$ have the spectral decomposition \eqref{eq-eig-W}. Then for any $ V\in \partial\Phi(0,W) $,
	\[
		\inprod{H - V(0, H)}{V(0, H)} \geq 0, \quad \forall H\in \S^n.
	\]
\end{lemma}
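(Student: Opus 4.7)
The plan is to reduce the inequality to the operator-order statement $0\preceq V(0,\cdot)\preceq\cI$ on $\S^n$, since this immediately yields that $V(0,\cdot)\bigl(\cI-V(0,\cdot)\bigr)\succeq 0$ and hence
\[
\inprod{H-V(0,H)}{V(0,H)}=\inprod{H}{V(0,\cdot)\bigl(\cI-V(0,\cdot)\bigr)H}\geq 0.
\]
I will establish this two-sided operator bound first for $V\in\partial_B\Phi(0,W)$ and then extend to $V\in\partial\Phi(0,W)$ by convexity.

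First, fix $V\in\partial_B\Phi(0,W)$. By Lemma \ref{lemma-partial-B-Phi}, the $(\beta\beta)$-block of $V(0,H)$ (written in terms of $\tilde H=P^THP$) equals $U\bigl[\Omega_{\abs{\beta}}\circ(U^T\tilde H_{\beta\beta}U)\bigr]U^T$ for some orthogonal $U\in\R^{\abs{\beta}\times\abs{\beta}}$ and some $\Omega_{\abs{\beta}}\in\S^{\abs{\beta}}$ with entries in $[0,1]$ (by the definition of $\Omega$ in \eqref{eq-first-diff}); the $(\alpha\gamma)$-block is a Hadamard scaling by $[\Omega_0(d)]_{\alpha\gamma}$, whose entries equal $d_i/(d_i-d_j)\in[0,1]$ since $d_i>0>d_j$; the $(\alpha\alpha)$, $(\alpha\beta)$, $(\beta\alpha)$ blocks are the identity; and the remaining blocks are zero. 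Setting $Q:=P\cdot\mathrm{diag}(I_{\abs{\alpha}},U,I_{\abs{\gamma}})$, a block rewriting produces
\[
V(0,H)=Q\bigl[M\circ(Q^T H Q)\bigr]Q^T,
\]
where $M\in\S^n$ is the symmetric matrix assembled from the blocks above and has every entry in $[0,1]$. I view this rewriting as the main obstacle: one has to absorb the internal orthogonal matrix $U$ attached to the $(\beta\beta)$-block into a single rotation $Q$ of $\R^n$, so that the whole mapping becomes a pure Hadamard-product operator after one common change of basis.

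Once this is in hand, the operator $T_M:Y\mapsto M\circ Y$ on $\S^n$ is self-adjoint with respect to the trace inner product and diagonalizes in the orthonormal basis $\{E_{ii}\}_i\cup\{(E_{ij}+E_{ji})/\sqrt{2}\}_{i<j}$, with eigenvalues $\{M_{ii}\}_i\cup\{M_{ij}\}_{i<j}\subseteq[0,1]$. Since $Y\mapsto QYQ^T$ is an isometry on $\S^n$, the map $V(0,\cdot)$ is an orthogonal conjugate of $T_M$ and is therefore self-adjoint with spectrum in $[0,1]$, i.e.\ $0\preceq V(0,\cdot)\preceq\cI$ in the operator order. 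For a general $V\in\partial\Phi(0,W)=\mathrm{conv}\,\partial_B\Phi(0,W)$, I will write $V=\sum_k\theta_kV^k$ with $V^k\in\partial_B\Phi(0,W)$; both self-adjointness and the two-sided operator bound survive convex combinations, so $0\preceq V(0,\cdot)\preceq\cI$ holds for such $V$ as well, and the opening display then completes the proof.
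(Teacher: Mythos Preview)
Your proof is correct and follows essentially the same approach as the paper: both identify that, after an orthogonal change of basis (the paper uses $P$, you absorb the inner rotation and use $Q=P\,\mathrm{diag}(I_{\abs{\alpha}},U,I_{\abs{\gamma}})$), the map $V(0,\cdot)$ is Hadamard multiplication by a symmetric matrix with entries in $[0,1]$, and the inequality then reduces entrywise to $a(1-a)\ge0$ for $a\in[0,1]$. The one small difference is in the extension from $\partial_B\Phi(0,W)$ to $\partial\Phi(0,W)$: the paper applies Jensen's inequality to $\theta(X)=\norm{X}^2$ to handle the convex combination, whereas you pass the two-sided operator bound $0\preceq V(0,\cdot)\preceq\cI$ and self-adjointness through the convex combination directly; both arguments are immediate.
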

\begin{proof} See Appendix \ref{proof-V-PSD}. 
\end{proof}

\section{A squared smoothing Newton method}
\label{smooth-newton}
In this section, we shall present our main algorithm, a squared smoothing Newton method {via} the Huber smoothing function. We then focus on analyzing its correctness, global convergence, and the fast local convergence rate under suitable regularity conditions. \blue{In the following, $\R,\; \R_+$, and $ \R_{++}$ denote the space of real numbers, the nonnegative orthant and the positive orthant, respectively.}

By the results presented in the previous section, we can define the smoothing function for $\cF$ based on the smoothing function $\Phi$ for $\Pi_{\S_+^n}$. In particular, let $\cE:\R\times \X\rightarrow {\R^m\times\S^n\times \S^n}$ be defined as
\begin{equation}
	\label{eq-G}
	\cE(\epsilon, X, y, Z) =
	\begin{pmatrix}
		\cA X {+ \kappa_p \abs{\epsilon}y  - b}  \\
		-\cA^* y - Z  + C \\
		(1+\kappa_c \abs{\epsilon})X - \Phi(\epsilon, X- Z)
	\end{pmatrix},
	\quad \forall (\epsilon, X, y, Z)\in \R\times \X,
\end{equation}
where $\kappa_p > 0,\, \kappa_c > 0$ are two given constants and $\X:= \S^n\times \R^m \times \S^n$. Then, $\cE$ is a continuously differentiable function around any $(\epsilon,X,y,Z)$ for any $\epsilon \neq  0$. Also, it satisfies
\[
\cE(\epsilon', X', y', Z') \rightarrow \cF(X, y, Z),\quad \textrm{as}\; (\epsilon', X', y', Z') \rightarrow (0, X, y, Z).
\]
Note that adding the perturbation terms $\kappa_p\abs{\epsilon}y$ and $\kappa_c\abs{\epsilon}X$ for constructing the smoothing function of $\cF$ is crucial in our algorithmic design, since it {ensures} that the proposed algorithm is well-defined (see Lemma \ref{lemma-nonsingular}).

Define the function $\hcE:\R\times \X \rightarrow \R\times {\R^m\times\S^n\times \S^n}$ by
\begin{equation}
	\label{eq-E}
	\hcE(\epsilon, X, y, Z) = \begin{pmatrix}
		\epsilon \\
		\cE(\epsilon, X, y, Z)
	\end{pmatrix},\quad \forall (\epsilon, X, y, Z) \in \R\times \X.
\end{equation}
Then solving the nonsmooth equation $\cF(X, y, Z) = 0$ is equivalent to solving the following system of nonlinear equations
\begin{equation}
	\label{eq-smooth-E}
	\hcE(\epsilon,X, y, Z) = 0.	
\end{equation}

Associated with the mapping $ \hcE $, we have the natural merit function  $\psi:\R\times \X\rightarrow \R_+$ that is defined as
\begin{equation}
	\label{eq-merit}
	\psi(\epsilon,X, y, Z) := \norm{\hcE(\epsilon, X, y, Z)}^2,\quad \forall  (\epsilon, X, y, Z) \in \R\times \X.
\end{equation}
Given $r \in {(0,1]}$, $\hat r\in (0,\infty)$ and $\tau\in (0,1)$, {we can} define two functions $\zeta:\R\times \X\rightarrow \R_+$ and $\eta: \R\times \X\rightarrow \R_+$ as follows:
\begin{equation}\label{eq-zeta}
	\zeta(\epsilon, X, y, Z) = r\min\left\{1, \norm{\hcE(\epsilon, X, y, Z)}^{1+\tau}\right\},\quad (\epsilon, X, y, Z) \in \R\times \X,
\end{equation}
and
\begin{equation}\label{eq-eta}
	\eta(\epsilon, X, y, Z) = \min\left\{1,\hat r\norm{\hcE(\epsilon, X, y, Z)}^{\tau}\right\},\quad (\epsilon, X, y, Z) \in \R\times \X.
\end{equation}
Then the inexact smoothing Newton can be described in Algorithm \ref{alg-ssnm}.

\begin{algorithm}[!]
    \centering
    \begin{algorithmic}[1]
        \State \textbf{Input:} $\hat \epsilon \in (0, \infty)$, $ r \in (0,1) $, $\hat r \in (0, \infty)$, ${\hat\eta} \in (0,1)$ be such that $\delta:= \sqrt{2}\max\{r\hat\epsilon, {\hat\eta}\} < 1$, $ \rho\in (0,1) $, $ \sigma\in (0,1/2) $, $ \tau\in (0,1] $, $\epsilon^0 = \hat \epsilon $, $ (X^0, y^0, Z^0)\in {\S^n \times} \R^m \times \S^n $.
        
        \For{$k\geq 0$}        
            \If{$ \hcE(\epsilon^k, X^k, y^k, Z^k) = 0 $}
                \State \textbf{Output:} $ (\epsilon^k, X^k, y^k, Z^k) $.
            \Else 
                \State Compute $\eta_k := \eta(\epsilon^k, X^k, y^k, Z^k)$ and $\zeta_k:=\zeta(\epsilon^k, X^k, y^k, Z^k)$.
                \State Solve the following equation
			\begin{equation}\label{eq-alg-direction}
				\hcE(\epsilon^k, X^k, y^k, Z^k) + \hcE'(\epsilon^k, X^k, y^k, Z^k)
				\begin{pmatrix}
					\Delta \epsilon^k \\ \Delta X^k \\ \Delta y^k \\ \Delta Z^k
				\end{pmatrix} =
				\begin{pmatrix}
					\zeta_k\hat \epsilon \\ 0 \\ 0 \\ 0
				\end{pmatrix}
			\end{equation}
			approximately such that
			\begin{equation}\label{eq-alg-inexactness}
                \norm{\cR_k} \leq \min\left\{\eta_k \norm{\cE(\epsilon^k, X^k, y^k, Z^k) + \cE_\epsilon'(\epsilon^k, X^k, y^k, Z^k)\Delta \epsilon^k}, \hat\eta \norm{\hcE(\epsilon^k, X^k, y^k, Z^k)}\right\},
			\end{equation}
			where $\Delta \epsilon^k := -\epsilon^k + \zeta_k\hat \epsilon$ and 
			$
				\cR_k := \cE(\epsilon^k, X^k, y^k, Z^k) + \cE'(\epsilon^k, X^k, y^k, Z^k)
    			\begin{pmatrix}
    				\Delta \epsilon^k \\ \Delta X^k \\ \Delta y^k \\ \Delta Z^k
    			\end{pmatrix}.
			$
            \State Compute $\ell_k$ as the smallest nonnegative integer $\ell$ satisfying
			\[
                \psi(\epsilon^k + \rho^\ell \Delta \epsilon^k, X^k + \rho^\ell \Delta X^k,  y^k + \rho^\ell \Delta y^k, Z^k + \rho^\ell \Delta Z^k) \leq [1 - 2\sigma (1 - \delta)\rho^\ell]\psi(\epsilon^k, X^k, y^k, Z^k).
			\]
            \State {Compute} $$(\epsilon^{k+1}, X^{k+1}, y^{k+1}, Z^{k+1}) = (\epsilon^k + \rho^{\ell_k} \Delta \epsilon^k, X^k + \rho^{\ell_k}\Delta X^k,  y^k + \rho^{\ell_k} \Delta y^k, Z^k + \rho^{\ell_k} \Delta Z^k).$$
            \EndIf
        \EndFor
    \end{algorithmic}
    \caption{A squared smoothing Newton method}
	\label{alg-ssnm}
\end{algorithm}

For the rest of this section, we shall show that Algorithm \ref{alg-ssnm} is well-defined and analyze its convergence properties. Note that the global convergence and the fast local convergence rate under the nonsingularity of $ \partial_B \hcE $ or $ \partial \hcE $ at solution points of nonlinear equations \eqref{eq-smooth-E} are studied extensively in the literature {(see e.g., \cite{chan2008constraint,gao2010calibrating})}. However, since we are using the Huber function, conditions that {ensure} the {nonsingularity} conditions need to be redeveloped.  {Furthermore, Algorithm \ref{alg-ssnm} allows one to specify the parameter $\tau\in (0,1]$ (which is used to control the rate of convergence) whereas existing results mainly focus on the case when $\tau = 1$.} {Consequently}, to make the paper self-contained, we also present the details of the analysis for our key results.

\subsection{Global convergence}
We first show that the linear system \eqref{eq-alg-direction} is well-defined and solvable for any $\epsilon^k > 0$. Hence, the inexactness {conditions} in \eqref{eq-alg-inexactness} {are} always achievable. This objective can be accomplished by showing that the coefficient matrix of the linear system \eqref{eq-alg-direction} is nonsingular for any $\epsilon^k > 0$. \blue{In the following exposition, $\cE'$ and $\hcE'$ denote the derivative of $\cE$ and $\hcE$, respectively, and $\cE'_\epsilon$ denotes the partial derivative of $\cE$ with respect to the first argument $\epsilon$.}

\begin{lemma} \label{lemma-nonsingular}
	For any $(\epsilon', X', y', Z') \in \R_{++}\times \X$, there exists an open neighborhood $\cU$ of $ (\epsilon', X', y', Z') $ such that $\hcE'(\epsilon, X, y, Z)$ is nonsingular for any $ (\epsilon, X, y, Z) \in \cU$  with $\epsilon \in \R_{++}$.
\end{lemma}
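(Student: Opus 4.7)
My plan is to reduce the lemma to showing nonsingularity of $\hcE'$ at the single point $(\epsilon', X', y', Z')$; the neighborhood claim will then follow from continuity. Since $\epsilon' > 0$, on a small open ball $\cU \ni (\epsilon', X', y', Z')$ contained in $\R_{++}\times \X$ we have $|\epsilon| = \epsilon$ throughout, so $\hcE$ is $C^1$ on $\cU$ and $(\epsilon, X, y, Z)\mapsto \hcE'(\epsilon, X, y, Z)$ is continuous; since the set of invertible linear maps is open, single-point nonsingularity extends to a (possibly smaller) neighborhood.

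For the single-point claim I would set $\hcE'(\epsilon', X', y', Z')(\tau, \Delta X, \Delta y, \Delta Z) = 0$. The first component forces $\tau = 0$, leaving
\[
\cA\Delta X + \kappa_p\epsilon'\Delta y = 0, \qquad -\cA^*\Delta y - \Delta Z = 0, \qquad (1+\kappa_c\epsilon')\Delta X - V(\Delta X - \Delta Z) = 0,
\]
where $V(\cdot) := \Phi'(\epsilon', X'-Z')(0,\cdot\,)$ is the partial derivative of $\Phi$ in its second argument. By Proposition \ref{prop-huber-properties} together with the entrywise bound $0\le [\Omega(\epsilon', d)]_{ij}\le 1$ recorded after \eqref{eq-first-diff}, $V$ is self-adjoint on $\S^n$ with $0 \preceq V \preceq \cI$; indeed, in the eigenbasis of $X'-Z'$ one has $\langle H, V(H)\rangle = \sum_{i,j}[\Omega(\epsilon',d)]_{ij}\,\tilde H_{ij}^2$ with $\tilde H = P^T H P$, which lies in $[0, \|H\|^2]$.

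From the second equation $\Delta Z = -\cA^*\Delta y$; substituting into the third yields $M\,\Delta X = V(\cA^*\Delta y)$ with $M := \kappa_c\epsilon'\cI + (\cI - V)$. Since $\kappa_c\epsilon' > 0$ and $\cI - V \succeq 0$, $M$ is positive definite and, being a polynomial in $V$, commutes with $V$; hence $M^{-1}V$ is self-adjoint with nonnegative eigenvalues $v/(1+\kappa_c\epsilon' - v)$, $v\in[0,1]$. Substituting $\Delta X = M^{-1}V(\cA^*\Delta y)$ into the first equation and pairing with $\Delta y$ gives
\[
\langle M^{-1}V(\cA^*\Delta y),\, \cA^*\Delta y\rangle + \kappa_p\epsilon'\|\Delta y\|^2 = 0.
\]
Both terms are nonnegative and $\kappa_p\epsilon' > 0$, so $\Delta y = 0$; then $\Delta Z = 0$ and $\Delta X = M^{-1}V(0) = 0$ follow immediately.

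No step is deep, but the argument crucially exploits the two perturbations built into \eqref{eq-G}: the $\kappa_c|\epsilon|X$ term shifts $\cI - V$ to a strictly positive definite $M$ even when $V$ has unit eigenvalues (i.e.\ when $X'-Z'$ has strictly positive eigenvalues), and the $\kappa_p|\epsilon|y$ term supplies the coercive quadratic $\kappa_p\epsilon'\|\Delta y\|^2$ that finally pins down $\Delta y$. The main piece of bookkeeping to get right is that $V$ and $M$ share an eigenbasis so that $M^{-1}V$ is itself PSD, and that the entrywise bounds on $\Omega(\epsilon',d)$ upgrade to operator bounds $0\preceq V\preceq \cI$ via the Hadamard-product identity above.
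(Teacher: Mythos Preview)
Your proof is correct and follows essentially the same route as the paper: reduce to the kernel equations, use $0\preceq V\preceq\cI$ to invert $(1+\kappa_c\epsilon')\cI - V$, and conclude that the resulting Schur-complement operator $\kappa_p\epsilon' I_m + \cA M^{-1}V\cA^*$ on $\Delta y$ is positive definite. The only cosmetic difference is that the paper's computation works verbatim at every $(\epsilon,X,y,Z)$ with $\epsilon>0$, so no separate continuity argument is needed; your single-point-plus-openness approach is valid but slightly less direct.
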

\begin{proof} 
	See Appendix \ref{proof-nonsingular}.
\end{proof}

The next task is to show that the line search procedure is well-defined, i.e., $\ell_k$ is finite for all $k\geq 0$. We will see that the inexactness condition $\norm{\cR_k}\leq {\hat\eta} \norm{\hcE(\epsilon^k, X^k, y^k, Z^k)}$ plays a fundamental role in our analysis. We also note that the inexactness condition
\[
	\norm{\cR_k}\leq \eta_k \norm{\cE(\epsilon^k, X^k, y^k, Z^k) + \cE_\epsilon'(\epsilon^k, X^k, y^k, Z^k)\Delta \epsilon^k}
\]
does not affect the correctness of Algorithm \ref{alg-ssnm} but will be crucial in analyzing the local convergence rate of Algorithm \ref{alg-ssnm}.

\begin{lemma}\label{lemma-step-size}
	For any $(\epsilon', X', y', Z') \in \R_{++}\times \X$, there exist an open neighborhood $\cU$ of $ (\epsilon', X', y', Z') $ and a positive scalar $\bar\alpha\in (0, 1]$ such that for any $ (\epsilon, X, y, Z) \in \cU$ with $\epsilon \in \R_{++}$ and $\alpha\in (0, \bar \alpha]$, it holds that
	\[
	\psi(\epsilon + \alpha \Delta \epsilon, X + \alpha\Delta X, y + \alpha \Delta y, Z + \alpha \Delta Z) \leq \left[1 - 2\sigma(1 - \delta)\alpha \right] \psi(\epsilon, X, y, Z),
	\]
	where {$\bDelta := (\Delta \epsilon; \Delta X; \Delta y; \Delta Z)\in \R \times \X$} satisfies
	\[
	\Delta \epsilon = -\epsilon + \zeta(\epsilon, X, y, Z)\hat \epsilon,\quad
	\norm{\cE(\epsilon, X, y, Z) + \cE'(\epsilon, X, y, Z) {\bDelta}}
	%\begin{pmatrix}
	%		\Delta \epsilon \\ \Delta X \\ \Delta y \\ \Delta Z
	%\end{pmatrix} }
	\leq {\hat\eta} \norm{\hcE(\epsilon, X, y, Z)}.
	\]
\end{lemma}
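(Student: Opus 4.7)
The plan follows the standard template for inexact smoothing Newton line searches, adapted to the perturbed smoothing map $\hcE$. Since $\epsilon' > 0$, Lemma \ref{lemma-nonsingular} supplies an open neighborhood $\cU \subset \R_{++} \times \X$ of $(\epsilon', X', y', Z')$ on which $\hcE$ is continuously differentiable and $\hcE'$ is invertible with uniformly bounded inverse. The first component of $\hcE$ equals $\epsilon$, so $\psi(\epsilon', X', y', Z') \geq (\epsilon')^2 > 0$, and by continuity I may shrink $\cU$ to a precompact set on which $\psi$ is bounded below by a positive constant.

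The key residual estimate is to show that $\norm{\hcE + \hcE'\bDelta} \leq \delta \norm{\hcE}$ on $\cU$. Writing $\hcE + \hcE'\bDelta = (\zeta \hat\epsilon;\, \cE + \cE'\bDelta)$ and combining the hypothesis $\norm{\cE + \cE'\bDelta} \leq \hat\eta \norm{\hcE}$ with the elementary bound $\zeta(\epsilon, X, y, Z) \leq r \norm{\hcE}$ (immediate from \eqref{eq-zeta}: if $\norm{\hcE} \leq 1$ then $\norm{\hcE}^{1+\tau} \leq \norm{\hcE}$, while if $\norm{\hcE} > 1$ then $\zeta \leq r < r\norm{\hcE}$), I obtain
\[
\norm{\hcE + \hcE'\bDelta}^2 \leq (r\hat\epsilon)^2 \norm{\hcE}^2 + \hat\eta^2 \norm{\hcE}^2 \leq 2\max\{r\hat\epsilon, \hat\eta\}^2 \norm{\hcE}^2 = \delta^2 \norm{\hcE}^2.
\]
A triangle inequality then yields $\norm{\hcE'\bDelta} \leq (1+\delta)\norm{\hcE}$, and invertibility of $\hcE'$ on $\cU$ gives a uniform bound $\norm{\bDelta} \leq M$ there.

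With this in hand, the descent estimate is short. Writing $\hcE' \bDelta = -\hcE + (\hcE + \hcE'\bDelta)$ and using Cauchy-Schwarz, the directional derivative of $\psi = \norm{\hcE}^2$ satisfies
\[
\psi'(\epsilon, X, y, Z) \bDelta \;=\; 2\inprod{\hcE}{\hcE'\bDelta} \;\leq\; -2(1 - \delta)\, \psi(\epsilon, X, y, Z).
\]
To upgrade this to the Armijo condition, I would apply the mean value theorem in integral form,
\[
\psi(w + \alpha \bDelta) - \psi(w) - \alpha\, \psi'(w) \bDelta \;=\; \alpha \int_0^1 [\psi'(w + t\alpha\bDelta) - \psi'(w)]\,\bDelta\, dt,
\]
with $w = (\epsilon, X, y, Z)$, and absorb the integral remainder into a term $\alpha \cdot o(1)$ that is uniform over $w \in \cU$ and admissible $\bDelta$. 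Substituting the descent bound, the Armijo inequality reduces to verifying that $o(1) \leq 2(1-\sigma)(1-\delta)\, \psi(w)$, which holds for all $\alpha \in (0, \bar\alpha]$ with some $\bar\alpha > 0$ because $\psi$ is bounded below by a positive constant on $\cU$ and the $o(1)$ term tends to zero uniformly.

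The main obstacle is establishing uniformity of the remainder $o(1)$ simultaneously over the base point $w \in \cU$ and over the admissible direction $\bDelta$, which is not uniquely determined but only constrained by the inexactness condition. This is handled by combining the uniform bound $\norm{\bDelta} \leq M$ from the residual estimate with the uniform continuity of $\psi'$ on a precompact shrinkage of $\cU$, so that the modulus of continuity controls the remainder uniformly in both $w$ and $\bDelta$.
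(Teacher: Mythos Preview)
Your proof is correct and follows the same overall template as the paper: establish the descent inequality $\psi'(w)\bDelta \leq -2(1-\delta)\psi(w)$ and then absorb the Taylor remainder using uniform continuity of $\nabla\psi$ together with a uniform bound on $\norm{\bDelta}$.

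The difference lies in how the descent inequality is obtained. The paper expands $\inprod{\nabla\psi}{\bDelta} = -2\psi + 2\zeta\epsilon\hat\epsilon + 2\inprod{\cR}{\cE}$ and then carries out a two-case analysis ($\psi>1$ versus $\psi\leq1$), using in each case an inequality of the form $a+b \leq \sqrt{2}\sqrt{a^2+b^2}$ to recover the factor $\sqrt{2}\max\{r\hat\epsilon,\hat\eta\}$. Your route is more economical: you bound the full residual directly via $\norm{\hcE+\hcE'\bDelta}^2 = (\zeta\hat\epsilon)^2 + \norm{\cR}^2 \leq \big((r\hat\epsilon)^2 + \hat\eta^2\big)\norm{\hcE}^2 \leq \delta^2\norm{\hcE}^2$, using the elementary observation $\zeta \leq r\norm{\hcE}$, and then a single Cauchy--Schwarz gives the descent bound without any case split. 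This is cleaner and also makes the uniform bound $\norm{\bDelta}\leq M$ (needed for uniformity of the $o(\alpha)$ term) explicit, a point the paper leaves implicit.
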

\begin{proof} See Appendix \ref{proof-step-size}. 
\end{proof}

As showed before, the nonsingularity of the coefficient matrix in \eqref{eq-alg-direction} requires $\epsilon^k$ \blue{being} positive. The next lemma shows that Algorithm \ref{alg-ssnm} generates $\epsilon^k$ that is lower bounded by $\zeta(\epsilon^k, X^k, y^k, Z^k)\hat\epsilon$. Thus, as long as the optimal solution is not found, $ \epsilon^k $ remains positive. To present the lemma, we need to define the following set
\[
	\cN := \left\{ (\epsilon, X, y, Z)\;:\; \epsilon \geq \zeta(\epsilon, X, y, Z)\hat\epsilon \right\}.
\]
\begin{lemma}
	\label{lemma-lb-epsilon}
	Suppose that for a given $k \geq 0$, $\epsilon^k \in \R_{++}$ and $(\epsilon^k, X^k, y^k, Z^k)\in \cN$. Then, for any $\alpha\in [0, 1]$ such that
	\[
		\psi(\epsilon^k + \alpha \Delta \epsilon^k, X^k + \alpha \Delta X^k, y^k + \alpha \Delta y^k, Z^k + \alpha \Delta Z^k) \leq [1 - 2\sigma (1 - \delta)\alpha]\psi(\epsilon^k, X^k, y^k, Z^k),
	\]
	it holds that
	\[
	(\epsilon^k + \alpha \Delta \epsilon^k, X^k + \alpha \Delta X^k,   y^k + \alpha \Delta y^k, Z^k + \alpha \Delta Z^k) \in \cN.
	\]
\end{lemma}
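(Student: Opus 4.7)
The plan is to exploit two structural facts: first, that $\epsilon^{k+1}$ is a convex combination of $\epsilon^k$ and $\zeta_k\hat\epsilon$, and second, that the Armijo-type descent forces the merit function (hence $\|\hcE\|$) to be nonincreasing along the accepted step, which in turn makes $\zeta$ nonincreasing.

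First I would unfold the definition of $\Delta\epsilon^k$. Writing $\zeta_k := \zeta(\epsilon^k, X^k, y^k, Z^k)$ and using $\Delta\epsilon^k = -\epsilon^k + \zeta_k\hat\epsilon$, one gets
\begin{equation*}
\epsilon^k + \alpha\Delta\epsilon^k = (1-\alpha)\epsilon^k + \alpha\zeta_k\hat\epsilon.
\end{equation*}
Since $(\epsilon^k, X^k, y^k, Z^k)\in\cN$ means exactly $\epsilon^k \geq \zeta_k\hat\epsilon$, and $\alpha\in[0,1]$, this convex combination satisfies $\epsilon^k + \alpha\Delta\epsilon^k \geq \zeta_k\hat\epsilon$.

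Next I would leverage the hypothesized descent inequality. Since $\sigma\in(0,1/2)$, $\delta\in(0,1)$, and $\alpha\in[0,1]$, the factor $1 - 2\sigma(1-\delta)\alpha$ lies in $(0,1]$. Therefore $\psi(\epsilon^k+\alpha\Delta\epsilon^k,\ldots)\le \psi(\epsilon^k,X^k,y^k,Z^k)$, and by the definition $\psi = \|\hcE\|^2$ this gives
\begin{equation*}
\norm{\hcE(\epsilon^k+\alpha\Delta\epsilon^k, X^k+\alpha\Delta X^k, y^k+\alpha\Delta y^k, Z^k+\alpha\Delta Z^k)} \leq \norm{\hcE(\epsilon^k, X^k, y^k, Z^k)}.
\end{equation*}
Because $\zeta(\cdot)$ is a monotone nondecreasing function of $\|\hcE(\cdot)\|$ (it is $r$ times the minimum of $1$ and $\|\hcE(\cdot)\|^{1+\tau}$), the above norm inequality implies $\zeta(\epsilon^k+\alpha\Delta\epsilon^k, X^k+\alpha\Delta X^k, y^k+\alpha\Delta y^k, Z^k+\alpha\Delta Z^k) \leq \zeta_k$.

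Chaining the two bounds yields $\epsilon^k+\alpha\Delta\epsilon^k \geq \zeta_k\hat\epsilon \geq \zeta(\epsilon^k+\alpha\Delta\epsilon^k, \ldots)\hat\epsilon$, which is exactly the membership $(\epsilon^k+\alpha\Delta\epsilon^k, X^k+\alpha\Delta X^k, y^k+\alpha\Delta y^k, Z^k+\alpha\Delta Z^k)\in\cN$. There is no real obstacle here; the only delicate point is recognizing that the line-search condition alone, without any Taylor expansion or step size control beyond $\alpha\in[0,1]$, suffices to propagate both the convex-combination lower bound on $\epsilon$ and the monotonicity of $\zeta$.
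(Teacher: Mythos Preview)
Your proof is correct and follows essentially the same approach as the paper. Both arguments establish $\epsilon^k+\alpha\Delta\epsilon^k \geq \zeta_k\hat\epsilon$ (you via the convex-combination representation, the paper via the equivalent observation that $\Delta\epsilon^k\le 0$ and $\alpha\le 1$), and then invoke the descent hypothesis to conclude $\zeta(\text{new point})\le\zeta_k$; your write-up merely spells out the monotonicity of $\zeta$ in $\|\hcE\|$ more explicitly than the paper does.
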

\begin{proof} See Appendix \ref{proof-lb-epsilon}.
\end{proof}

With those previous results, we can now establish the global convergence of Algorithm \ref{alg-ssnm}.

\begin{theorem}
	\label{thm-global-convergence}
	Algorithm \ref{alg-ssnm} is well-defined and generates an infinite sequence $\{(\epsilon^k, X^k, y^k, Z^k)\} \subseteq \cN$ with the property that any accumulation point $(\bar \epsilon, \bar X, \bar y, \bar Z)$ of $ \{(\epsilon^k, X^k, y^k, Z^k)\} $ (it exists if the solution set to the KKT system is nonempty and bounded)  is a solution of $\hcE(\epsilon, X, y, Z) = 0$, $ (\epsilon, X, y, Z)\in \R\times \X $.
\end{theorem}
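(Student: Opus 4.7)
The plan is to establish well-definedness by a straightforward induction using Lemmas~\ref{lemma-nonsingular}, \ref{lemma-step-size}, and \ref{lemma-lb-epsilon}, and then to rule out non-stationary accumulation points by a contradiction argument based on the uniform-decrease property of the merit function.

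First I would argue inductively that the iteration is well-defined and that $\epsilon^k > 0$ with $(\epsilon^k,X^k,y^k,Z^k)\in \cN$ for every $k\geq 0$. The base case holds since $\epsilon^0=\hat\epsilon>0$ and, by definition of $\cN$, $\hat\epsilon\geq r\hat\epsilon\geq \zeta(\epsilon^0,X^0,y^0,Z^0)\hat\epsilon$. Assuming $\epsilon^k>0$ and $(\epsilon^k,X^k,y^k,Z^k)\in\cN$, Lemma~\ref{lemma-nonsingular} guarantees that $\hcE'(\epsilon^k,X^k,y^k,Z^k)$ is nonsingular, so the inexact Newton direction $\bDelta^k$ satisfying \eqref{eq-alg-direction} and \eqref{eq-alg-inexactness} exists. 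Lemma~\ref{lemma-step-size} guarantees that the Armijo-type line search terminates with a finite $\ell_k$, and Lemma~\ref{lemma-lb-epsilon} then ensures that the updated iterate remains in $\cN$. Since $(\epsilon^{k+1},X^{k+1},y^{k+1},Z^{k+1})\in\cN$ means $\epsilon^{k+1}\geq \zeta(\epsilon^{k+1},\cdots)\hat\epsilon\geq 0$, and $\epsilon^{k+1}=0$ would force $\hcE(\epsilon^{k+1},X^{k+1},y^{k+1},Z^{k+1})=0$ (contradicting the assumption that the algorithm did not terminate), we conclude $\epsilon^{k+1}>0$, closing the induction.

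Next I would exploit the line search inequality itself. Since $1-2\sigma(1-\delta)\rho^{\ell_k}\in (0,1)$, the sequence $\{\psi(\epsilon^k,X^k,y^k,Z^k)\}$ is monotonically nonincreasing and bounded below by $0$, hence convergent to some limit $\psi^\star\geq 0$. Let $(\bar\epsilon,\bar X,\bar y,\bar Z)$ be an arbitrary accumulation point of the iterates and let $(\epsilon^{k_j},X^{k_j},y^{k_j},Z^{k_j})\to(\bar\epsilon,\bar X,\bar y,\bar Z)$. By the continuity of $\psi$, we have $\psi(\bar\epsilon,\bar X,\bar y,\bar Z)=\psi^\star$. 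The goal is to show $\psi^\star=0$, equivalently $\hcE(\bar\epsilon,\bar X,\bar y,\bar Z)=0$.

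The key step, and the one I expect to be the main obstacle, is the contradiction argument: suppose $\psi^\star>0$. Because $(\bar\epsilon,\bar X,\bar y,\bar Z)\in\cN$ (as $\cN$ is closed by continuity of $\zeta$ and $\hcE$) and $\hcE(\bar\epsilon,\bar X,\bar y,\bar Z)\neq 0$, we have $\zeta(\bar\epsilon,\bar X,\bar y,\bar Z)>0$, which together with the defining inequality of $\cN$ forces $\bar\epsilon>0$. Then Lemma~\ref{lemma-nonsingular} applies at $(\bar\epsilon,\bar X,\bar y,\bar Z)$, and Lemma~\ref{lemma-step-size} provides an open neighborhood $\cU$ of $(\bar\epsilon,\bar X,\bar y,\bar Z)$ and a uniform $\bar\alpha\in(0,1]$ such that the Armijo condition is satisfied at any point in $\cU\cap(\R_{++}\times\X)$ for all step sizes in $(0,\bar\alpha]$. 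Consequently, for all sufficiently large $j$, the smallest $\ell$ satisfying the line search condition obeys $\rho^{\ell_{k_j}}\geq \rho\bar\alpha$, which yields
\[
\psi(\epsilon^{k_j+1},X^{k_j+1},y^{k_j+1},Z^{k_j+1})\leq [1-2\sigma(1-\delta)\rho\bar\alpha]\,\psi(\epsilon^{k_j},X^{k_j},y^{k_j},Z^{k_j}).
\]
Passing to the limit along the subsequence and using monotonicity of the full sequence, we obtain $\psi^\star\leq [1-2\sigma(1-\delta)\rho\bar\alpha]\psi^\star$, i.e.\ $\psi^\star\leq 0$, contradicting $\psi^\star>0$. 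Therefore $\psi^\star=0$ and $(\bar\epsilon,\bar X,\bar y,\bar Z)$ solves $\hcE=0$, completing the proof.
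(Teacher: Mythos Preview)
Your proof is correct and follows essentially the same approach as the paper: induction on well-definedness via Lemmas~\ref{lemma-nonsingular}--\ref{lemma-lb-epsilon}, monotonicity of $\psi$, and a contradiction argument at a putative accumulation point with $\psi^\star>0$ using the uniform step-size guarantee from Lemma~\ref{lemma-step-size}. The only cosmetic differences are that you spell out the base case and the positivity of $\epsilon^{k+1}$ more explicitly, and you lower bound $\rho^{\ell_{k_j}}$ by $\rho\bar\alpha$ whereas the paper uses $\rho^{\ell}$ for the smallest integer $\ell$ with $\rho^{\ell}\le\bar\alpha$; these are equivalent.
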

\begin{proof} See Appendix \ref{proof-global-convergence}. 
\end{proof}

\subsection{Superlinear convergence rate}
We {next establish} the superlinear convergence rate of Algorithm \ref{alg-ssnm} under certain regularity conditions. The following lemma {is} useful for characterizing the nonsingularity of an element in $\partial\hcE$ at any accumulation point.

\begin{lemma}
	\label{lemma-delta-ineq}
	Suppose that $ (\bar X, \bar y, \bar Z)\in \X $ is a KKT-point. Let $ \bar X - \bar Z:= \bar W \in \S^n $ and $\bar V\in \partial\Phi(0,\bar W)$. Then, for any $\Delta X$ and $\Delta Z$ in $\S^n$ such that $\Delta X = \bar V(0, \Delta X - \Delta Z)$, it holds that $\inprod{\Delta X}{\Delta Z} \leq \Gamma_{\bar X}(-\bar Z, \Delta X)$, where $ \Gamma_{\bar X} $ is defined as in \eqref{eq-Gamma-B}.
\end{lemma}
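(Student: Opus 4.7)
The plan is to exploit the simultaneous diagonalization of $\bar X$ and $\bar Z$ coming from complementarity, use the explicit block description of $\bar V$ from Lemma~\ref{lemma-partial-B-Phi}, and compare $\inprod{\Delta X}{\Delta Z}$ with $\Gamma_{\bar X}(-\bar Z,\Delta X)$ block by block. First, I would fix the spectral decomposition $\bar W = PDP^T$ with $d_1\geq \dots\geq d_n$ and the index sets $\alpha$, $\beta$, $\gamma$ of positive, zero, and negative eigenvalues. Because $(\bar X,\bar y, \bar Z)$ is a KKT point, $\bar X$ and $\bar Z$ commute, so in the $P$-basis $\bar X = P_\alpha\,\mathrm{diag}(d_i)_{i\in\alpha}\,P_\alpha^T$, $-\bar Z = P_\gamma\,\mathrm{diag}(d_i)_{i\in\gamma}\,P_\gamma^T$, and $\bar X^\dagger = P_\alpha\,\mathrm{diag}(d_i^{-1})_{i\in\alpha}\,P_\alpha^T$. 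Setting $\widetilde{\Delta X}:=P^T\Delta X\,P$, a direct expansion of $2\inprod{-\bar Z}{\Delta X\,\bar X^\dagger\Delta X}$ then yields
\[
\Gamma_{\bar X}(-\bar Z,\Delta X)\;=\;2\sum_{i\in \alpha,\,j\in \gamma}\frac{d_j}{d_i}\,\widetilde{\Delta X}_{ij}^{\,2},
\]
which will be the right-hand side to match.

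Next I would invoke Lemma~\ref{lemma-partial-B-Phi} (extended to $\partial\Phi(0,\bar W)$ via Carath\'eodory's theorem, exactly as in the proof of Lemma~\ref{lemma-V-PSD}) to describe the relation $\Delta X=\bar V(0,\Delta X-\Delta Z)$ block-wise. Setting $\tilde H:=P^T(\Delta X-\Delta Z)P$, this forces $\widetilde{\Delta X}_{\alpha\alpha}=\tilde H_{\alpha\alpha}$, $\widetilde{\Delta X}_{\alpha\beta}=\tilde H_{\alpha\beta}$, $\widetilde{\Delta X}_{\alpha\gamma}=[\Omega_0(d)]_{\alpha\gamma}\circ \tilde H_{\alpha\gamma}$, $\widetilde{\Delta X}_{\beta\beta}=V_{\abs{\beta}}(0,\tilde H_{\beta\beta})$ for some $V_{\abs{\beta}}\in \partial\Phi_{\abs{\beta}}(0,0)$, and $\widetilde{\Delta X}_{\beta\gamma}=\widetilde{\Delta X}_{\gamma\gamma}=0$. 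Using $\widetilde{\Delta Z}=\widetilde{\Delta X}-\tilde H$ and expanding $\inprod{\Delta X}{\Delta Z}=\inprod{\widetilde{\Delta X}}{\widetilde{\Delta Z}}$ block by block, every block other than $\alpha\gamma$ and $\beta\beta$ contributes zero because at least one of the two factors vanishes. For the $\alpha\gamma$ block, the identity $[\Omega_0(d)]_{ij}=d_i/(d_i-d_j)$ valid on $\alpha\times\gamma$ gives $\widetilde{\Delta Z}_{ij}=([\Omega_0(d)]_{ij}-1)\tilde H_{ij}=(d_j/d_i)\,\widetilde{\Delta X}_{ij}$, so the $\alpha\gamma$ contribution equals $2\sum_{i\in\alpha,\,j\in\gamma}(d_j/d_i)\,\widetilde{\Delta X}_{ij}^{\,2}$, which is exactly $\Gamma_{\bar X}(-\bar Z,\Delta X)$.

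The remaining $\beta\beta$ contribution is $\inprod{V_{\abs{\beta}}(0,\tilde H_{\beta\beta})}{V_{\abs{\beta}}(0,\tilde H_{\beta\beta})-\tilde H_{\beta\beta}}$; applying Lemma~\ref{lemma-V-PSD} to $\Phi_{\abs{\beta}}$ at the origin (where the corresponding $\alpha$ and $\gamma$ sets are empty and the lemma reduces to $\inprod{\tilde H_{\beta\beta}-V_{\abs{\beta}}(0,\tilde H_{\beta\beta})}{V_{\abs{\beta}}(0,\tilde H_{\beta\beta})}\geq 0$) shows this contribution is nonpositive, and combining it with the $\alpha\gamma$ identity yields the claimed inequality. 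The main obstacle is the clean identification of the $\alpha\gamma$ contribution with $\Gamma_{\bar X}(-\bar Z,\Delta X)$ as an exact equality rather than a mere bound; this requires writing $\bar X^\dagger$ carefully in the $P$-basis, tracking how the Hadamard factor $[\Omega_0(d)]_{\alpha\gamma}$ coming from $\bar V$ interacts with the diagonal entries of $\bar X$ and $-\bar Z$, and exploiting the elementary identity $[\Omega_0(d)]_{ij}-1=d_j/(d_i-d_j)$ on $\alpha\times\gamma$. Everything else is routine block bookkeeping together with a direct application of Lemma~\ref{lemma-V-PSD}.
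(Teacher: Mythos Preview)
Your proposal is correct and follows essentially the same route as the paper's proof: both pass to the $P$-basis, read off the block relations from Lemma~\ref{lemma-partial-B-Phi}, use Lemma~\ref{lemma-V-PSD} on the $\beta\beta$ block to show that contribution is nonpositive, and identify the $\alpha\gamma$ contribution $2\inprod{\widetilde{\Delta X}_{\alpha\gamma}}{\widetilde{\Delta Z}_{\alpha\gamma}}$ with $\Gamma_{\bar X}(-\bar Z,\Delta X)$ exactly. Your write-up is in fact slightly more explicit than the paper on the last point (the paper merely says ``simple calculations show''), and note that Lemma~\ref{lemma-partial-B-Phi} already covers the Clarke case directly, so the Carath\'eodory detour you mention is unnecessary.
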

\begin{proof} 
    See Appendix \ref{proof-delta-ineq}.
\end{proof}

Using the above lemma, we can establish the following equivalent relations.

\begin{proposition}
	\label{prop-nonsingular-cE}
	Let $ (\bar \epsilon, \bar X, \bar y, \bar Z) $ be such that $\hcE(\bar \epsilon, \bar X, \bar y, \bar Z) = 0$. Then the following statements are equivalent to each other.
	\begin{enumerate}
		\item The primal constraint nondegenerate condition holds at $\bar X$, and the dual constraint nondegenerate condition holds at $ (\bar y, \bar Z) $.
		\item Every element in $\partial_B\hcE(\bar \epsilon, \bar X, \bar y, \bar Z)$ is nonsingular.
		\item Every element in $\partial\hcE(\bar \epsilon, \bar X, \bar y, \bar Z)$ is nonsingular.
	\end{enumerate}
\end{proposition}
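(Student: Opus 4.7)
The plan is to deduce this equivalence from Theorem \ref{thm-pd-nonsingular} by transferring its nonsingularity statements for $\partial \cF$ over to $\partial \hcE$. The starting observation is that the first row of $\hcE$ being the scalar $\epsilon$ forces $\bar\epsilon = 0$, after which the remaining rows of $\hcE(\bar\epsilon, \bar X, \bar y, \bar Z) = 0$ collapse to $\cF(\bar X, \bar y, \bar Z) = 0$. Hence $(\bar X, \bar y, \bar Z)$ is a KKT point and Theorem \ref{thm-pd-nonsingular} is applicable. I will establish the cycle (1) $\Rightarrow$ (3) $\Rightarrow$ (2) $\Rightarrow$ (1).

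For (1) $\Rightarrow$ (3), take any $V \in \partial \hcE(0, \bar X, \bar y, \bar Z)$ and suppose $V(\Delta \epsilon, \Delta X, \Delta y, \Delta Z) = 0$. Because the first component of $\hcE$ is smooth and equal to $\epsilon$, the top row of $V$ is just the coordinate projection $\Delta \epsilon \mapsto \Delta \epsilon$, so $\Delta \epsilon = 0$. This kills every term of the remaining rows proportional to a subgradient of $|\cdot|$ at $0$, leaving the reduced system $\cA \Delta X = 0$, $\cA^* \Delta y + \Delta Z = 0$, $\Delta X - V_\Phi(0, \Delta X - \Delta Z) = 0$, where $V_\Phi$ denotes the $\Phi$-block of $V$. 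By comparing the $\tau = 0$ slice of the block description of $V_\Phi$ in Lemma \ref{lemma-partial-B-Phi} with the block description of elements of $\partial \Pi_{\S_+^n}(\bar X - \bar Z)$ in Lemma \ref{lemma-partial-Pi} and passing to convex hulls, the map $H \mapsto V_\Phi(0, H)$ belongs to $\partial \Pi_{\S_+^n}(\bar X - \bar Z)$. Consequently the reduced system reads $V_\cF(\Delta X, \Delta y, \Delta Z) = 0$ for some $V_\cF \in \partial \cF(\bar X, \bar y, \bar Z)$, and under (1) Theorem \ref{thm-pd-nonsingular} forces $(\Delta X, \Delta y, \Delta Z) = 0$.

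The implication (3) $\Rightarrow$ (2) is immediate from $\partial_B \hcE \subseteq \partial \hcE$. For (2) $\Rightarrow$ (1) I argue the contrapositive. If one of the nondegeneracy conditions fails, Theorem \ref{thm-pd-nonsingular} supplies a singular $V_\cF \in \partial_B \cF(\bar X, \bar y, \bar Z)$ built via Lemma \ref{lemma-partial-Pi} from some $V_0 \in \partial_B \Pi_{\S_+^n}(\bar X - \bar Z)$ together with a nonzero $(\Delta X, \Delta y, \Delta Z)$ annihilated by $V_\cF$. The correspondence \eqref{eq-V0-VH} lifts $V_0$ to some $V_\Phi \in \partial_B \Phi(0, \bar X - \bar Z)$ with $V_\Phi(0, H) = V_0(H)$ for all $H$. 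Using $V_\Phi$ together with the choice $v = 1$ for $\partial|\cdot|(0)$ — realized by a one-sided sequence $\epsilon^k \downarrow 0$ and compatible $(X^k, y^k, Z^k) \to (\bar X, \bar y, \bar Z)$ along which the smooth Jacobians $\hcE'(\epsilon^k, X^k, y^k, Z^k)$ converge to the target — I assemble $V \in \partial_B \hcE(0, \bar X, \bar y, \bar Z)$ with $V(0, \Delta X, \Delta y, \Delta Z) = 0$, contradicting (2).

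The main obstacle I anticipate is the step in (2) $\Rightarrow$ (1) where $V$ is declared to lie in $\partial_B \hcE$ rather than only in $\partial \hcE$: one must produce a single approximating sequence $(\epsilon^k, X^k, y^k, Z^k) \to (0, \bar X, \bar y, \bar Z)$ with $\epsilon^k \neq 0$ along which $\hcE$ is Fr\'echet differentiable and $\hcE'$ converges to $V$. This demands simultaneously realizing the limiting $\Omega$-matrix underlying $V_\Phi$, through compatible rates for $\epsilon^k$ and for the eigenvalues of $X^k - Z^k$, while also capturing the prescribed one-sided value of $|\epsilon|'$; this is a gluing argument of the same flavour as in Lemma \ref{lemma-partial-Gamma}. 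A secondary subtlety in (1) $\Rightarrow$ (3) is that at the $\partial_B$-level the Huber $\Omega$-set in Lemma \ref{lemma-partial-B-Phi} is strictly larger than the projector $\Omega_0$-set of Lemma \ref{lemma-partial-Pi}, so the identification $V_\Phi(0, \cdot) \in \partial \Pi_{\S_+^n}$ only becomes valid after invoking $\partial = \mathrm{conv}(\partial_B)$.
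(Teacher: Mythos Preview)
Your implications $(3)\Rightarrow(2)$ and $(2)\Rightarrow(1)$ match the paper's argument; the use of \eqref{eq-V0-VH} to embed $\partial_B\Pi_{\S_+^n}(\bar X-\bar Z)$ into $\partial_B\Phi(0,\bar X-\bar Z)$ and thereby transport singularity from $\partial_B\cF$ to $\partial_B\hcE$ is exactly what the paper does, and your ``gluing'' concern about realizing the assembled $V$ as a $B$-limit is a routine subsequence argument along $\epsilon^k\neq 0$.

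The genuine gap is in $(1)\Rightarrow(3)$. Your argument hinges on the assertion that for every $V_\Phi\in\partial\Phi(0,\bar X-\bar Z)$ the slice $H\mapsto V_\Phi(0,H)$ lies in $\partial\Pi_{\S_+^n}(\bar X-\bar Z)$, which via Lemmas \ref{lemma-partial-Pi} and \ref{lemma-partial-B-Phi} reduces to showing that every limit $\Omega_{|\beta|}=\lim_k\Omega(\epsilon^k,z^k)$ from the Huber family lies in the convex hull of the $\Omega_0$-limits. You flag this as a ``secondary subtlety'' and simply declare it ``becomes valid after invoking $\partial=\mathrm{conv}(\partial_B)$,'' but this containment is not proved anywhere in the paper and is far from automatic: the Huber divided-difference matrices $\Omega(\epsilon,z)$ have a genuinely richer limit set than $\Omega_0(z)$ because the extra parameter $\epsilon$ allows the diagonal entries $h_2'(\epsilon,z_i)$ to take arbitrary values in $[0,1]$ independently, whereas $\Omega_0$ has diagonal entries in $\{0,1\}$. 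Establishing the needed inclusion after convex-hulling (over \emph{maps}, with varying orthogonal $U$, not just over $\Omega$-matrices) would itself be a nontrivial lemma.

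The paper deliberately sidesteps this question. Its proof of $(1)\Rightarrow(3)$ does not attempt to identify $V_\Phi(0,\cdot)$ with an element of $\partial\Pi_{\S_+^n}$ at all. Instead it uses that primal nondegeneracy forces $\cM(\bar X)=\{(\bar y,\bar Z)\}$, invokes Lemma \ref{lemma-sosc-dual-non} to convert dual nondegeneracy into the strong second-order sufficient condition, and then applies Lemma \ref{lemma-delta-ineq} (whose proof rests on the inequality $\langle H-V(0,H),V(0,H)\rangle\ge 0$ of Lemma \ref{lemma-V-PSD}) to derive $\Gamma_{\bar X}(-\bar Z,\Delta X)\ge\langle\Delta X,\Delta Z\rangle=0$, contradicting the SSOSC unless $\Delta X=0$. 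The point is that Lemma \ref{lemma-V-PSD} extracts from $V_\Phi(0,\cdot)$ exactly the one operator inequality needed, without ever claiming membership in $\partial\Pi_{\S_+^n}$. Your outline makes no use of Lemmas \ref{lemma-V-PSD}, \ref{lemma-delta-ineq}, or \ref{lemma-sosc-dual-non}, and without either proving your inclusion claim or switching to the paper's SSOSC route, the implication $(1)\Rightarrow(3)$ is not established.
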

\begin{proof}
    See Appendix \ref{proof-nonsingular-cE}.	
\end{proof}

Typically, under certain {regularity} conditions, one is able to establish the local fast convergence rate of Newton-type methods. Indeed, we show in the next theorem that Algorithm \ref{alg-ssnm} admits a superlinear convergence rate under the primal and dual constraint nondegenerate conditions.

\begin{theorem}
	\label{thm-superlinear-convergence}
	Let $(\bar \epsilon, \bar X, \bar y, \bar Z) $ be an accumulation point of the infinite sequence $\{(\epsilon^k, X^k, y^k, Z^k)\} $ generated by Algorithm \ref{alg-ssnm}. Suppose that the primal constraint {nondegeneracy} condition holds at $\bar X$, and the dual constraint {nondegeneracy} condition holds at $ (\bar y, \bar Z) $. Then, the whole sequence $\{(\epsilon^k, X^k, y^k, Z^k)\} $ converges to $ (\bar \epsilon, \bar X, \bar y, \bar Z) $ superlinearly, i.e.,
	\[
		\norm{(\epsilon^{k+1} - \bar \epsilon, X^{k+1} - \bar X, y^{k+1} - \bar y, Z^{k+1} - \bar Z)} =  O\left(\norm{(\epsilon^{k} - \bar \epsilon, X^k - \bar X, y^{k} - \bar y, Z^{k} - \bar Z)}^{1+\tau}\right),
	\]
	where $\tau \in (0,1]$ controls how accurately one should solve the Newton system in \eqref{eq-alg-direction}.
\end{theorem}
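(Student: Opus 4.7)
The plan is to combine the nonsingularity of every $V \in \partial \hcE(\bar \epsilon, \bar X, \bar y, \bar Z)$, furnished by Proposition \ref{prop-nonsingular-cE} (noting that $\hcE(\bar \epsilon, \bar X, \bar y, \bar Z) = 0$ by Theorem \ref{thm-global-convergence}), with the strong semismoothness of $\hcE$ at $\bar w := (\bar \epsilon, \bar X, \bar y, \bar Z)$, which is inherited from Proposition \ref{prop-huber-properties}. Since the set of invertible linear operators is open and $\partial \hcE$ is upper semicontinuous, there exist an open neighborhood $\cV$ of $\bar w$ and $\kappa > 0$ such that every $V \in \partial \hcE(w)$ with $w \in \cV$ is nonsingular and $\norm{V^{-1}} \leq \kappa$. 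Local Lipschitz continuity of $\hcE$ together with $\hcE(\bar w) = 0$ shows that $\norm{\hcE(w)}$ is of the order of $\norm{w - \bar w}$ near $\bar w$, so $\zeta(w) = O(\norm{w - \bar w}^{1+\tau})$ and $\eta(w) = O(\norm{w - \bar w}^{\tau})$.

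First I would establish a one-step contraction on $\cV$. For $w^k \in \cV$ with $\epsilon^k > 0$ the inexact Newton step $\bDelta^k$ satisfies $\hcE'(w^k)\bDelta^k = -\hcE(w^k) + (\zeta_k\hat\epsilon,\,\cR_k)$ with $\hcE'(w^k) \in \partial \hcE(w^k)$. Using local Lipschitz continuity I would bound $\norm{\cE(w^k) + \cE_\epsilon'(w^k)\Delta\epsilon^k} = O(\norm{w^k - \bar w})$, so the first inexactness condition in \eqref{eq-alg-inexactness} sharpens to $\norm{\cR_k} = O(\norm{w^k - \bar w}^{1+\tau})$, and by definition $\zeta_k\hat\epsilon = O(\norm{w^k - \bar w}^{1+\tau})$. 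Combining with the strong-semismooth bound $\norm{\hcE(w^k) - \hcE'(w^k)(w^k - \bar w)} = O(\norm{w^k - \bar w}^2)$ and inverting $\hcE'(w^k)$ with $\norm{\hcE'(w^k)^{-1}} \leq \kappa$ delivers
\[
\norm{w^k + \bDelta^k - \bar w} = O\bigl(\norm{w^k - \bar w}^{1+\tau}\bigr).
\]

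Next I would verify that the unit step is eventually accepted and that the whole sequence (not just the subsequence converging to $\bar w$) converges to $\bar w$. Local Lipschitz continuity of $\hcE$ transfers the displayed bound to $\psi(w^k + \bDelta^k) = O(\norm{w^k - \bar w}^{2(1+\tau)}) = o(\psi(w^k))$, so for $k$ large enough the Armijo test $\psi(w^k + \bDelta^k) \leq [1 - 2\sigma(1-\delta)]\psi(w^k)$ holds, forcing $\ell_k = 0$ and $w^{k+1} = w^k + \bDelta^k$. The same bound yields $\norm{w^{k+1} - \bar w} \leq \tfrac{1}{2}\norm{w^k - \bar w}$ eventually, so the existence of a subsequence converging to $\bar w$ traps the entire tail inside $\cV$; iterating the per-step estimate then gives the claimed superlinear rate of order $1+\tau$.

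The main obstacle is the joint use of the two inexactness conditions in \eqref{eq-alg-inexactness}: the $\hat\eta$-bound is needed globally to make the descent analysis of Section \ref{smooth-newton} go through, whereas the sharper $\eta_k$-bound is what promotes the naive linear rate to $1+\tau$, and extracting the latter requires the careful bookkeeping of $\Delta\epsilon^k = -\epsilon^k + \zeta_k\hat\epsilon$ and of the local Lipschitz constant of $\cE$ at $\bar w$. A secondary delicate point is making sure that $\epsilon^k$ stays strictly positive along the tail so that the Newton equation is always well-defined; this is already guaranteed by Lemma \ref{lemma-lb-epsilon} since $(\epsilon^k, X^k, y^k, Z^k) \in \cN$ throughout and $\zeta_k > 0$ as long as $\hcE(w^k) \neq 0$.
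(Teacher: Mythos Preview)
Your proposal is correct and follows essentially the same route as the paper: invoke Theorem \ref{thm-global-convergence} and Proposition \ref{prop-nonsingular-cE} to obtain uniform invertibility of the Jacobians near $\bar w$, use the $\eta_k$-inexactness bound together with $|\Delta\epsilon^k|\le\epsilon^k+\zeta_k\hat\epsilon=O(\norm{\hcE_k})$ to show $\norm{\cR_k}=O(\norm{w^k-\bar w}^{1+\tau})$, combine with strong semismoothness of $\hcE$ to get the one-step contraction $\norm{w^k+\bDelta^k-\bar w}=O(\norm{w^k-\bar w}^{1+\tau})$, and then deduce eventual acceptance of the unit step. One small point to tighten: when you write that ``$\norm{\hcE(w)}$ is of the order of $\norm{w-\bar w}$'', local Lipschitz continuity only gives the upper bound; the lower bound $\norm{w^k-\bar w}=O(\norm{\hcE_k})$ (needed to pass from $O(\norm{w^k-\bar w}^{2(1+\tau)})$ to $o(\psi(w^k))$) requires the uniform inverse bound $\norm{\hcE'(w^k)^{-1}}\le\kappa$ together with the semismooth estimate, exactly as the paper records in \eqref{eq-thm-local-5}.
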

\begin{proof}
    See Appendix \ref{proof-superlinear-convergence}. 
\end{proof}

\blue{
Analyzing the fast convergence rate of {smoothing Newton type methods for solving SDPs has been 
a continuing research direction} for decades. Early works (see, e.g., \cite{chen2003analysis, chen2003non, sun2004squared}) assumed the strict complementarity (i.e., $\bar X + \bar Z \in\mathbb{S}_{++}^n$) and primal-dual nondegeneracy conditions for establishing the local fast convergence rate. Later works \cite{kanzow2005quadratic,chan2008constraint}, attempted to drop the strict complementarity and establish the fast convergence rate merely under nondegeneracy conditions for smoothing Newton methods via the minimum smoothing function and the CHKS-smoothing function, respectively. Following the same research theme as \cite{kanzow2005quadratic,chan2008constraint}, our work establishes similar convergence properties for the Huber-based squared smoothing Newton method under the same nondegeneracy conditions without assuming the strict complementarity condition.
}

\begin{remark}[Convergence Properties of IPMs] \label{remark-conv-ipm}
	Here, we shall provide some comparisons on the singularity of the (generalized) Jacobian matrices arising from IPMs and those from the proposed smoothing Newton methods. Given an interior-point iterate $(X,y,Z)$ that is close to some KKT point $(X^*,y^*,Z^*)$ and denote $\mu:=\inprod{X}{Z}/n$. If $(X^*,Z^*)$ satisfies the strict complementarity condition (i.e., $X^*+Z^*\succ 0$) and the primal and dual nondegeneracy conditions hold at the KKT point, it is known that the Jacobian matrix of the KKT conditions has a bounded condition number even when $\mu > 0$ approaches 0 \cite{alizadeh1998primal}. Under the same conditions, the IPM with the so-called AHO direction is shown to have a superlinear or quadratic convergence rate \cite{alizadeh1998primal}. The key difference between our results and those for IPMs is that we do not need to assume the strict complementarity holds.
\end{remark}

\subsection{Numerical implementation} \label{subsection-implementaion}

{Given $\nu > 0$, one can check that the condition $X - \Pi_{\S_+^n}(X - Z) = 0$ is equivalent to the condition $ X - \Pi_{\S_+^n}(X - \nu Z) = 0 $. Thus in our implementation, we in fact solve the following nonlinear system (with a slight abuse of the notation $ \cE $):
\[
	\cE(\epsilon, X, y, Z) =
	\begin{pmatrix}
		\cA X {+ \kappa_p \abs{\epsilon}y  - b}  \\
		-\cA^* y - Z  + C \\
		(1+\kappa_c \abs{\epsilon})X - \Phi(\epsilon, X- \nu Z)
	\end{pmatrix},
	\quad \forall (\epsilon, X, y, Z)\in \R\times \X.
\]
Our numerical experience shows that introducing the parameter $\nu$ is important as it balances the norms of $X$ and $Z$, and thus improving the performance of the algorithm. However, the convergence properties established in the previous sections are not affected.
}

Note that one of the key {computational} tasks in Algorithm \ref{alg-ssnm} is to solve a system of linear equations for computing the search direction at each iteration. Here, we briefly explain how we solve the linear system of the following form:
\begin{equation}
	\label{eq-linsys-num}
	\begin{pmatrix}
		\cA & \mu_p I_m & 0 \\
		0 & -\cA^* & -\cI \\
		(1+\mu_c)\cI-V & 0 & {\nu} V
	\end{pmatrix}
	\begin{pmatrix}
		\Delta X \\ \Delta y \\ \Delta Z
	\end{pmatrix} =
	\begin{pmatrix}
		R_1 \\ R_2 \\ R_3
	\end{pmatrix},
\end{equation}
where $\mu_p := \kappa_p\abs{\epsilon} > 0$, $ \mu_c := \kappa_c\abs{\epsilon} > 0 $, $V {:=} \Phi'_2(\epsilon, X-{\nu}Z)$, $\cI$ is the identity mapping over $\S^n$ and $(R_1,R_2,R_3)\in \R^m\times \S^n \times \S^n$ is the right-hand-side vector constructed from the current iterate $(X,y,Z)\in \X$.  From the last two equations of \eqref{eq-linsys-num}, we have
\[
\Delta Z = -\cA^*\Delta y - R_2,\quad \Delta X = \left[(1+\mu_c)\cI - V\right]^{-1}(R_3 - {\nu} V\Delta Z),
\]
which implies that $\Delta X = \left[(1+\mu_c)\cI - V\right]^{-1}(R_3 + {\nu}V\cA^*\Delta y + {\nu}VR_2)$. Substituting this equality into the first equation in \eqref{eq-linsys-num}, we get
\[
\cA \left[(1+\mu_c)\cI - V\right]^{-1}(R_3 + {\nu}V\cA^*\Delta y + {\nu}VR_2) + \mu_p \Delta y = R_1,
\]
{which leads to the following smaller $ m\times m $} symmetric positive definite system:
\begin{equation}
	\left( \mu_p I_m + {\nu}\cA \left[(1+\mu_c)\cI - V\right]^{-1} V \cA^*\right) \Delta y =  R_1 - \cA \left[(1+\mu_c)\cI - V\right]^{-1}({\nu}VR_2 + R_3).
\label{eq-schur}
\end{equation}
We then apply the preconditioned conjugate gradient (PCG) method with a simple approximated diagonal preconditioner to solve the last linear system to get $\Delta y$. After obtaining $ \Delta y $, we can compute $\Delta X $ and $\Delta Z$ in terms of $\Delta y$. In our experiments, we set $ \kappa_p $ to be a small number, say $ \kappa_p = 10^{-10} $, while $ \kappa_c $ should be {larger} and may be changed for different classes of problems. Note that when $\epsilon$ is small, the coefficient matrix in \eqref{eq-schur} can also be highly ill-conditioned, which is a critical issue that also arises in IPMs and ALMs (when the primal nondegeneracy condition is not satisfied). To alleviate this issue, an effective preconditioner is needed. However, 
to focus on the smoothing Newton method itself, we refrain from diving into this direction and leave it for future research.

Next, we shall see how to evaluate the matrix-vector products involving $ \left[(1+\mu_c)\cI - V\right]^{-1} V $ which are needed {when solving \eqref{eq-schur}}. Suppose that $(\epsilon, X, y, Z)$ with $\epsilon > 0$ is given and that $W:= X-\nu Z$ has the spectral decomposition in \eqref{eq-eig-W}. Then, the linear mapping $V:\S^n\rightarrow\S^n$ takes the following form: $ V(H) = P\left[ \Omega(\epsilon, d) \circ \left(P^THP\right)\right] P^T$, $  \forall H\in \S^n  $, where $\Omega(\epsilon, d)\in \S^n$ is defined in \eqref{eq-first-diff} and $[\Omega(\epsilon, d)]_{ij}\in [0,1]$, for $1\leq i, j\leq n$.
{Consider the following} three index sets: $\alpha:= \left\{ i\;:\; d_i\geq \epsilon\right\}$, $\beta:= \left\{ i\;:\; 0 < d_i < \epsilon\right\}$, and $\gamma:= \left\{ i\;:\; d_i\leq 0\right\}$.
%\[
%	\alpha:= \left\{ i\;:\; d_i\geq \epsilon\right\},\quad \beta:= \left\{ i\;:\; 0 < d_i < \epsilon\right\},\quad \gamma:= \left\{ i\;:\; d_i\leq 0\right\}.
%\]
Then, we can simply write $ \Omega(\epsilon, d) $ as
\[
	\Omega(\epsilon, d) = \begin{pmatrix}
		E_{\alpha\alpha} & \Omega_{\alpha\beta} & \Omega_{\alpha\gamma} \\
		 \Omega_{\alpha\beta}^T & \Omega_{\beta\beta} & \blue{\Omega_{\beta\gamma}} \\
		 \Omega_{\alpha\gamma}^T & \blue{\Omega_{\beta\gamma}^T} & 0
	\end{pmatrix},
\]
where $E_{\alpha\alpha}\in \R^{\abs{\alpha}\times \abs{\alpha}}$ is the matrix of all ones, and $\Omega_{\alpha\beta}\in \R^{\abs{\alpha}\times \abs{\beta}}$, $\Omega_{\beta\beta}\in \R^{\abs{\beta}\times \abs{\beta}}$, $\Omega_{\alpha\gamma}\in \R^{\abs{\alpha}\times \abs{\gamma}}$ and $\blue{\Omega_{\beta\gamma}\in \R^{\abs{\beta}\times \abs{\gamma}}}$ with all their entries belonging to the interval {$ (0,1) $.} We also partition the orthogonal matrix $P$ as $P = \begin{pmatrix}
	P_\alpha & P_\beta & P_\gamma
\end{pmatrix}$ accordingly. Define the matrix $ \hat\Omega\in \S^n$ as $ [\hat \Omega]_{ij}:= [\Omega(\epsilon,d)]_{ij}/(1 + \mu_c - [\Omega(\epsilon,d)]_{ij}) $ for $i, j = 1,\dots, n$, which takes the following form
\[
	\hat \Omega = \begin{pmatrix}
		\frac{1}{\mu_c} E_{\alpha\alpha} & \hat \Omega_{\alpha\beta} & \hat \Omega_{\alpha\gamma} \\
		\hat \Omega_{\alpha\beta}^T & \hat \Omega_{\beta\beta} & \blue{\hat \Omega_{\beta\gamma}} \\
		\hat \Omega_{\alpha\gamma}^T & \blue{\hat\Omega_{\beta\gamma}^T} & 0
	\end{pmatrix},
\]
Then one can check that $ \left[(1+\mu_c)\cI - V\right]^{-1} V (H) = P\left[ \hat \Omega \circ \left(P^THP\right)\right] P^T $, $\forall H\in \S^n$. If $\abs{\alpha} + \abs{\beta} \ll n$, one may use the following scheme to compute
\[
	\begin{aligned}
		\left[(1+\mu_c)\cI - V\right]^{-1} V (H) =  &\; \frac{1}{\mu_c}P_\alpha\big(P_\alpha^THP_\alpha\big)P_\alpha^T + P_\beta\big(\hat\Omega_{\beta\beta}\circ (P_\beta^T HP_\beta)\big)P_\beta^T \\
		&\; + P_\alpha\big(\hat \Omega_{\alpha\beta}\circ (P_\alpha^THP_\beta)\big)P_\beta^T
+  {\Big(P_\alpha\big(\hat \Omega_{\alpha\beta}\circ (P_\alpha^THP_\beta)\big)P_\beta^T \Big)^T}
%+  P_\beta\left(\hat \Omega_{\alpha\beta}^T\circ (P_\beta^THP_\alpha)\right)P_\alpha^T
 \\
		&\; + P_\alpha\big(\hat \Omega_{\alpha\gamma}\circ (P_\alpha^THP_\gamma)\big)P_\gamma^T
+ \Big(P_\alpha\big(\hat \Omega_{\alpha\gamma}\circ (P_\alpha^THP_\gamma)\big)P_\gamma^T \Big)^T \\
&\; + \blue{P_\beta\big(\hat \Omega_{\beta\gamma}\circ (P_\beta^THP_\gamma)\big)P_\gamma^T
+ \Big(P_\beta\big(\hat \Omega_{\beta\gamma}\circ (P_\beta^THP_\gamma)\big)P_\gamma^T \Big)^T}.
%+  P_\gamma\left(\hat \Omega_{\alpha\gamma}^T\circ (P_\gamma^THP_\alpha)\right)P_\alpha^T.
	\end{aligned}
\]
On the other hand, if $n - \abs{\alpha} \ll n$, one may consider using the following scheme
to compute
\[
	\begin{aligned}
		\left[(1+\mu_c)\cI - V\right]^{-1} V (H) = &\; \frac{1}{\mu_c}H -  P\left[ \tilde \Omega \circ \left(P^THP\right)\right] P^T ,
	\end{aligned}
\]
where $\tilde \Omega := \frac{1}{\mu_c}E - \hat \Omega$ would have more zero entries than $\hat\Omega$. As a consequence, since the Huber function maps any non-positive number to zero, we can exploit the sparsity structure in $\hat \Omega$ or {$\tilde \Omega$} to cut down the computational cost. However, if the CHKS-smoothing function is used, we will get a dense counterpart and the aforementioned sparsity structure will be lost. Theoretically, both Huber-based and CHKS-based smoothing Newton methods share the same convergence properties. From our numerical experience, the practical performance of both methods in terms of the total number of CG iterations are also similar. Thus, reducing the computational costs in matrix-vector multiplications definitely makes the Huber-based method more efficient than the CHKS-based method. This also explains why we choose the Huber function instead of the CHKS function to perform the smoothing of the projection operator. 
%However, the selection of hyper-parameters for the two methods may differ significantly and this may lead to different practical performance. 
%Since our main goal is to propose the Huber-based smoothing Newton method for solving %SDP, a comprehensive comparison of the practical performance between the Huber function %and other smoothing functions (including the CHKS smoothing function) is out of the %scope of this paper but a topic for future investigation.
Other than the CHKS smoothing function, one can also compare the practical performance of the Huber function with other smoothing functions, but we leave it as a topic for future investigation.

When compared with IPMs, the proposed algorithm demonstrates some advantages from the computational perspective. To see this, let $\mu = \inprod{X}{Z}/n$ be defined as in Remark \ref{remark-conv-ipm}. It is known that an IPM typically computes the search direction in each iteration via solving an $m\times m$ linear system whose coefficient matrix is defined as:
\[
\mathcal{M}_{\rm IPM}h:= \mathcal{A}P(D\circ (P^T(\mathcal{A}^*h)P))P^T,\quad \forall h\in \mathbb{R}^m,
\]
where $P\in \mathbb{R}^{n\times n}$ and $D\in \mathbb{R}_{++}^{n\times n}$ are fully dense matrices depending on the iterate $(X,Z)$ and hence also $\mu$ \cite{toh2002solving}. In general, the matrix $\mathcal{M}_{\rm IPM}$ is asymptotically singular when $\mu\to 0$, i.e., the condition number of $\mathcal{M}_{\rm IPM}$ will grow to infinity as $\mu\to 0$, even under strict complementarity, and primal and dual nondegeneracy conditions \cite{alizadeh1998primal}. Moreover, since $P$ and $D$ are fully dense, it is clear that computing the matrix-vector product involving $\mathcal{M}_{\rm IPM}$ would cost at least $8n^3$ arithmetic operations. Similar to IPMs, the $m \times m$ linear system \eqref{eq-schur} that is used to compute the search direction in the smoothing Newton method can also be asymptotically singular when $\epsilon\to 0$. The key difference is that the perturbation terms $\kappa_p|\epsilon|y$ and $\kappa_c|\epsilon|X$ introduced when constructing the smoothing function of $\mathcal{F}$ help to improve the conditioning of the underlying coefficient matrix. Moreover, from the view point of computing the matrix-vector product, we emphasize that the coefficient matrix in \eqref{eq-schur} can exploit the low-rank (or high-rank) property of the matrix $X-\nu Z$ as mentioned above. For example, if $r:=|\alpha|+|\beta| \ll n$, then the matrix-vector product only costs $O(n^2r)$ arithmetic operations, which could be much cheaper than that of an IPM iteration.

We observe from our numerical tests that when the dual iterates $(y^k, Z^k)$ does not make {a} significant progress, it is helpful for us to project the primal iterate $X^k$ onto the affine subspace ${\cH_k}:=\left\{X\in S^n: \cA(X) = b, \inprod{C}{X} = \inprod{b}{{y^k}}\right\}$. To perform such a projection operation, we only need to perform a Cholesky factorization for a certain operator (depending only on $\cA$ and $C$) once at the beginning of the Algorithm \ref{alg-ssnm}. However, in the case when the factorization fails (i.e., the operator is no positive definite) or $m$ is too large, we will not perform such projections.

%\fbox{\red{Should we present the details on matrix-vector multiplications to show
%why Huber smoothing is need?}}

\section{Numerical experiments}
\label{numerical-exp}
To evaluate the practical performance of Algorithm \ref{alg-ssnm} described in the last section, we conduct numerical experiments {to} solve various classes of SDPs that are commonly {tested} in the literature. 
%We will only compare our proposed algorithm with the general purposed solver {\tt {\tt SDPNAL+}} \cite{yang2015sdpnal} \footnote{}. 

\subsection{Experimental settings and implementation}
Similar to \cite{yang2015sdpnal}, the following relative KKT residues are used as the termination criteria of our algorithm:
\[
\eta_p := \frac{\norm{\cA X - b}}{1+\norm{b}},\quad \eta_d := \frac{\norm{\cA^*y + Z - C}}{1+\norm{C}},\quad \eta_c := \frac{\norm{X - \Pi_{\S_+^n}(X-Z)}}{1+\norm{X} + \norm{Z}}.
\]
Particularly, we terminate the algorithm as long as $\eta_{KKT}:= \max\{\eta_p,\eta_d,\eta_c\} \leq {\tt tol}$ where $ {\tt tol} > 0$ is a given tolerance. Moreover, denote the {maximum} number of iterations of Algorithm \ref{alg-ssnm} as {\tt maxiter}. We also stop the algorithm when the iteration count reaches this number. In our experiments, we set $ {\tt tol} = 10^{-6} $ and $ {\tt maxiter} = 50 $. %Note that the same stopping tolerance is used for {\tt {\tt SDPNAL+}}.

For more efficiency, we {apply} a certain first-order method to generate a starting point $ (X^0, y^0, Z^0) $ {to warmstart Algorithm \ref{alg-ssnm}}. Our choice is the routine based on a semi-proximal ADMM method \cite{yang2015sdpnal}. The stopping tolerance (in terms of the maximal relative KKT residual, i.e., $\eta_{KKT}$) and the maximum number of iterations for the warmstarting phase are denoted by $ {\tt tol}_0 $ and $ {\tt maxiter}_0 $, respectively. In our experiments, we set \blue{$ {\tt maxiter}_0 = 1000 $}, and the computational time spent in  the warmstarting phase will be included in the total computational time. We also notice that the performance of Algorithm \ref{alg-ssnm} depends sensitively on the choice of $ {\tt tol}_0 $. Hence, we set the value of $ {\tt tol}_0 $ differently for different classes of SDPs for more efficiency.

For the parameters required in Algorithm \ref{alg-ssnm}, we set $ r = \hat r = 0.6 $, $ \eta = \tau = 0.2 $, $ \rho = 0.5 $ and $ \sigma = 10^{-8} $. However, since the initial smoothing parameter $ \hat \epsilon > 0 $ affects the performance of Algorithm \ref{alg-ssnm}, it is chosen differently for different classes of problems.

\blue{
The baseline solvers to be compared with are \texttt{SDPLR} \cite{burer2003nonlinear}, \texttt{ManiSDP} \cite{wang2023solving}, and {\tt SDPNAL+} \cite{yang2015sdpnal} \footnote{Based on our numerical experience, interior point methods such as Mosek become inefficient and/or incapable to handle SDPs with large $m$, say, $m\geq 10,000$, and first-order methods are typically quite slow to reach our targeted accuracy. Moreover, to the best of our knowledge, there is no publicly available implementation for the CHKS-based smoothing Newton method.}. We use their default settings but only set the stopping tolerance to be $10^{-6}$. 
}

Finally, we should mention that our algorithm is implemented in MATLAB \blue{(R2023b) and all the numerical experiments are conducted on a Windows PC having 13th Gen Intel cores (i7-13700K) and 64GB of RAMs.}

\subsection{Testing examples}
\label{examples}
\begin{example}[MaxCut-SDP]
	\label{eg-maxcut}
	Consider the SDP relaxation \cite{goemans1995improved} of the maximum cut problem {of a graph} which takes the form of
	\begin{equation*}%\label{eg-maxcut-sdp}
		\min_{X\in \S^n}\; \inprod{C}{X} \quad \st \quad \mathrm{diag}(X) = {e}, \; X\in \S_+^n,
	\end{equation*}
	where ${e}\in \R^n$ denotes the vector of all ones and $C:= -(\mathrm{diag}(We_n) - W)/4$ {with $W$ being the weighted adjacency matrix of the underlying graph}. The above SDP problem has been a commonly used test problem for evaluating the performance of different SDP solvers. {A popular} data set for this problem is the GSET collection of randomly generated graphs. The GSET is available at: \url{https://web.stanford.edu/~yyye/yyye/Gset/}.
\end{example}

\begin{example}[Theta-SDP]
	\label{eg-theta}
	Let $G = (V, E)$ be a graph with $ n $ nodes $V$ and edges $E$. A stable set of $G$ is a subset of $V$ containing {no adjacent nodes}. The stability number $\alpha(G)$ is the cardinality of a maximal stable set of $G$. More precisely, it holds that
	\[
	\alpha(G) := \max_{x\in \R^n} \;\left\{ e^Tx : x_ix_j = 0, (i,j) \in E, x\in \{0,1\}^n\right\}.
	\]
	{However}, computing $ \alpha(G) $ is difficult. A notable lower bound of $ \alpha(G) $ is called the Lov\'{a}sz theta number \cite{lovasz1979shannon} which is defined as
	\[
	\theta(G) := \max_{X\in \S^n} \; \inprod{ee^T}{X} \quad \st \quad \inprod{E_{ij}}{X} = 0, (i,j)\in E,\; \inprod{I}{X} = 1,\; X\in \S_+^n,
	\]
	where $E_{ij} = e_ie_j^T + e_je_i^T\in \S^n$. {For our experiments, the test} data sets are chosen from \cite[Section 6.3]{zhao2010newton}.
\end{example}
%\fbox{\red{What about the results for the instances {\tt 1tc.512}, {\tt 1et.512}, etc?}}
%\fbox{\red{Can we just report on instances with $n \geq 200$ but remove those with
%small $n<200$?}}

\begin{example}[BIQ-SDP]
	\label{eg-biq}
	The NP-hard binary integer quadratic programming  (BIQ) problem has the following form:
	\[
	\min_{x\in \R^n} \; \frac{1}{2}\inprod{x}{Qx} + \inprod{c}{x}\quad \st\quad x\in \{0,1\}^n.
	\]
	It has many important practical applications due to its modeling power in representing the structure of graphs; see for example \cite{kochenberger2014unconstrained} for a recent survey. Under some mild conditions, Burer showed in \cite{burer2009copositive} the BIQ problem can be reformulated as a completely positive conic programming problem. Since the completely positive cone is numerically intractable, we consider its SDP relaxation:
	\[
	\min_{X\in \S^n, x\in \R^n, \alpha \in \R}\; \frac{1}{2}\inprod{Q}{X} + \inprod{c}{x} \quad \st\quad \mathrm{diag}(X) = x,\; \alpha = 1, \begin{pmatrix}
		X & x^T \\ x & \alpha
	\end{pmatrix} \in \S_+^{n+1}.
	\]
	In our experiments, the matrix $Q$ and the vector $c$ is obtained from the BIQMAC library \cite{wiegele2007biq}.
\end{example}

\begin{example}[Clustering-SDP]
	\label{eg-rcp}
	The clustering problem {is to group} a set of data points into several clusters. The problem is in general NP-hard. According to \cite{peng2007approximating}, we can consider the following SDP relaxation of the clustering problem:
	\[
	\min_{X\in \S^n}\; -\inprod{W}{X}\quad \st\quad Xe_n = e_n, \; \inprod{I}{X} = k,\; X\in \S_+^n,
	\]
	where $W$ is the affinity matrix whose entries represent the {pairwise} similarities of the objects in the input data set. In our experiments, {the test} data sets are obtained {from} the UCI Machine Learning Repository: \url{http://archive.ics.uci.edu/ml/datasets.html}. For more information on how we generate {the test} SDP problems from the raw input data sets, readers are \blue{referred} to \cite{yang2015sdpnal}.
\end{example}
%\fbox{\red{Can we test on more instances with say $n \geq 300$ but remove those with
%small $n<300$?}}

\begin{example}[Tensor-SDP]
	\label{eg-tensor}
	Consider the {following} SDP relaxation {of a} rank-1 tensor {approximation problem} \cite{nie2014semidefinite}:
	\[
		\max_{y\in \R^{\N_m^n}} \; \inprod{f}{y} \quad \st\quad M(y)\in \S_+^n, \;\inprod{g}{y} = 1,
	\]
	where $\N_m^n:=\left\{t = (t_1,\dots, t_n)\in \N^n: t_1+\dots+t_n = m\right\}$ and
	{$ M(y) $ is a linear pencil in $ y $.} The dual of the above problem is given by
	\[
		\min_{\gamma\in \R, X\in \S^n}\; \gamma \quad \st \quad \gamma g - f = M^*(X), \;X\in \S^n_+,
	\]
	{where $ M^* $ is the adjoint of $ M $. The above problem} can be transformed into a standard SDP; see \cite{yang2015sdpnal} for more details.
\end{example}
%\fbox{\red{Can we just report on instances with $n \geq 200$ but remove those with
%small $n<200$?}}

\subsection{Computational results}\label{comp-results}
In this subsection, we present our numerical results for each {class of SDP problems} in detail. We use tables to summarize our results. We report the sizes (i.e., $m$ and $n$) of the tested problems, number of iterations, total computational times (i.e., {\tt cpu}) in seconds, relative KKT residues (i.e., $ \eta_p $, $ \eta_d $ and $ \eta_c $), and objective function values (i.e., $ \inprod{C}{X} $). \blue{In particular, under the column labeled ''\texttt{Problem}'', we first present the name of the tested problem, followed by the matrix dimension $n$ and the number of linear constraints $m$, respectively. For the number of iterations associated with \texttt{SDPNAL+} and Algorithm \ref{alg-ssnm}, the first entry represents the number of iterations for the warmstarting phase, the second  is the number of main iterations, and the last  is the total number of PCG iterations needed for solving the linear systems when computing the search directions. For \texttt{SDPLR}, the first number stands for the total number of the ALM iterations and the second number is the total number of iterations of the limited-memory BFGS method that is applied to solve the (nonconvex) ALM subproblems. Last, for \texttt{ManiSDP}, the first item represents the number of iterations for the outer loop, the second  is the iteration counts for the Riemannian trust-region method used in \texttt{Manopt} \cite{manopt}, and the third number is the total number of matrix-vector multiplications. Moreover, KKT residues that are larger than $10^{-5}$ are highlighted in bold text.} As mentioned before, since the performance of our algorithm may depend sensitively on the choices of parameters $ {\tt tol}_0 $, $ \hat \epsilon $ and $ \kappa_c $, we also report the values for these parameters in the captions of {the} presented tables. The computational results for Example \ref{eg-maxcut}--\ref{eg-tensor} are presented in Table \ref{table-maxcut}--\ref{table-tensor}, respectively.

From the results in Tables \ref{table-maxcut}--\ref{table-rcp}, we observe that our proposed algorithm performs better than {\tt SDPNAL+}. In fact, Algorithm \ref{alg-ssnm} is usually several times more efficient than {\tt SDPNAL+}. These results have indeed shown that Algorithm \ref{alg-ssnm} is efficient. In \blue{terms} of accuracy, we see that both algorithms are able to compute optimal solutions with $\eta < 10^{-6}$ for almost all the tested problems, which further shows that both algorithms are robust and suitable for solving SDPs relatively accurately. We observe that for the SDPs in Tables \ref{table-maxcut}--\ref{table-rcp}, their optimal solutions are usually of very low-rank. Consequently, the primal constraint nondegenerate condition usually fails to hold, and {\tt SDPNAL+} requires more computational effort to converge than Algorithm \ref{alg-ssnm}. \blue{For those SDPs with low-rank solutions, one expects \texttt{SDPLR} and \texttt{ManiSDP} to have excellent performance. Indeed, we observe that \texttt{SDPLR} and \texttt{ManiSDP} share comparable performance and outperform \texttt{SDPNAL+} and Algorithm \ref{alg-ssnm} on Example \ref{eg-maxcut} {consisting of max-cut SDPs}. We note that the feasible set of the max-cut SDP defines	a smooth manifold, hence, \texttt{ManiSDP} essentially applies \texttt{Manopt} for solving an optimization over this manifold.
For Example \ref{eg-theta}, we see that \texttt{ManiSDP} outperforms \texttt{SDPLR} but 
is usually becomes less efficient than \texttt{SDPNAL+} and Algorithm \ref{alg-ssnm}. Also, we observe that \texttt{SDPLR} and \texttt{ManiSDP} typically output less accurate solutions in the sense that the KKT residues, especially $\eta_c$, are much worse than those of \texttt{SDPNAL+} and Algorithm \ref{alg-ssnm}. 
On Example \ref{eg-biq},
\texttt{SDPLR} performs better than \texttt{SDPNAL+} and \texttt{ManiSDP} but worse than Algorithm \ref{alg-ssnm}. Moreover, \texttt{ManiSDP} fails to solve all the tested problems for Example \ref{eg-biq}. This could be due to the fact that there is no explicit manifold structure for \texttt{ManiSDP} to take advantage of \footnote{\blue{In this case, we use the routine designed for solving general SDPs with arbitrary linear constraints provided by  \url{https://github.com/wangjie212/ManiSDP-matlab}.}}. 
{On the other hand,} for problems arising from Example \ref{eg-rcp}, there is a fixed-trace constraint which leads to an explicit manifold structure. In this case, \texttt{ManiSDP} shows the best performance for problems with $n \leq 1000$. However, \texttt{ManiSDP} fails to  solve the problem \texttt{spambase-large.2} with $n = 1500$. We also observe that \texttt{SDPLR} fails to solve half of the tested problems. As a conclusion, \texttt{SDPNAL+} and Algorithm \ref{alg-ssnm} show excellent performance in terms of efficiency and robustness, while the efficiency and robustness of \texttt{SDPLR} and \texttt{ManiSDP} can be sensitive to the problem being solved.}

From Table \ref{table-tensor}, we see that Algorithm \ref{alg-ssnm} is able to solve all the problems but perform much worse than {\tt SDPNAL+} in terms of computational time. The reason is that for those SDPs, the ranks of {the} optimal solutions are high. In fact, we always observe that the rank is nearly $ n-1 $, where $ n $ denotes the matrix dimension. In such cases, the primal constraint nondegenerate condition usually holds, making {\tt SDPNAL+} to have a very fast convergence rate. On the other hand, for these SDPs, because the dual constraint nondegeneracy condition typically fails to hold, Algorithm \ref{alg-ssnm} would require more computational effort than {\tt SDPNAL+} to solve the problems. \blue{Surprisingly, though \texttt{SDPLR} is designed for solving SDPs with low-rank solutions, it sometimes shows promising performance when solving SDPs with high-rank solutions, as indicated by Table \ref{table-tensor}. {In comparison}, \texttt{ManiSDP} fails to solve most of the tested problems due to the fact that the solutions are of high rank and there is no explicit manifold structure to utilize.}

Based on the relative performance of {\tt SDPNAL+} and Algorithm \ref{alg-ssnm} in Tables \ref{table-maxcut}--\ref{table-tensor}, we may conclude that when the primal constraint nondegeneracy condition is likely to hold at the optimal solution, {\tt SDPNAL+} would be preferable. However, when such prior information is not available, Algorithm \ref{alg-ssnm} could be a good choice based on the promising numerical performance demonstrated in Tables \ref{table-maxcut}--\ref{table-rcp}, even though the primal constraint nondegeneracy condition may not hold, \blue{based on our empirical observation}. Note finally that for our algorithm, the errors $\eta_p$ and $\eta_d$ are usually very small while $\eta_c$ is driven to a value slightly smaller than ${\tt tol} = 10^{-6}$. On the contrary, {\tt SDPNAL+} keeps $\eta_c$ to be very small while progressively decreases $\eta_p$ and $\eta_d$.

\small  
\blue{
% [inline block 0: 5 envs, 62884 chars -> data_tex | \begin{longtable}{|l|l|r|r|c|c|} 	\caption{Computational results for Example \ref{eg-maxcut} with ${\tt tol}_0 = 10^{-3}...]

}

\normalsize

\section{Concluding remarks}
\label{conclusions}
We have analyzed and implemented a squared smoothing Newton method {via} the Huber {smoothing} function for solving semidefinite programming problems (SDPs). With {a careful}  design of the algorithmic framework, our theoretical analysis has shown that the proposed algorithm is well-defined, guarantees global convergence and admits a superlinear convergence rate under the primal and dual constraint nondegeneracy conditions. Besides establishing these convergence properties, we have also conducted extensive numerical experiments on solving various classes of SDPs to evaluate the practical performance of our algorithm. We have compared our method with the state-of-the-art SDP \blue{solvers, including {\tt SDPLR}, {\tt ManiSDP} and {\tt SDPNAL+},} and the numerical results have demonstrated the excellent efficiency of our algorithm. We note that the current implementation of the algorithm is not {as mature as we would hope for} since the performance may depend sensitively on some parameters. However, given the promising numerical results on the {tested} examples, we are inspired to conduct a more robust implementation in a future work.

% Acknowledgments here
\section*{Acknowledgments}
The research of Defeng Sun is supported in part by the  Hong Kong Research Grants Council under grant 15307523, and the research of Kim-Chuan Toh is supported by the Ministry of Education, Singapore, under its Academic Research Fund Tier 3 grant call (MOE-2019-T3-1-010).

\blue{We would like to thank the editors and referees for  providing numerous valuable suggestions which have helped to improve the quality of this paper.}

%% appendices
\appendix
\section{Proof of Proposition \ref{prop-huber-properties}}\label{proof-huber-properties}
	Part 1 is a direct consequence of \cite[Proposition 4.3]{chen2003analysis}.	We next prove part 2. Since $h(\cdot,\cdot)$ is locally Lipschitz continuous on $\R\times \R$, then by \cite[Theorem 9.67]{rockafellar2009variational}, there exist continuously differentiable functions $h_\ell:\R\times \R \rightarrow \R$, $\ell \geq 1$, converging uniformly to $h$ and satisfying
	\[
	\norm{h_\ell'(\tau, \xi)} \leq L , \quad \forall (\tau, \xi) \in \cJ:= [\epsilon-\nu, \epsilon + \nu ] \times (\bigcup_{i = 1}^n [d_i-\nu_i, d_i + \nu_i] ) ,
	\]
	with some constants $L > 0$, $\nu > 0$ and $\nu_i > 0$. For any $ (\tau, H)\in \R\times \S^n $ with $H =\tilde P\mathrm{diag}(\tilde d_1, \dots, \tilde d_n)\tilde P^T$, define
	\[
	\Phi_\ell(\tau, H) = \tilde P\mathrm{diag}(h_\ell(\tau, \tilde d_1), \dots, h_\ell(\tau, \tilde d_n)) \tilde P^T,\quad \ell\geq 1.
	\]
	From \cite{horn2012matrix}, we may assume that there exists a neighborhood of $(\epsilon, W)$, denoted by $\cU$, such that $(\tau, \tilde d_i) \in \cJ$, for {all $i = 1,\dots, n$, and $ (\tau, H) \in \cU$}. Note that {$\Phi_\ell$} converges to $\Phi$ uniformly on $\cU$. {Fix $ (\tau_1,H_1) $, $ (\tau_2, H_2)\in \cU $ such that $ (\tau_1,H_1)\neq (\tau_2, H_2) $.} {Then, for any $\hat L > 0$ and $\ell$ sufficiently large, it holds that}
	\begin{equation}\label{lemma-Lip-Phi-2}
		\norm{\Phi_\ell(\tau, H) - \Phi(\tau, H)} \leq {\hat L \norm{(\tau_1, H_1) - (\tau_2, H_2)}},\quad \forall (\tau, H)\in \cU.
	\end{equation}
	Using \eqref{lemma-Lip-Phi-2}, for any $(\tau, H)\in \cU$, it follows that
	\[
	\begin{aligned}
		&\; \norm{\Phi(\tau_1, H_1) - \Phi(\tau_2, H_2)} \\
		\leq &\; \norm{\Phi(\tau_1, H_1) - \Phi_\ell(\tau_1, H_1)} + \norm{\Phi_\ell(\tau_1, H_1) - \Phi_\ell(\tau_2, H_2)} + \norm{\Phi_\ell(\tau_2, H_2) -  \Phi(\tau_2, H_2)} \\
		\leq &\; 2\hat L \norm{(\tau_1, H_1) - (\tau_2, H_2)}  + \norm{\int_0^1 \Phi_\ell'(\tau_1 + t(\tau_2-\tau_1), H_1 + t(H_2-H_1))(\tau_1 -\tau_2, H_2 - H_1) dt} \\
		\leq &\; (2\hat L + L)\norm{(\tau_1, H_1) - (\tau_2, H_2)}
	\end{aligned}
	\]
	for $\ell\geq 1$ sufficiently large. Since $ \hat L > 0 $ is arbitrary, we see that $ \Phi $ is locally Lipschitz continuous with modulus $ L>0 $.
	
	Then, we turn to prove part 3. Let us recall that $ \Phi(0,W) = \sum_{j = 1}^rh(0, \lambda_j)Q_j $ and write
	\[
	\begin{aligned}
		\Phi(t\tau, W + tH)
		= &\;  \sum_{j = 1}^rh(0, \lambda_j)Q_j(t)
		+ \sum_{i = 1}^n \left( h(t\tau, d_i(t)) - h(0, d_i)\right) p_i(t)p_i(t)^T.
	\end{aligned}
	\]
	Then, it follows that
	\[
	\begin{aligned}
		&\;\lim_{t\downarrow 0} \frac{1}{t}\left(\Phi(t\tau, W + tH) - \Phi(0, W) \right)\\
        = &\;\sum_{j = 1}^rh(0, \lambda_j)\lim_{t\downarrow 0} \frac{1}{t}\left(Q_j(t) - Q_j\right)  + \sum_{i = 1}^n \lim_{t\downarrow 0} \frac{1}{t}\left( h(t\tau, d_i(t)) - h(0, d_i)\right) p_i(t)p_i(t)^T.
	\end{aligned}
	\]
	By \cite[Eq. (2.9)]{shapiro2002differentiability}, for any $j = 1,\dots, r$, the following holds:
	\[
	   \lim_{t\downarrow 0} \frac{1}{t}\left(Q_j(t) - Q_j\right) = Q'(0)(H) = \frac{1}{2} \sum_{1\leq k\neq j\leq r} \frac{h(0, \lambda_k)-h(0, \lambda_j)}{\lambda_k- \lambda_j}\left(Q_jHQ_k + Q_kHQ_j\right).
	\]
	From \cite[Proposition 3.1]{shapiro2002differentiability}, for any $i = 1,\dots, n$, we know that $d_i'(W;H)$ is well-defined, and it holds that
	\[
	\begin{aligned}
		&\; \lim_{t\downarrow 0} \frac{1}{t}\left( h(t\tau, d_i(t)) - h(0, d_i)\right)= h'((0, d_i); (\tau, d_i'(W; H)))\\
		= &\;   \sum_{i\in \alpha} \left( d_i'(W; H) - \frac{\abs{\tau}}{2}\right)p_ip_i^T + \sum_{i\in \beta} h(\tau, d_i'(W;H))p_ip_i^T,
	\end{aligned}
	\]
	where we have used the fact that $h$ is directionally differentiable with
	\[
	h'((0, u); (\tau, v)) = \left\{
	\begin{array}{ll}
		v - \frac{\abs{\tau}}{2} & u > 0 \\
		h(\tau, v) & u = 0 \\
		0 & u < 0
	\end{array}
	\right.,\quad \forall (\tau, v)\in \R\times \R.
	\]
	Since $p_i(t)\rightarrow p_i$ as $t\downarrow 0$, $ \displaystyle \lim_{t\downarrow 0} \frac{1}{t}\left(\Phi(t\tau, W + tH) - \Phi(0, W) \right) $
	exists. Hence, $\Phi$ is directionally differentiable at $(0, W)$, and \eqref{eq-dir-diff-1} holds true. Finally, the explicit expression of the directional differential $d_i'(W;H)$ ($ 1\leq i \leq n$) is again obtained from \cite[Proposition 3.1]{shapiro2002differentiability}.
	
	Finally, we prove part 4. {Based on part 1, $ \Phi $ is continuously differentiable at any $ (\epsilon, W) $ with $ \epsilon \neq 0 $. Hence, $\Phi$ is naturally strongly semismooth at these points. Thus, we only need to show that $ \Phi $ is strongly semismooth at $ (0, W) $ for any $ W\in \S^n $.} We have already known that $\Phi$ is locally Lipschitz continuous and directionally differentiable everywhere. By \cite[Theorem 3.7]{sun2002semismooth}, we only need to show that for any $(\tau, H)\in \R\times \S^n$ with $\norm{(\tau, H)} \rightarrow 0$,
	\begin{equation}\label{eq-semismooth-2}
		\Phi(\tau, W+H) - \Phi(0, W) - \Phi'((\tau, W+H); (\tau, H)) = O\left(\norm{(\tau, H)}^2\right).
	\end{equation}
	First, similar to the proof of part 3, one can show for any $ (\tau, H)\in \R\times \S^n $,
	\begin{equation}\label{eq-semismooth-1}
		\Phi'((\tau, W+H); (\tau, H))
			=   \sum_{j = 1}^rh(\tau, \lambda_j(1))Q_j'(1)(H)   + \sum_{i = 1}^n h'((\tau, d_i(1)); (\tau, d_i'(W+H; H)))p_i(1)p_i(1)^T.
	\end{equation}
	Then, by the fact that $d_i$ is strongly semismooth everywhere (see for example \cite[Proposition 3.2]{shapiro2002differentiability}), we deduce that
	\begin{eqnarray*}
		& & \Phi(\tau, W+H) - \Phi(0, W) \\
		&=&  \sum_{j=1}^rh(\tau,\lambda_j(1))\left(Q_j(1) - Q_j\right) + \sum_{i = 1}^n \left(h(\tau, d_i(1)) - h(0, d_i)\right)p_ip_i^T \\
		&=& \sum_{j=1}^rh(\tau,\lambda_j(1))Q_j'(1)(H) + O\left(\norm{H}^2\right)  +\sum_{i = 1}^n \left(h'((\tau, d_i(1)); (\tau, d_i'(W+H; H)))\right)p_ip_i^T \\
		&& + O\left(\norm{(\tau, H)}^2\right) \\
		&=& \sum_{j=1}^rh(\tau,\lambda_j(1))Q_j'(1)(H) +\sum_{i = 1}^n \left(h'((\tau, d_i(1)); (\tau, d_i'(W+H; H)))\right)p_i(1)p_i(1)^T \\
		&&  + O\left(\norm{(\tau, H)}^2\right),
	\end{eqnarray*}
	which together with \eqref{eq-semismooth-1} and the fact that $p_i$ is analytic around $W$ implies \eqref{eq-semismooth-2}. Thus, the proof is completed.

\section{Proof of Lemma \ref{lemma-partial-Gamma}}\label{proof-partial-Gamma}
First, let $V\in \partial_B\Phi(0,W)$. By the definition of $\partial_B\Phi(0,W)$, there exists a sequence $\{(\epsilon^k, W^k)\}$ converging to $(0,W)$ with $\epsilon^k\neq 0$ such that $V = \displaystyle \lim_{k\rightarrow\infty}\Phi'(\epsilon^k,W^k)$. Let each $W^k$ have the following spectral decomposition:
\[
W^k = P^k D^k (P^k)^T, \quad D^k := \mathrm{diag}\left(D_\alpha^k, D_\beta^k, D_\gamma^k\right),
%\begin{pmatrix}
%	D_\alpha^k & 0 & 0 \\ 0 & D_\beta^k & 0 \\ 0 & 0 & D_\gamma^k
%\end{pmatrix},
\]
where $D_\alpha^k =  \mathrm{diag}(d_1^k,\dots, d_{\abs{\alpha}}^k)$, $D_\beta^k = \mathrm{diag}(d_{\abs{\alpha}+1}^k,\dots, d_{\abs{\alpha} + \abs{\beta}}^k)$, and $D_\gamma^k = \mathrm{diag}(d_{\abs{\alpha} + \abs{\beta}+1}^k,\dots, d_{n}^k)$,
%\[
%	\begin{aligned}
%		D_\alpha^k = &\;  \mathrm{diag}(d_1^k,\dots, d_{\abs{\alpha}}^k), \\
%		 D_\beta^k = &\; \mathrm{diag}(d_{\abs{\alpha}+1}^k,\dots, d_{\abs{\alpha} + \abs{\beta}}^k), \\
%		  D_\gamma^k =&\; \mathrm{diag}(d_{\abs{\alpha} + \abs{\beta}+1}^k,\dots, d_{n}^k),
%	\end{aligned}
%\]
with $d_1^k \geq \dots \geq d_n^k$. For simplicity, denote $d^k = (d_1^k,\dots,d_n^k)^T\in \R^n$. By taking {a subsequence} if necessary, we {may assume} without loss of generality {that} (a) $\displaystyle\lim_{k\rightarrow\infty}D^k = D$, $\displaystyle \lim_{k\rightarrow\infty}P^k = P$, and (b) both sequences $\{\Omega(\epsilon^k,d^k)\}$ and $\{\cD(\epsilon^k, d^k)\}$ converge, {where $ \Omega(\epsilon^k, d^k) $ and $ \cD(\epsilon^k, d^k) $ are defined as in \eqref{eq-huber}}. In particular, we see that $\{\Omega(\epsilon^k,d^k)_{\alpha\alpha}\} $, $\{\Omega(\epsilon^k,d^k)_{\alpha\beta}\} $ converge {to} two matrices of all ones of suitable sizes, respectively, $\{\Omega(\epsilon^k,d^k)_{\alpha\gamma}\} $ converges to $\Omega_0(d)_{\alpha\gamma}$ and the limit of the sequence $\{\Omega(\epsilon^k,d^k)_{\beta\beta}\} $ exists. Moreover, let
\[
	\cD(\epsilon^k,d^k) = \mathrm{diag}(\cD(\epsilon^k,d^k)_\alpha, \cD(\epsilon^k,d^k)_\beta,\cD(\epsilon^k,d^k)_\gamma).
\]
It holds that $\{\cD(\epsilon^k,d^k)_\gamma\}$ converges to the zero matrix, and the limits of $\{\cD(\epsilon^k,d^k)_\alpha\}$ and $\{\cD(\epsilon^k,d^k)_\beta\}$ exist.

From Proposition \ref{prop-huber-properties} and $ \epsilon_k\neq 0 $, for any $(\tau, H)\in \R\times \S^n$, we see that
\begin{equation}
	\label{eq-Gamma-1}
	\Phi'(\epsilon^k,W^k)(\tau, H) = P^k\left[\Omega(\epsilon^k, d^k)\circ \tilde H^k + \tau \cD(\epsilon^k, d^k)\right](P^k)^T, %\quad \tilde H_k:= (P^k)^THP^k.	
\end{equation}
where $\tilde H_k:= (P^k)^THP^k$. Taking \blue{the limit} of both sides in \eqref{eq-Gamma-1} yields that
\[		\begin{aligned}
V(\tau, H) 
 = &\;P\begin{pmatrix}
		\tilde H_{\alpha\alpha} & \tilde H_{\alpha\beta} & \Omega_0(d) \circ \tilde H_{\alpha\gamma} \\
		\tilde H_{\alpha\beta}^T & 	\displaystyle
		\lim_{k\rightarrow\infty}\Omega(\epsilon^k, d^k)_{\beta\beta}\circ \tilde H_{\beta\beta} & 0 \\
		\tilde H_{\alpha\gamma}^T \circ \Omega_0(d)^T & 0 & 0
	\end{pmatrix}P^T \\
	&\; + \tau P\begin{pmatrix}
		\displaystyle \lim_{k\rightarrow\infty} \cD(\epsilon^k, d^k)_\alpha & 0 & 0 \\
		0 & 	\displaystyle
		\lim_{k\rightarrow\infty}\cD(\epsilon^k,d^k)_\beta & 0 \\
		0 & 0 & 0
	\end{pmatrix}P^T.
\end{aligned}
\]

For each $k\geq 1$, define $Y^k:= P \mathrm{diag}(0, D_\beta^k, 0)P^T$ and $\tilde Y^k := P^TY^kP$. Note that the mapping $\cL$ is F-differentiable at $(\epsilon^k, Y^k)$ since $ \epsilon_k\neq 0 $. For $k$ sufficiently large, we have from \eqref{eq-dir-diff-new} and Proposition \ref{prop-huber-properties} that
\[
\begin{aligned}
	&\; \cL'(\epsilon^k, Y^k)(\tau, H) =  \lim_{t\downarrow0}\frac{ \cL(\epsilon^k+t\tau, Y^k + tH) - \cL(\epsilon^k, Y^k)}{t} \\
		=&\; P \begin{pmatrix}
		\tilde H_{\alpha\alpha}  & \tilde H_{\alpha\beta} & [\Omega_0(d)]_{\alpha\gamma}\circ \tilde H_{\alpha\gamma} \\
		\tilde H_{\alpha\beta}^T & \Phi'_{\abs{\beta}}(\epsilon^k, D_\beta^k)(\tau, \tilde H_{\beta\beta}) & 0 \\
		[\Omega_0(d)]_{\alpha\gamma}^T \circ H_{\alpha\gamma}^T & 0 & 0
	\end{pmatrix} P^T -  \frac{\tau}{2}\mathrm{sgn}(\epsilon^k)\sum_{i\in\alpha}p_ip_i^T \\
	= &\; P \begin{pmatrix}
		\tilde H_{\alpha\alpha} & \tilde H_{\alpha\beta} & [\Omega_0(d)]_{\alpha\gamma}\circ \tilde H_{\alpha\gamma} \\
		\tilde H_{\alpha\beta}^T & \Omega(\epsilon^k,d^k)_{\beta\beta}\circ \tilde H_{\beta\beta} & 0 \\
		[\Omega_0(d)]_{\alpha\gamma}^T \circ H_{\alpha\gamma}^T & 0 & 0
	\end{pmatrix} P^T \\
	&\; + \tau P \begin{pmatrix}
		\cD(\epsilon_k, d^k)_\alpha & 0 & 0 \\
		0 & \cD(\epsilon^k,d^k)_\beta & 0 \\
		0 & 0 & 0
	\end{pmatrix}P^T,
\end{aligned}
\]	
which implies that $V(\tau,H) = \displaystyle\lim_{k\rightarrow\infty}\cL'(\epsilon^k, Y^k)(\tau, H)$. Hence, $V\in \partial_B\cL(0,0)$.

Conversely, choose $V\in \partial_B\cL(0,0)$. By definition, there exists a sequence $\{(\epsilon^k, Y^k)\}$ converging to $(0,0)$ with $\epsilon^k\neq 0$ such that $V = \displaystyle\lim_{k\rightarrow\infty}\cL'(\epsilon^k, Y^k)$. Let $\tilde Y^k := P^TY^kP$. Assume that $ \tilde Y^k_{\beta\beta}$ has the spectral decomposition: $\tilde Y^k_{\beta\beta} = U^k\tilde D^k_\beta (U^k)^T$, where $\tilde D^k_\beta = \mathrm{diag}(\tilde z^k)$, $\tilde z^k := (\tilde z_1^k,\dots, \tilde z_{\abs{\beta}}^k)^T\in \R^{\abs{\beta}}$, $\tilde z_1^k\geq \dots \geq \tilde z_{\abs{\beta}}^k$, and $U^k\in \R^{\abs{\beta}\times \abs{\beta}}$ is orthogonal. Then, for any $(\tau,H)\in \R\times \S^n$ with $\tilde H := P^THP$, we get from \eqref{eq-dir-diff-new} and Proposition \ref{prop-huber-properties} that
\[
    \cL'(\epsilon^k, Y^k)(\tau, H) = P \begin{pmatrix}
		\tilde H_{\alpha\alpha} & \tilde H_{\alpha\beta} & [\Omega_0(d)]_{\alpha\gamma}\circ \tilde H_{\alpha\gamma} \\
		\tilde H_{\alpha\beta}^T & \hat H_{\beta\beta} & 0 \\
		[\Omega_0(d)]_{\alpha\gamma}^T \circ H_{\alpha\gamma}^T & 0 & 0
	\end{pmatrix} P^T  -  \frac{\tau}{2}\mathrm{sgn}(\epsilon^k)\sum_{i\in\alpha}p_ip_i^T,
\]
where $\hat H_{\beta\beta}:= U^k\left[\Omega(\epsilon^k, \tilde z^k)\circ \left((U^k)^T\tilde H_{\beta\beta} U^k \right) + \tau \cD(\epsilon^k,\tilde z^k)  \right] (U^k)^T$. For any $k\geq 1$, define
\[
W^k = W + P\begin{pmatrix}
	0 & 0 & 0 \\
	0 & \tilde Y^k_{\beta\beta} & 0 \\
	0 & 0 & 0
\end{pmatrix}P^T,\quad \tilde W^k = P^TW^kP = \begin{pmatrix}
	D_\alpha & 0 & 0 \\
	0 & \tilde Y^k_{\beta\beta}  & 0 \\
	0 & 0 & D_\gamma
\end{pmatrix}
\]
where
\[
D_\alpha = \mathrm{diag}(d_1,\dots, d_{\abs{\alpha}}), \quad D_\gamma = \mathrm{diag}(d_{\abs{\alpha} + \abs{\beta}+1},\dots, d_{n}).
\]
Moreover, we partition $P$ as $P = (P_\alpha,P_\beta,P_\gamma)$, set $P^k = (P_\alpha,P_\beta U^k,P_\gamma)$ and construct a vector $d^k\in \R^n$ as follows
\[
d^k_i = \left\{
\begin{array}{ll}
	d_i & i\in \alpha\cup \gamma \\
	\tilde z^k_{i - \abs{\alpha}} & i\in \beta
\end{array}.
\right.
\]	
Clearly, it holds that $W^k = P^k \mathrm{diag}(d^k)(P^k)^T$. Since $\epsilon^k\neq 0$, $\Phi$ is F-differentiable at $(\epsilon^k, W^k)$, and
\[
\Phi'(\epsilon^k, W^k)(\tau, H) = P^k\left[\Omega(\epsilon^k, d^k)\circ\left((P^k)^THP^k\right) + \tau \cD(\epsilon^k, d^k)\right](P^k)^T.
\]
Let us now assume without loss of generality that the three sequences $\{U^k\}$, $\{\Omega(\epsilon^k, d^k)\}$ and $\{\cD(\epsilon, d^k)\}$ converge (since they are all uniformly bounded). Then, simple {calculations show} that
\[
\lim_{k\rightarrow\infty}[\Omega(\epsilon^k, d^k)]_{ij} = \left\{
\begin{array}{ll}
	1 & i\in \alpha, j\in \alpha\cup \beta \\
	\Omega_0(d) & i\in \alpha, j\in \gamma \\
	0 & i\in \beta\cup \gamma, j\in \gamma \\
	\displaystyle[\lim_{k\rightarrow\infty}\Omega(\epsilon^k, \tilde z^k)]_{(i-\abs{\alpha})(j-\abs{\alpha})} & i\in \beta,j\in \beta
\end{array},
\right.
\]
and that
\[
\lim_{k\rightarrow\infty}\cD(\epsilon^k, d^k) = \begin{pmatrix}
	\displaystyle \lim_{k\rightarrow\infty}\cD(\epsilon^k,d_\alpha) & 0 & 0 \\
	0 & \displaystyle \lim_{k\rightarrow \infty}\cD(\epsilon^k, \tilde z^k) & 0 \\
	0 & 0 & 0
\end{pmatrix},
\]
where $d_\alpha := (d_1,\dots, d_{\abs{\alpha}})^T\in\R^{\abs{\alpha}}$. As a consequence, we get
\[
\lim_{k\rightarrow\infty} (P^k)^T \left( \cL'(\epsilon^k, Y^k)(\tau, H) - \Phi'(\epsilon^k, W^k)(\tau, H)\right) P^k = 0, \quad \forall (\tau, H)\in \R\times \S^n,
\]
which further implies that $V(\tau, H) = \displaystyle\lim_{k\rightarrow\infty}\Phi'(\epsilon^k, W^k)(\tau, H)$, for all $(\tau, H)\in \R\times \S^n$. Then, $V\in \partial_B\Phi(0, W)$. Therefore, the proof is completed.

\section{Proof of Lemma \ref{lemma-partial-B-Phi}} \label{proof-partial-B-Phi}
We only prove the case for {the} B-subdifferential as the case for Clarke's generalized Jacobian can be proved \blue{similarly}.
	Let the smooth mapping ${\Psi:\R\times \S^n\rightarrow \R\times \S^n}$ be defined as $\Psi(\tau, H):= (\tau, P^THP)$ for any $(\tau, H)\in \R\times \S^n$. It is clear that $ \Psi'(\tau, H) : \R\times \S^n \rightarrow \R\times \S^n$ is onto. Define the mapping $\Upsilon: \R\times \S^n \rightarrow \S^n$ as follows:
	\[
		\begin{aligned}
			\Upsilon(\nu, Y) := &\; P
	\begin{pmatrix}
		Y_{\alpha\alpha} & Y_{\alpha\beta} & [\Omega_0(d)]_{\alpha\gamma} \circ Y_{\alpha\gamma} \\
		Y_{\alpha\beta}^T & \Phi_{\abs{\beta}}(\nu, Y_{\beta\beta}) & 0 \\
		Y_{\alpha\gamma}^T\circ [\Omega_0(d)]_{\alpha\gamma}^T & 0 & 0
	\end{pmatrix}
	P^T - \frac{\abs{\nu}}{2}\sum_{i\in\alpha}p_ip_i^T,  
		\end{aligned}
	\]
	$\forall (\nu, Y)\in \R\times \S^n.$ Then, by \eqref{eq-dir-diff-new}, it holds that $\cL(\tau, H) = \Upsilon(\Psi(\tau, H))$. Now, by \cite[Lemma 1]{chan2008constraint} and Lemma \ref{lemma-partial-Gamma}, we have
	\[
	\partial_B\Phi(0,W) = \partial_B\cL(0,0) = \partial_B\Upsilon(0,0)\Psi'(0, 0),
	\]
	which proves the first part of the lemma. The expression of $V_{\abs{\beta}}\in \partial_B\Phi(0,0)$ follows directly from its definition and Proposition \ref{prop-huber-properties}. Thus, the proof is completed.

 \section{Proof of Lemma \ref{lemma-V-PSD}} \label{proof-V-PSD}
 We first consider $ V\in \partial_B\Phi(0,W) $. Using Lemma \ref{lemma-partial-B-Phi}, there exist an orthogonal matrix $ U\in \R^{\abs{\beta}\times \abs{\beta}} $ and a symmetric matrix $ \Omega_{\abs{\beta}}\in\S^{\abs{\beta}} $ such that
	\[
		\begin{aligned}
			&\; \inprod{H - V\left(0, H\right)}{V\left(0, H\right)}\\
			= &\; \inprod{P^T\left(H - V(0, H)\right)P}{P^TV(0, H)P}
			 \\
			= &\; 2\inprod{\left(E_{\alpha\gamma} -  [\Omega_0(d)]_{\alpha\gamma}\right)\circ \tilde H_{\alpha\gamma}}{ [\Omega_0(d)]_{\alpha\gamma}\circ \tilde H_{\alpha\gamma}} \\
			&\;  + \inprod{\tilde H_{\beta\beta} - U\left[\Omega_{\abs{\beta}}\circ\left(U^T\tilde H_{\beta\beta}U\right)\right]U^T}{U\left[\Omega_{\abs{\beta}}\circ\left(U^T\tilde H_{\beta\beta}U\right)\right]U^T} \\
			= &\; 2\inprod{(E_{\alpha\gamma} -  [\Omega_0(d)]_{\alpha\gamma})\circ \tilde H_{\alpha\gamma}}{ [\Omega_0(d)]_{\alpha\gamma}\circ \tilde H_{\alpha\gamma}} \\
			&\; + \inprod{\left(E_{\beta\beta} - \Omega_{\abs{\beta}}\right)\circ\left(U^T\tilde H_{\beta\beta}U\right)}{\Omega_{\abs{\beta}}\circ\left(U^T\tilde H_{\beta\beta}U\right)},
		\end{aligned}
	\]
	where $ E_{\alpha\gamma} $ and $ E_{\beta\beta} $ denote the matrices of all ones in $ \R^{\abs{\alpha}\times \abs{\gamma}} $ and $ \R^{\abs{\beta}\times\abs{\beta}} $, respectively. Notice that the elements in both the matrices $ \Omega_0(d) $ and $ \Omega_{\abs{\beta}} $ are all inside the interval $ [0,1] $. Hence, we conclude that $ \inprod{H - V(0, H)}{V(0, H)} \geq 0 $, for any $ V\in \partial_B\Phi(0,W) $ and $ H\in \S^n $.
	
	Next, we let $ V\in \Phi(0,W) $. By Carath\'{e}odory's theorem \cite{caratheodory1911variabilitatsbereich}, there exist a positive integer $ q $ and $ V^i\in \partial_B\Phi(0,W) $, $ i = 1,\dots,q $, such that $ V  $ is the convex combination of $ V^1,\dots, V^q $. Therefore, there exist nonnegative scalars $ t_1,\dots, t_q $ such that $ V = \sum_{i = 1}^q t_iV^i $ with $ \sum_{i = 1}^qt_i = 1 $. Define the convex function $ \theta(X) = \inprod{X}{X},X\in \S^n $. By {the convexity of $\theta$}, we have
	\[
		\begin{aligned}
			\inprod{V(0, H)}{V(0,H)} = &\; \theta(V(0, H))
		=  \theta\left(\sum_{i = 1}^q t_iV^i(0, H)\right) \leq  \sum_{i = 1}^q t_i\theta(V^i(0,H)) = \inprod{H}{V(0,H)}.
		\end{aligned}
	\]
	{This completes the proof.}

\section{Proof of Lemma \ref{lemma-nonsingular}}\label{proof-nonsingular}
    Simple {calculations give}
	\[
	\begin{aligned}
		&\; \hcE'(\epsilon,  X, y, Z) = 
	\begin{pmatrix}
			1 & 0 & 0 & 0 \\
				{\kappa_p y} & \cA & \kappa_p{\epsilon}I_m & 0 \\
				0 & 0 & -\cA^* & -\cI \\
				{\kappa_c X}- {\Phi_1'(\epsilon, X- Z)} & (1+\kappa_c{\epsilon})\cI - \Phi_2'(\epsilon, X-Z) & 0 & {\Phi_2'(\epsilon, X-Z)}
			\end{pmatrix},
	\end{aligned}
	\]
	where {$ \Phi'_1 $ and $ \Phi_2' $ denote the partial derivatives of $ \Phi $ with respect to the first and second arguments, respectively, and} $\cI$ is the identity map over $\S^n$. To show that $\hcE'(\epsilon, X, y, Z)$ is nonsingular, it suffices to show that the following system of linear equations
	\[
	\hcE'(\epsilon, X, y, Z)(\Delta \epsilon, \Delta X, \Delta y, \Delta Z) = 0,\quad (\Delta \epsilon, \Delta X, \Delta y, \Delta Z)\in \R\times \X,
	\]
	has only the trivial {solution $(\Delta \epsilon, \Delta X, \Delta y, \Delta Z) = (0, 0, 0, 0)$}. It is obvious that $\Delta \epsilon = 0$. Since $(1+\kappa_c{\epsilon})\Delta X - \Phi_2'(\epsilon, X-Z)\Delta X {+} \Phi_2'(\epsilon, X-Z)\Delta Z = 0$, it follows that
	\begin{equation}
		\label{lemma-nonsingular-1}
		\Delta X = {-}\left((1+\kappa_c{\epsilon})\cI - \Phi_2'(\epsilon, X-Z)\right)^{-1} \Phi_2'(\epsilon, X-Z)\Delta Z.
	\end{equation}
	By the equality $-\cA^*\Delta y - \Delta Z = 0$, it follows that $\Delta Z = -\cA^*\Delta y$ which together with \eqref{lemma-nonsingular-1} implies that
	\begin{equation}\label{lemma-nonsingular-2}
		\Delta X = \left((1+\kappa_c{\epsilon})\cI - \Phi_2'(\epsilon, X-Z)\right)^{-1} \Phi_2'(\epsilon, X-Z)\cA^*\Delta y.
	\end{equation}
	{Using} the fact that $\cA \Delta X + \kappa_p{\epsilon}\Delta y= 0$ {and \eqref{lemma-nonsingular-2}, we get}
	\[
	\left( \kappa_p{\epsilon}I_m  + \cA \left((1+\kappa_c{\epsilon})\cI - \Phi_2'(\epsilon, X-Z)\right)^{-1} \Phi_2'(\epsilon, X-Z)\cA^*\right) \Delta y = 0.
	\]
	{Since $ 0 \preceq \Phi_2'(\epsilon, X-Z) \preceq \cI $, it is easy to show that the coefficient matrix in the above linear system is symmetric positive definite. Thus,} $\Delta y = 0$, which further implies that $\Delta X = \Delta Z = 0$. {Therefore}, the proof is completed.

\section{Proof of Lemma \ref{lemma-step-size}} \label{proof-step-size}
Since $ \epsilon'\in \R_{++} $, by Lemma \ref{lemma-nonsingular}, $ \hcE'(\epsilon', X', y', Z') $ is nonsingular. Since $ \hcE $ is continuously differentiable around $ (\epsilon', X', y', Z') $, there exists an open neighborhood $ \cU $ of $ (\epsilon', X', y', Z') $ such that for any $ (\epsilon, X, y, Z) $, $ \hcE'(\epsilon, X, y, Z) $ is nonsingular. {Therefore, the existence of $\bDelta$ is guaranteed.}
	
	Denote $\cR(\epsilon, X, y, Z) := \cE(\epsilon, X, y, Z) + \cE'(\epsilon, X, y, Z) {\bDelta}$.
	Then one can verify that $(\Delta \epsilon, \Delta X, \Delta y, \Delta Z)$ is the unique solution of the following equation
	\[
	\hcE(\epsilon, X, y, Z) +  \hcE'(\epsilon, X, y, Z) {\bDelta}
	=
	\begin{pmatrix}
		\zeta(\epsilon, X, y, Z) \hat \epsilon \\ \cR(\epsilon, X, y, Z)
	\end{pmatrix}.
	\]
	Hence, it holds that
	\begin{equation}\label{eq-step-size-1}
		\begin{aligned}
			&\; \inprod{\nabla \psi(\epsilon, X, y, Z)}{{\bDelta}
			} = \inprod{2\nabla \hcE(\epsilon, X, y, Z)\hcE(\epsilon, X, y, Z)}{{\bDelta}
			} \\
			= &\; \inprod{2\hcE(\epsilon, X, y, Z)}{\begin{pmatrix}
					\zeta(\epsilon, X, y, Z) \hat \epsilon \\ \cR(\epsilon, X, y, Z)
				\end{pmatrix} - \hcE(\epsilon, X, y, Z)} \\
			= &\; -2\psi(\epsilon, X, y, Z) + 2\zeta(\epsilon, X, y, Z)\epsilon\hat \epsilon + 2\inprod{\cR(\epsilon, X, y, Z)}{\cE(\epsilon, X, y, Z)} \\
			\leq &\; -2\psi(\epsilon, X, y, Z) + 2r\min\{1, \psi(\epsilon, X, y, Z)^{(1+\tau)/2}\}\epsilon\hat \epsilon  + 2{\hat\eta} \psi(\epsilon, X, y, Z)^{1/2}\norm{\cE(\epsilon, X, y, Z)}.
		\end{aligned}
	\end{equation}
	We consider two possible cases when $\psi(\epsilon, X, y, Z) > 1$ and $\psi(\epsilon, X, y, Z) \leq 1$.
	
	If $\psi(\epsilon, X, y, Z) > 1$, {then} \eqref{eq-step-size-1} implies that
	\[
	\begin{aligned}
		&\; \inprod{\nabla \psi(\epsilon, X, y, Z)}{{\bDelta}
		} \\
		\leq &\; -2\psi(\epsilon, X, y, Z) + 2r\epsilon\hat \epsilon + 2{\hat\eta} \psi(\epsilon, X, y, Z)^{1/2}\sqrt{\psi(\epsilon, X, y, Z) - \epsilon^2} \\
		\leq &\; -2\psi(\epsilon, X, y, Z) + 2 \max\{r\hat \epsilon, {\hat\eta}\}\left(\epsilon + \psi(\epsilon, X, y, Z)^{1/2}\sqrt{\psi(\epsilon, X, y, Z) - \epsilon^2} \right) \\
		\leq &\; -2\psi(\epsilon, X, y, Z) + 2 \max\{r\hat \epsilon, {\hat\eta}\} \psi(\epsilon, X, y, Z) \\
		= &\; 2\left( \sqrt{2} \max\{r\hat \epsilon, {\hat\eta}\}  - 1 \right) \psi(\epsilon, X, y, Z).
	\end{aligned}
	\]
	
	If $\psi(\epsilon, X, y, Z) \leq 1$, {then} \eqref{eq-step-size-1} implies that
	\[
	\begin{aligned}
		&\; \inprod{\nabla \psi(\epsilon, X, y, Z)}{{\bDelta}
		} \\
		\leq &\; -2\psi(\epsilon, X, y, Z) + 2r\psi(\epsilon, X, y, Z)^{(1+\tau)/2}\epsilon\hat \epsilon  + 2{\hat\eta} \psi(\epsilon, X, y, Z)^{1/2}\sqrt{\psi(\epsilon, X, y, Z) - \epsilon^2} \\
		\leq &\; -2\psi(\epsilon, X, y, Z) + 2r\psi(\epsilon, X, y, Z)\hat \epsilon + 2{\hat\eta} \psi(\epsilon, X, y, Z)^{1/2}\sqrt{\psi(\epsilon, X, y, Z) - \epsilon^2} \\
		\leq &\; -2\psi(\epsilon, X, y, Z)  + 2 \max\{r\hat \epsilon, {\hat\eta}\}\psi(\epsilon, X, y, Z)^{1/2}\left(\epsilon\psi(\epsilon, X, y, Z)^{1/2} + \sqrt{\psi(\epsilon, X, y, Z) - \epsilon^2} \right) \\
		\leq &\; -2\psi(\epsilon, X, y, Z) + 2 \max\{r\hat \epsilon, {\hat\eta}\} \psi(\epsilon, X, y, Z) \\
		= &\; 2\left( \sqrt{2} \max\{r\hat \epsilon, {\hat\eta}\}  - 1 \right) \psi(\epsilon, X, y, Z).
	\end{aligned}
	\]
	
	For both cases, we always have that
	\begin{equation}\label{eq-step-size-2}
		\inprod{\nabla \psi(\epsilon, X, y, Z)}{{\bDelta}
		}  \leq 2\left( \sqrt{2} \max\{r\hat \epsilon, {\hat\eta}\}  - 1 \right) \psi(\epsilon, X, y, Z).
	\end{equation}
	
	Since $\nabla \psi(\cdot)$ is uniformly continuous on $\cU$, for all $(\epsilon, X, y, Z)\in \cU$ with $\epsilon > 0$, we have from the Taylor expansion that
	\begin{equation}\label{eq-step-size-3}
		\psi(\epsilon + \alpha \Delta \epsilon, X + \alpha \Delta X, y + \alpha \Delta y, Z + \alpha \Delta Z) = \psi(\epsilon, X, y, Z) + \alpha \inprod{\nabla \psi(\epsilon, X, y, Z)}{{\bDelta}
		} + o(\alpha).
	\end{equation}
	Combining \eqref{eq-step-size-2} and \eqref{eq-step-size-3}, it holds that
	\[
	\begin{aligned}
		 \psi(\epsilon + \alpha \Delta \epsilon, X + \alpha \Delta X, y + \alpha \Delta y, Z + \alpha \Delta Z) 
		 = &\; \psi(\epsilon, X, y, Z) + 2\alpha (\delta -1)\psi(\epsilon, X, y, Z) + o(\alpha) \\
		= &\;  \left[1 - 2\sigma(1 - \delta)\alpha \right] \psi(\epsilon, X, y, Z) + o(\alpha),
	\end{aligned}
	\]
	and the proof is completed.

\section{Proof of Lemma \ref{lemma-lb-epsilon}} \label{proof-lb-epsilon}
Since $(\epsilon^k, X^k, y^k, Z^k)\in \cN$, we have directly from the definition of $\cN$ that $\epsilon^k \geq \zeta(\epsilon^k, X^k, y^k, Z^k)\hat \epsilon$. Recall that $\Delta \epsilon^k = -\epsilon^k + \zeta(\epsilon^k, X^k, y^k, Z^k) \hat \epsilon  \leq 0$. It holds that
	\[
	\begin{aligned}
		&\; \epsilon^k + \alpha \Delta \epsilon^k  - \zeta(\epsilon^k + \alpha \Delta \epsilon^k, X^k + \alpha \Delta X^k,  y^k + \alpha \Delta y^k, Z^k + \alpha \Delta Z^k) \hat \epsilon \\
		\geq &\; \epsilon^k + \Delta \epsilon^k - \zeta(\epsilon^k + \alpha \Delta \epsilon^k, X^k + \alpha \Delta X^k, y^k + \alpha \Delta y^k, Z^k + \alpha \Delta Z^k)\hat \epsilon \\
		= &\;\zeta(\epsilon^k, X^k, y^k, Z^k)\hat \epsilon - \zeta(\epsilon^k + \alpha \Delta \epsilon^k, X^k + \alpha \Delta X^k, y^k + \alpha \Delta y^k, Z^k + \alpha \Delta Z^k)\hat \epsilon \\
		\geq &\; 0,
	\end{aligned}
	\]
	which indeed implies that $(\epsilon^k + \alpha \Delta \epsilon^k, X^k + \alpha \Delta X^k, y^k + \alpha \Delta y^k, Z^k + \alpha \Delta Z^k) \in \cN$. This completes the proof.

\section{Proof Theorem \ref{thm-global-convergence}} \label{proof-global-convergence}
It follows from Lemma \ref{lemma-step-size} and Lemma \ref{lemma-lb-epsilon} that Algorithm \ref{alg-ssnm} is well-defined and generates an infinite sequence containing in $ \cN $. 
	Since the line-search scheme is well-defined, it is obvious that
	\[
	\psi(\epsilon^{k+1}, X^{k+1}, y^{k+1}, Z^{k+1}) < \psi(\epsilon^k, X^k, y^k, Z^k),\quad \forall \; k\geq 0.
	\]
	The monotonically decreasing property of the sequence $\{\psi(\epsilon^k, X^k, y^k, Z^k)\}$ then implies that $\{\zeta_k\}$ is also {monotonically} decreasing. Hence, there exist $\bar \psi$ and $\bar \zeta$ such that
	\[
	\psi(\epsilon^k, X^k, y^k, Z^k) \rightarrow \bar \psi,\quad \zeta_k\rightarrow \bar \zeta,\quad k\rightarrow \infty.
	\]

	Note that $ \bar{\psi} \geq 0 $. We now prove that $ \bar \psi = 0 $ by contradiction. To this end, we assume that $\bar \psi > 0$, hence, $\bar \zeta > 0$. By the fact that the sequence $ \{(\epsilon^k, X^k, y^k, Z^k)\} $ is contained in $\mathcal{N}$, we see that there exists an $\epsilon' > 0$ such that $\epsilon^k > \epsilon' > 0$ for all $k\geq 0$. Also note that $\epsilon^k \leq \psi(\epsilon^k, X^k, y^k, Z^k)^{1/2} \leq \psi(\epsilon^0, X^0, y^0, Z^0)^{1/2}$. Therefore, there exists an $\epsilon'' > \epsilon'$ such that $\epsilon^k \in[\epsilon', \epsilon'']$ for all $k\geq 0$. We next claim that $\{( X^k, y^k, Z^k)\}$ is bounded. Let 
	\[
	R_1^k:= \mathcal{A}X^k + \kappa_p\epsilon^ky^k - b, \quad R_2^k:=-\mathcal{A}^*y^k - Z^k + C,\quad R_3^k:=(1+\kappa_c\epsilon^k)X^k - \Phi(\epsilon^k, X^k - Z^k),\quad k\geq 0.
	\]
	Since $\{\psi(\epsilon^k, X^k, y^k, Z^k)\}$ is bounded, we have that $\{R_1^k\}$, $\{R_2^k\}$ and $\{R_3^k\}$ are bounded. Using the first two equalities in the above, $Z^k$ can be expressed as follows:
	\[
	Z^k = -\mathcal{A}^*y^k + C - R_2^k 
	%		= -\frac{1}{\kappa_p \epsilon^k}\mathcal{A}^*\left(-\mathcal{A}X^k + b + R_1^k\right) + C - R_2^k 
	= \frac{1}{\kappa_p \epsilon^k}\mathcal{A}^*\mathcal{A}X^k + C^k,
	\]
	where $\left\{C^k := -\frac{1}{\kappa_p\epsilon^k}\mathcal{A}^*(b + R_1^k) + C - R_2^k\right\}$ is bounded and does not depend on $\{X^k\}$. Substituting this expression into the third equality yields that 
	\[
	(1+\kappa_c\epsilon^k)X^k - \Phi\left(\epsilon^k, X^k - \frac{1}{\kappa_p \epsilon^k}\mathcal{A}^*\mathcal{A}X^k - C^k\right) = R_3^k.
	\]
	Recall that $\Phi$ is a globally Lipschitz continuous mapping and $\Phi(0,\cdot) = \Pi_{\mathbb{S}_+^n}(\cdot)$. We see that there exists a bounded sequence $\{R_4^k\}$ such that
	\[
	\Phi\left(\epsilon^k, X^k - \frac{1}{\kappa_p \epsilon^k}\mathcal{A}^*\mathcal{A}X^k - C^k\right) =  \Pi_{\mathbb{S}_+^n}\left( X^k - \frac{1}{\kappa_p \epsilon^k}\mathcal{A}^*\mathcal{A}X^k\right) + R_4^k,\quad \forall k\geq 0.
	\]
	Consequently, it holds that
	\[
	X^k - \Pi_{\mathbb{S}_+^n}\left(\frac{1}{1+\kappa_c\epsilon^k}X^k - \frac{1}{\kappa_p \epsilon^k(1+\kappa_c\epsilon^k)}\mathcal{A}^*\mathcal{A}X^k\right) = R_5^k,
	\]
	where $\{R_5^k:=(1+\kappa_c\epsilon^k)^{-1} (R_3^k - R_4^k)\}$ is also bounded. Notice that $\kappa_p>0,\kappa_c>0$ and for any $\epsilon\in [\epsilon', \epsilon'']$, it holds that
	\[
	\begin{aligned}
		&\; \frac{1}{1+\kappa_c\epsilon}X - \frac{1}{\kappa_p \epsilon(1+\kappa_c\epsilon)}\mathcal{A}^*\mathcal{A}X \\
		= &\; X - \frac{1}{\kappa_p \epsilon''(1+\kappa_c\epsilon'')}\mathcal{A}^*\mathcal{A}X - \frac{\kappa_c\epsilon}{1+\kappa_c\epsilon}X - \left(\frac{1}{\kappa_p \epsilon(1+\kappa_c\epsilon)}-\frac{1}{\kappa_p \epsilon''(1+\kappa_c\epsilon'')}\right)\mathcal{A}\mathcal{A}^*X. 
	\end{aligned}
	\]
	One can derive from \cite[Theorems 2.8 \& 4.4]{sun2001solving} that, for any positive constant $c$ the set 
	\[
	\left\{X\in\mathbb{S}^n\;:\; \norm{X - \Pi_{\mathbb{S}_+^n}\left(\frac{1}{1+\kappa_c\epsilon}X - \frac{1}{\kappa_p \epsilon(1+\kappa_c\epsilon)}\mathcal{A}^*\mathcal{A}X\right)}\leq c, \epsilon\in [\epsilon', \epsilon'']\right\}
	\]
	is bounded. Since there exists a sufficiently large $c> 0$ such that the sequence $\{X^k\}$ remains within the associated bounded set, we conclude that the sequence $\{X^k\}$ is bounded, which in turn implies that $\{(\epsilon^k, X^k, y^k, Z^k)\}$ is also bounded. In this case, an accumulation point must exist. 
	
	Let $(\bar \epsilon, \bar X, \bar y, \bar Z)$ be any accumulation point (if it exists) of $ \{(\epsilon^k, X^k, y^k, Z^k)\} $. By taking a subsequence if necessary, we may assume that $\{(\epsilon^k, X^k, y^k, Z^k)\}$ converges to $(\bar \epsilon, \bar X, \bar y, \bar Z)$. Then, by the continuity of $\psi(\cdot)$, it holds that
	\[
	\bar\psi = \psi(\bar \epsilon, \bar X, \bar y, \bar Z), \quad \bar \zeta = \zeta(\bar \epsilon, \bar X, \bar y, \bar Z),\quad (\bar \epsilon, \bar X, \bar y, \bar Z) \in \cN.
	\]
%	{Note that $ \bar{\psi} \geq 0 $, we now prove that $ \bar \psi = 0 $ by contradiction. To this end, we assume that $\bar \psi > 0$. Then $\bar \zeta > 0$.} 
	By the fact that $ (\bar \epsilon, \bar X, \bar y, \bar Z) \in \cN $, we see that $\bar \epsilon \in \R_{++}$. Therefore, by Lemma \ref{lemma-nonsingular}, we see that there {exists} a neighborhood of $(\bar \epsilon, \bar X, \bar y, \bar Z)$, denoted by $\cU$, such that $\hcE'(\epsilon, X, y, Z)$ is nonsingular for any $(\epsilon, X, y, Z) \in \cU$ with $\epsilon > 0$. {Note that for $ k $ sufficiently large, we have that $ (\epsilon^k, X^k, y^k, Z^k) $ belongs to $\cU$ with $\epsilon^k > 0$. Furthermore, by Lemma \ref{lemma-step-size}, there also exists $\bar\alpha \in (0, 1]$ such that
	%$ (\epsilon^k, X^k, y^k, Z^k)\in \cU $ and
	%for any $(\epsilon,X, y, Z)\in \cU$ (shrinking $\cU$ if necessary), it holds that
	for any $\alpha\in (0, \bar \alpha]$,
	\[
	\psi(\epsilon^k + \alpha \Delta \epsilon^k, X^k + \alpha \Delta X^k, y^k + \alpha \Delta y^k, Z^k + \alpha \Delta Z^k) \leq [1 - 2\sigma(1-\delta)\alpha]\psi(\epsilon^k, X^k, y^k, Z^k)\,
	\]
	for $k\geq 0$ sufficiently large. The existence of the fixed number $\bar \alpha\in (0,1]$ further indicates that} there exists a nonnegative integer $\ell$ such that $\rho^\ell\in (0, \bar \alpha]$ and  $\rho^{\ell_k} \geq \rho^\ell$ for all $k$ sufficiently large. Therefore, it holds that
	\[
		\psi(\epsilon^{k+1}, X^{k+1}, y^{k+1}, Z^{k+1}) \leq [1-2\sigma(1-\delta)\rho^{\ell_k}] \psi(\epsilon^k, X^k, y^k, Z^k) \leq [1-2\sigma(1-\delta)\rho^{\ell}] \psi(\epsilon^k, X^k, y^k, Z^k),
	\]
	for all sufficiently large $k$. The above inequality implies that $\bar \psi \leq 0$, which is a contradiction {to the assumption $\bar\phi > 0$}. Hence, $\bar\psi = 0$, which implies that $\hcE(\bar \epsilon, \bar X, \bar y, \bar Z) = 0$. 
	
	Last, we assume that the solution set to the KKT system is nonempty and bounded. One can check that $\hcE(\cdot)$ is a weakly univalent function \cite{gowda1999weak}. Then, from \cite{ravindran2001regularization}, one can verify that $\hcE^{-1}(0)$ is also nonempty and bounded, which further implies the boundedness of the sequence $\{(\epsilon^k, X^k, y^k, Z^k)\}$. Thus, the proof is completed. 

\section{Proof of Lemma \ref{lemma-delta-ineq}} \label{proof-delta-ineq}
Suppose that $ \bar W $ has the spectral decomposition \eqref{eq-eig-W}. Denote $ \Delta \tilde X :=  P^T\Delta X P$ and $ \Delta \tilde Z = P^T\Delta Z P$. For the index set $ \beta $, define $ \Phi_{\abs{\beta}} $ as before. Then, by Lemma \ref{lemma-partial-B-Phi}, there exists $ V_{\abs{\beta}}\in \partial \Phi_{\abs{\beta}}(0,0) $ such that
	\[
		V(0, \Delta X - \Delta Z) = P
		\begin{pmatrix}
			\Delta \tilde H_{\alpha\alpha} & \Delta \tilde H_{\alpha\beta} & [\Omega_0(d)]_{\alpha\gamma}\circ \Delta \tilde H_{\alpha\gamma} \\
			\Delta \tilde H_{\alpha\beta}^T & V_{\abs{\beta}}(0, \Delta \tilde H_{\beta\beta}) & 0 \\
			\Delta \tilde H_{\alpha\gamma}^T\circ [\Omega_0(d)]_{\alpha\gamma}^T & 0 & 0
		\end{pmatrix} P^T,
	\]
	where $ \Delta \tilde H = \Delta \tilde X - \Delta \tilde Z $. Comparing both sides of the relation $ \Delta X = V(0, \Delta X - \Delta Z) $ yields that
	\[
		\Delta \tilde Z_{\alpha\alpha} = 0, \quad \Delta \tilde Z_{\alpha\beta}  = 0,\quad \Delta \tilde X_{\beta\gamma} = 0,\quad \Delta \tilde X_{\gamma\gamma} = 0,
	\]
	and that
	\[
		\Delta \tilde X_{\beta\beta} = V_{\abs{\beta}}(0,\Delta \tilde X_{\beta\beta} - \Delta\tilde Z_{\beta\beta}),\; \Delta \tilde X_{\alpha\gamma} - [\Omega_0(d)]_{\alpha\gamma}\circ \Delta \tilde X_{\alpha\gamma} = [\Omega_0(d)]_{\alpha\gamma}\circ \Delta \tilde Z_{\alpha\gamma}.
	\]
	By Lemma \ref{lemma-V-PSD}, it holds that
	\[
		\inprod{\Delta \tilde X_{\beta\beta}}{-\Delta \tilde Z_{\beta\beta}} = \inprod{V_{\abs{\beta}}(0,\Delta \tilde X_{\beta\beta} - \Delta\tilde Z_{\beta\beta})}{\left(\Delta \tilde X_{\beta\beta} - \Delta\tilde Z_{\beta\beta}\right) - V_{\abs{\beta}}(0,\Delta \tilde X_{\beta\beta} - \Delta\tilde Z_{\beta\beta})}\geq 0
	\]
	which implies that
	\[
		\inprod{\Delta X}{\Delta Z} =  \inprod{\tilde \Delta X}{\tilde \Delta Z} = \inprod{\Delta \tilde X_{\beta\beta}}{\Delta \tilde Z_{\beta\beta}} + 2\inprod{\Delta \tilde X_{\alpha\gamma}}{\Delta \tilde Z_{\alpha\gamma}}
			 \leq  2\inprod{\Delta \tilde X_{\alpha\gamma}}{\Delta \tilde Z_{\alpha\gamma}}.
	\]
	However, simple {calculations show} that $ \Gamma_{\bar X}(-\bar Z, \Delta X) =  2\inprod{\Delta \tilde X_{\alpha\gamma}}{\Delta \tilde Z_{\alpha\gamma}}$. This proves the lemma. 

\section{Proof of Proposition \ref{prop-nonsingular-cE}} \label{proof-nonsingular-cE}
It is obvious that part 3 implies part 2. From \eqref{eq-V0-VH}, we see that for any $V_0\in \partial_B\Pi_{\S_+^n}(W)$, $W\in \S^n$, there exists $V\in \partial_B \Phi(0, W)$ such that
	\[
	V_0(H) = V(0, H),\quad \forall H\in \S^n.
	\]
	Therefore, by Theorem \ref{thm-pd-nonsingular}, part 2 implies part 1. So, it suffices to show that part 1 implies part 3.
	
	Now, suppose that part 1 holds. {Recall that} the primal constraint nondegeneracy condition at $\bar X$ implies that $\cM(\bar X) = \left\{(\bar y, \bar Z)\right\}$, {i.e., the dual multiplier is unique}. Now, by using Lemma \ref{lemma-sosc-dual-non}, we can see that the dual constraint nondegeneracy at $(\bar y, \bar Z)$ implies that the strong {second-order sufficient condition} holds at $\bar X$. In particular, it holds that
	\begin{equation}\label{eq-thm-pd-nonsingular-1}
		-\Gamma_{\bar X}(-\bar Z, H) > 0,\quad \forall 0\neq H \in \mathrm{app}(\bar y, \bar Z).
	\end{equation}
	
	Let {$U\in \partial \hcE(\bar\epsilon, \bar X, \bar y, \bar Z)$} (recall that $ \bar \epsilon = 0 $). Then there exists {$V\in \partial \Phi(0, \bar X - \bar Z)$} such that
	\[
	U(\Delta \epsilon, \Delta X, \Delta y, \Delta Z) =
	\begin{pmatrix}
		\Delta \epsilon \\
		\cA \Delta X \\
		-\cA^*\Delta y - \Delta Z \\
		\Delta X - V(\Delta \epsilon, \Delta X - \Delta Z)
	\end{pmatrix},\quad (\Delta \epsilon, \Delta X, \Delta y, \Delta Z)\in \R\times \X.
	\]
	To show that $U$ is nonsingular, it suffices to show that $U(\Delta \epsilon, \Delta X, \Delta y, \Delta Z) = 0$ implies that $(\Delta \epsilon, \Delta X, \Delta y, \Delta Z) = 0$. So, let us assume that $U(\Delta \epsilon, \Delta X, \Delta y, \Delta Z) = 0$, i.e.,
	\begin{equation}\label{eq-thm-pd-nonsingular-2}
		\begin{pmatrix}
			\Delta \epsilon \\
			\cA \Delta X \\
			-\cA^*\Delta y - \Delta Z \\
			\Delta X - V(\Delta \epsilon, \Delta X - \Delta Z)
		\end{pmatrix} = 0.
	\end{equation}
	
	We first prove that $\Delta X = 0$ by contradiction. To this end,  we assume that $\Delta X \neq 0$. By Lemma \ref{lemma-partial-B-Phi} and \eqref{eq-thm-pd-nonsingular-2}, one can verify that
	\begin{equation}\label{eq-thm-pd-nonsingular-3}
		\Delta X \in \mathrm{app}(\bar y, \bar Z).
	\end{equation}
	Then by \eqref{eq-thm-pd-nonsingular-1}, \eqref{eq-thm-pd-nonsingular-3} implies that
	\begin{equation}\label{eq-thm-pd-nonsingular-4}
		-\Gamma_{\bar X}(-\bar Z, \Delta X) > 0.
	\end{equation}
	On the other hand, by \eqref{eq-thm-pd-nonsingular-2}, it holds that $\inprod{\Delta X}{\Delta Z} = \inprod{\Delta X}{-\cA^*\Delta y} = \inprod{\cA\Delta X}{-\Delta y} = 0$, which together with Lemma \ref{lemma-delta-ineq}, yields that
	\begin{equation}\label{eq-thm-pd-nonsingular-5}
		\Gamma_{\bar X}(-\bar Z, \Delta X) \geq \inprod{\Delta X}{\Delta Z} = 0.
	\end{equation}
	However, \eqref{eq-thm-pd-nonsingular-5} contradicts to \eqref{eq-thm-pd-nonsingular-4}. Thus, $\Delta X = 0$ and $V(0, -\Delta Z) = 0$.
	
	We next prove $\Delta y = 0$ and $\Delta Z = 0$. From Lemma \ref{lemma-partial-B-Phi}, $ V(0, -\Delta Z) = 0 $ implies that that
	\[
	P_\alpha^T\Delta Z P_\alpha  = 0,\quad P_\alpha^T\Delta Z P_\beta = 0, \quad P_\alpha^T\Delta Z P_\gamma = 0.
	\]
	On the other hand, from \eqref{eq-thm-pd-nonsingular-2}, we get $\cA^*\Delta y + \Delta Z = 0$. Since the primal constraint {nondegeneracy} condition \eqref{eq-def-primal-non} holds at $\bar X$, there exist $X\in \S^n$ and $Z\in \mathrm{lin}(\cT_{\S^n_+}(\bar X))$ such that $\cA X = \Delta y$, and $ X + Z = \Delta Z$. As a consequence, it holds that
	\[
	\begin{aligned}
		\hspace{-4mm}
		 \inprod{\Delta y}{\Delta y} + \inprod{\Delta Z}{\Delta Z} \;= & \; \inprod{\cA X}{\Delta y} + \inprod{X+Z}{\Delta Z} = \inprod{\cA X}{\Delta y} + \inprod{X}{-\cA^*(\Delta y)} + \inprod{Z}{\Delta Z} \\
		= &\; \inprod{Z}{\Delta Z}
		\;= \; \inprod{P^TZP}{P^T\Delta Z P}
		\;= \; 0,
	\end{aligned}
	\]
	where we have used the {fact} that
	$
	P_\alpha^T\Delta Z P_\alpha  = 0,\; P_\alpha^T\Delta Z P_\beta = 0, \; P_\alpha^T\Delta Z P_\gamma = 0,\;Z\in \mathrm{lin}(\cT_{\S^n_+}(\bar X)).
	$
	Thus, $\Delta y = 0$, $\Delta Z = 0$ and hence $U$ is nonsingular. Thus, the proof is completed.
\section{Proof of Theorem \ref{thm-superlinear-convergence}} \label{proof-superlinear-convergence}
By Theorem \ref{thm-global-convergence}, we see that $\hcE(\bar \epsilon, \bar X, \bar y, \bar Z) = 0$ and in particular, $\bar \epsilon = 0$. Then by Proposition \ref{prop-nonsingular-cE}, the primal and dual constraint nondegeneracy conditions imply that every element of $\partial \hcE(0, \bar X, \bar y, \bar Z)$ (hence, of $\partial_B \hcE(0, \bar X, \bar y, \bar Z)$) is nonsingular. As a consequence (see e.g., \cite[Proposition 3.1]{qi1993nonsmooth}), for all $k$ sufficiently large, it holds that
	\begin{equation}
	\label{eq-thm-inv-O1}
	\norm{\hcE'(\epsilon^k, X^k, y^k, Z^k)^{-1}} = O(1).	
	\end{equation}
	For simplicity, in this proof, we denote  $\bar w:= (\bar \epsilon, \bar X, \bar y, \bar Z)^T$, $w^k:= (\epsilon^k, X^k, y^k, Z^k)^T$, $\Delta w^k:= (\Delta \epsilon^k, \Delta X^k, \Delta y^k, \Delta Z^k)^T$,  $\hcE_k:= \hcE(\epsilon^k, X^k, y^k, Z^k)$, $\hcE_*:= \hcE(\bar \epsilon, \bar X, \bar y, \bar Z)$ and $\cJ_k:= \hcE'(\epsilon^k, X^k, y^k, Z^k)$ for $k\geq 0$. Then, we can verify that 
		\begin{align}\label{eq-thm-local-1}
				 \norm{w^k + \Delta w^k - \bar w} = &\;\norm{w^k + \cJ_k^{-1}\left(\begin{pmatrix}
					\zeta_k\hat \epsilon \\ \cR_k
				\end{pmatrix} - \hcE_k\right) - \bar w}\notag = \norm{-\cJ_k^{-1}\left( \hcE_k - \cJ_k (w^k - \bar w)  - \begin{pmatrix}
					\zeta_k\hat \epsilon \\ \cR_k
			\end{pmatrix} \right)} \notag\\
			= &\; O\left( \norm{\hcE_k - \cJ_k (w^k - \bar w)} \right) + O\left(\norm{\hcE_k}^{1+\tau}\right) + O(\norm{\cR_k}).
		\end{align}
	Since $\hcE$ is locally Lipschitz continuous at $(\bar\epsilon, \bar X, \bar y, \bar Z)$, for all $k$ sufficiently large, it holds that
	\begin{equation}\label{eq-thm-local-2}
		\norm{\hcE_k}=\norm{\hcE_k - \hcE_*} = {O\left( \norm{w^k - \bar w } \right)}.
	\end{equation}
	Moreover, since $\cE'_\epsilon$ is bounded near $(\bar\epsilon, \bar X, \bar y, \bar Z)$, it holds that
	\begin{equation}\label{eq-thm-local-3}
		\begin{aligned}
			\norm{\cR_k} \leq &\; \eta_k \norm{\cE(\epsilon^k, X^k, y^k, Z^k) + \cE_\epsilon'(\epsilon^k, X^k, y^k, Z^k)\Delta \epsilon^k} \\
			\leq &\; O\left(\norm{\hcE_k}^{\tau}\right)\left( \norm{\cE(\epsilon^k, X^k, y^k, Z^k)} + O\left(\abs{\Delta \epsilon^k}\right) \right) \\
			\leq &\; O\left(\norm{\hcE_k}^{\tau}\right)\left( \norm{\cE(\epsilon^k, X^k, y^k, Z^k)} + O\left(\abs{-\epsilon^k + \zeta_k \hat \epsilon}\right) \right) \\
			\leq &\; O\left(\norm{\hcE_k}^{1+\tau}\right) = O\left(\norm{\hcE_k - \hcE_*}^{1+\tau}\right) =  {O\left( \norm{w^k - \bar w}^{1+\tau} \right)}.
		\end{aligned}
	\end{equation}
	Since $\Phi$ is strongly semismooth everywhere {(see Proposition \ref{prop-huber-properties})}, $\hcE$ is also strongly semismooth at $(\bar\epsilon, \bar X, \bar y, \bar Z)$. Thus, for $k$ sufficiently large, it holds that
	\begin{equation*}
		\begin{aligned}
			&\; \norm{\hcE_k - \cJ_k (w^k - \bar w)} =  {O\left( \norm{w^k - \bar w}^2 \right)},
		\end{aligned}
	\end{equation*}
	which together with \eqref{eq-thm-local-1}--\eqref{eq-thm-local-3} implies that
	\begin{equation}
		\label{eq-thm-local-4}
		\norm{w^k + \Delta w^k - \bar w} 
			= {O\left(\norm{w^k - \bar w}^{1+\tau}\right)}.
	\end{equation}
	Now by using the strong semismoothness of $\hcE$ again and \eqref{eq-thm-inv-O1}, we can show that for $k$ sufficiently large,
	\begin{equation}\label{eq-thm-local-5}
		\norm{w^k - \bar w} = O\left(	\norm{\hcE_k} \right).
	\end{equation}
	Combining \eqref{eq-thm-local-4}--\eqref{eq-thm-local-5} and the fact that $\hcE$ (hence $\psi$) is locally Lipschitz continuous at $(\bar\epsilon, \bar X, \bar y, \bar Z)$, we get for $k$ sufficiently large that
	\[
	\begin{aligned}
		&\; \psi(\epsilon^{k} + \Delta \epsilon ^k , X^k + \Delta X^k,  y^{k} +
		\Delta y^k, Z^{k} + \Delta Z^k) \\
		= &\; \norm{\hcE(\epsilon^{k} + \Delta \epsilon ^k , X^k + \Delta X^k, y^{k} +
			\Delta y^k, W^{k} + \Delta W^k) - \hcE_*}^2\\
		= &\;  O\left( 	\norm{w^k + \Delta w^k - \bar w}^2\right) = O\left(\norm{w^k - \bar w}^{2(1+\tau)}\right)  = O\left(\norm{\hcE_k}^{2(1+\tau)}\right) \\
		= &\;  O\left(\norm{\psi(\epsilon^k, X^k, y^k, Z^k)}^{1+\tau} \right) = o\left(\norm{\psi(\epsilon^k, X^k, y^k, Z^k)}\right).
	\end{aligned}
	\]
	This shows that, for $k$ sufficiently large, $w^{k+1} = w^k + \Delta w^k$, 
%	\[
%	(\epsilon^{k+1}, X^{k+1}, y^{k+1}, Z^{k+1}) = (\epsilon^{k} + \Delta \epsilon ^k , X^k + \Delta X^k,  y^{k} + \Delta y^k, Z^{k} + \Delta Z^k),
%	\]
	i.e., the unit step size is eventually {accepted.} Therefore, the proof is completed. 

%% references
\bibliographystyle{plain}
\bibliography{references.bib}

\end{document}